\documentclass[red,11pt,a4paper]{article}

\usepackage{amsmath,amssymb,amsthm,color}
\oddsidemargin=0pt
\evensidemargin=0pt
\topmargin=-7mm
\headsep=18pt
\textheight=230mm
\textwidth=159.3mm
\pagestyle{plain}
\theoremstyle{definition}
 \newtheorem{dfn}{Definition}
 \newtheorem{remark}[dfn]{Remark}

\theoremstyle{plain}
 \newtheorem{thm}[dfn]{Theorem}
 \newtheorem{prop}[dfn]{Proposition}
 \newtheorem{lem}[dfn]{Lemma}
 \newtheorem{cor}[dfn]{Corollary}

\numberwithin{equation}{section}
\newcommand{\bn}{{\bold n}}
\newcommand{\bu}{{\bold u}}

\newcommand{\bv}{{\bold v}}
\newcommand{\bw}{{\bold w}}
\newcommand{\ba}{{\bold a}}

\newcommand{\bff}{{\bold f}}
\newcommand{\bA}{{\bold A}}

\newcommand{\bD}{{\bold D}}

\newcommand{\bF}{{\bold F}}
\newcommand{\bG}{{\bold G}}

\newcommand{\bI}{{\bold I}}

\newcommand{\bR}{{\bold R}}
\newcommand{\bV}{{\bold V}}

\newcommand{\bU}{{\bold U}}

\newcommand{\dv}{{\rm div}\,}

\newcommand{\BR}{{\Bbb R}}
\newcommand{\BC}{{\Bbb C}}

\newcommand{\BN}{{\Bbb N}}

\newcommand{\BE}{{\Bbb E}}

\newcommand{\CA}{{\mathcal A}}
\newcommand{\CB}{{\mathcal B}}

\newcommand{\CD}{{\mathcal D}}

\newcommand{\CF}{{\mathcal F}}

\newcommand{\CL}{{\mathcal L}}

\newcommand{\CR}{{\mathcal R}}
\newcommand{\CS}{{\mathcal S}}
\newcommand{\CT}{{\mathcal T}}
\newcommand{\CH}{{\mathcal H}}

\newcommand{\CU}{{\mathcal U}}
\newcommand{\CV}{{\mathcal V}}

\newcommand{\CX}{{\mathcal X}}
\newcommand{\CY}{{\mathcal Y}}

\newcommand{\bg}{{\bold g}}
\newcommand{\bh}{{\bold h}}

\newcommand{\pd}{\partial}

\newcommand{\pf}{{\bf Proof}.~}

\newcommand{\R}{\mathbb{R}}
\newcommand{\de}{\partial}


\newcommand{\eq}[1]{\begin{equation}
\begin{split}
#1
\end{split}
\end{equation}}

\newcommand{\lr}[1]{\left( #1 \right)}

\begin{document}

\title{On the maximal $L_p$-$L_q$  regularity of solutions\\
to  a general linear parabolic system}
\author{Tomasz PIASECKI
\thanks{Corresponding Author, Institute of Applied Mathematics and Mechanics, 
University of Warsaw, Banacha 2, 02-097 Warsaw, Poland. 
E-mail address: tpiasecki@mimuw.edu.pl.
Supported by the Top Global University Project and the Polish National Science Centre grant 2018/29/B/ST1/00339.},
\quad 
Yoshihiro SHIBATA
\thanks{Department of Mathematics and Research Institute of 
Science and Engineering, 
Waseda University, 
Ohkubo 3-4-1, 
Shinjuku-ku, Tokyo 169-8555, Japan. Adjunct faculty member in the Department of Mechanical Engineering and Materias Science, University of Pittsburgh.
E-mail address: yshibata@waseda.jp.
Partially supported by  
JSPS Grant-in-aid for Scientific Research (A) 17H0109
and Top Global University Project.},
\enskip 
 and \enskip Ewelina ZATORSKA
\thanks{Department of Mathematics University 
College London Gower Street London WC1E 6BT, UK.
E-mail address: e.zatorska@ucl.ac.uk.
Supported by the Top Global University Project and the Polish Government MNiSW research grant 2016-2019 "Iuventus Plus"  No.  0888/IP3/2016/74.}}

\date{}

\maketitle

\begin{center}
{\bf Abstract}
\end{center}

\vskip5mm

\noindent We show the existence of solution in the maximal $L_p-L_q$ regularity 
framework to a class of symmetric parabolic problems on a uniformly $C^2$ domain in $\R^n$. Our approach consist 
in showing $\CR$ - boundedness of families of solution operators to corresponding resolvent problems 
first in the whole space, then in half-space, perturbed half-space and finally, using localization arguments, 
on the domain. 
{Assuming additionally boudedness of the domain we also show exponential decay of the solution.}
In particular, our approach does not require assuming a priori the uniform Lopatinskii - Shapiro condition.  \\

\noindent{MSC Classification:} 35K40, 35K51\\
\noindent{Keywords:} {\em linear parabolic system, maximal regularity, $\CR$-boundedness}

\section{Introduction}
In this paper we consider the   following initial-boundary value problem:
\begin{equation}\label{1.1}\left\{
\begin{aligned}
\sum_{\ell=1}^n R_{k\ell}(x)\pd_t u_\ell(x,t)
-\dv\lr{\sum_{\ell=1}^n B_{k\ell}(x)\nabla u_\ell(x,t)} & = F_k(x,t)
&\quad&\text{in $\Omega\times(0, T)$}, \\
\sum_{\ell=1}^n B_{k\ell}(x)\nabla u_\ell(x,t) \cdot \bn(x) & = G_k(x,t)
&\quad&\text{on $\Gamma \times (0, T)$}, \\
u_k|_{t=0} (x)& = u_{0k}(x)
&\quad&\text{in $\Omega$},
\end{aligned}\right.
\end{equation}
where $n$ is an arbitrary large natural number, $k\in\{1, \ldots, n\}$,  $\Omega$  a uniformly $C^2$ domain in 
$\BR^N$ ($N \geq 2$), $\Gamma$ is the boundary of $\Omega$,  $\bn$ is the unit outer normal vector to $\Gamma$,
$x=(x_1, \ldots, x_N)$ is a point of
 $\Omega$, and $t\in(0,T)$ is a time variable.

The $n$-vector of unknown functions is denoted by $\bu= (u_1, \ldots, u_n)^\top$ where $(\cdot)^\top$ denotes the transposed $(\cdot)$. Similarly, 
$\bF = (F_1, \ldots, F_n)^\top$, $\bG = (G_1, \ldots, G_n)^\top$,
and $\bu_0 = (u_{01}, \ldots, u_{0n})^\top$ denote given $n$-vectors of functions prescribing the right hand side of the equations, the boundary and the initial conditions, respectively.

The $n\times n$ matrices $B=[B_{k\ell}(x)]$ and $R=[R_{k\ell}(x)]$ are given and we assume that all their components 
$B_{k\ell}(x)$ and $R_{k\ell}(x)$ are uniformly H\"older continuous functions of order $\sigma>0$ and that 
$\nabla B_{k\ell}$ and $\nabla R_{k\ell}$ are integrable with some exponent $r \in (N, \infty)$, i.e. we have
\begin{equation}\label{1.2}\begin{aligned}
&|B(x)|, |R(x)| \leq M_0
\quad \text{for any $x \in \Omega$}, \quad
\|\nabla(B, R)\|_{L_r(\Omega)} \leq M_0, \\
 &|B(x) - B(y)|\leq M_0|x-y|^\sigma,
\quad
|R(x) - R(y)|\leq M_0|x-y|^\sigma 
\quad\text{for any $x, y \in \Omega$}.
\end{aligned}
\end{equation}
for some positive constant $M_0$.

Moreover, we assume that the matrices
$B$ and $R$ are positive and symmetric, and that 
there exists  constant $m_1 > 0$ for which
\begin{equation}\label{1.3}
\langle B(x)\bv, \overline{\bv} \rangle \geq m_1|\bv|^2,
\quad 
\langle R(x)\bv, \overline{\bv} \rangle \geq m_1|\bv|^2
\end{equation}
for any complex $n$-vector $\bv$ and any $x \in \Omega$. Here and in the following, $\overline{\bv}$ denotes the 
complex conjugate of $\bv$ and $\langle\cdot, \cdot\rangle$ denotes the standard inner product in $\BR^n$.

In the rest of this paper we will rather use the following more compact matrix formulation of the system  \eqref{1.1}:
\begin{equation}\label{1.1*}
R\pd_t\bu - \dv(B\nabla\bu) = \bF\quad \text{in $\Omega\times(0, T)$}, 
\quad
B(\nabla\bu\cdot\bn)=\bG \quad \text{on $\Gamma\times(0, T)$}, 
\end{equation}
subject to the initial condition: $\bu|_{t=0} = \bu_0$ in $\Omega$, where we follow the convention:
%
$$
\nabla\bu=[\de_1 \bu,\ldots,\de_n \bu], \quad
\nabla\bu\cdot\bn = \sum_{j=1}^N n^j\pd_j\bu,
$$
and divergence of a $n\times n$ matrix $A$ is understood as a vector 
$$
\dv A = [\dv[A]_{1,\cdot},\ldots,\dv[A]_{n,\cdot}]^\top,
$$  
where $[A]_{k,\cdot}$ is the k-th row of $A$, $\bn = (n_1, \ldots, n_N)^\top$, $\nabla u_\ell = (\pd_1 u_\ell, \ldots, \pd_N u_\ell)^\top$, 
$\pd_i = \pd/\pd x_i$.

\bigskip

The issue of maximal regularity for linear parabolic problems is nowadays well investigated area. The development of the theory dates back to papers  of Lopatinskii \cite{Lop} and Shapiro \cite{Sha}  from the early fifties, where certain algebraic condition was introduced that guarantees the well posedness for a class of parabolic problems. This condition, referred to as Lopatinskii-Shapiro condition (LS), corresponds to uniform, with respect to the parameter, solvability of the family of elliptic problems on a half space. The LS condition 
has been ever since assumed in many well-posedness results for parabolic problems as it provides resolvent estimates allowing to show maximal regularity for corresponding parabolic problems.  The earliest results concerning the resolvent estimates for elliptic operators satisfying this condition have been shown by Agmon \cite{Agm}, and by Arganovich and Vishik \cite{AV}.

As far as the Cauchy problems are concerned, the maximal regularity in $L_p(X)$, where $X$ is a  Banach space with the Unconditional Martingale Difference property (UMD property) has been shown  by Da Prato and Gisvard \cite{DPG},  Dore and Venni \cite{DorV}, and Pr\"uss \cite{Pruss90} and Giga and Sohr \cite{GS}, among others. For a summary of these results we refer the reader to the monograph of Amman \cite[Theorem 4.10.7]{Amann95}.  One should also mention a different approach based on potential theory applied by Ladyzhenskaya, Solonnikov and Uraltseva in \cite{LSU} to prove the maximal regularity in $L_p((0,T),L_p(G))$ for $G$ bounded and $1<p<\infty$.  

The concept of $\CR$-sectorial operators and operator-valued Fourier multipliers, essential from the point of view of the present paper, originates from the work of Weis \cite{Weis}. In this paper a characterization of the class of operators with maximal regularity was given in terms of $\CR$-boundedness of family of associated resolvent operators. This approach has been applied for the first time to show maximal $L_p$ regularity for the Cauchy problem by Kalton and Weis in \cite{KW}.     
 Further results in this spirit have been shown by Denk, Hieber and Pr\"uss in \cite{DHP}. 
In particular, Theorem 8.2 from this work concerns the maximal $L_p$-regularity for a class of parabolic initial-boundary problems. We also recommend it as a collection of auxiliary results and for  extensive list of references on the subject.

The above overview is obviously far from complete, but it should be emphasized that all above mentioned results assume a certain version of LS condition. However, for some problems this condition could be rather difficult to check. A classical way around this obstacle consist in applying energy estimates to show the existence of weak solutions and regularizing it using a priori estimates in the maximal regularity setting, see for example \cite{MZ1}, \cite{MZ2}.
Another way to solve the problem directly, without assuming the LS condition, consists in solving the problem first on the whole space, then on a half-space, further its perturbation and finally, with a standard localization procedure, on a domain. This idea has been used, for example, in the work of Enomoto and Shibata \cite{ES1}, where the maximal $L_p-L_q$ regularity of solutions was proven first for the Stokes operator and then for the compressible Navier-Stokes equations. This has been then extended in \cite{ESB} to the case of some free boundary problem.  Our strategy relies very much on the technique developed in these two papers. Let us also mention that a similar idea  in critical regularity Besov space framework has been developed in \cite{DM1}, \cite{DM2}, \cite{DZ}.

All of above mentioned results  deal with a single equation or a system of two-three equations. The main contribution of our paper is that it provides the maximal $L_p-L_q$ regularity result for arbitrary large and more general system without the LS condition. 

Symmetric parabolic systems of type \eqref{1.1} arise in particular in mathematical description of multicomponent systems with complex diffusion.  
{Equations  \eqref{1.1} can be regarded as linearization of complex systems  that model, for example,} the motion of multicomponent mixture, transport of ions, or the evolution of densities of interacting species. Although in above described models the original problem is often  non-symmetric and only positive semidefinite, it reveals entropy structure which allows to rewrite the problem in the so-called entropic variables and to reduce the problem by one equation. The resulting system is then symmetric and it is reasonable to assume or even in certain cases it is possible to show that the system is strictly parabolic. An overview of such models together with a self contained description of entropy-based approach is presented in monograph \cite{Jungel}. 
In this context the present result has been already used in a very recent work of the authors \cite{PSZ2}, where we proved the existence and maximal regularity of solutions to the Navier-Stokes type of system of $(n+1)$- component mixture. 
{
We used the main result of this paper, Theorem \ref{thm:1.1}, to generate stability and maximal $L_p-L_q$ regularity 
result for linearization of the species subsystem. 
In particular, as we are interested in short time existence, linearizing around the initial conditions we obtain 
time-indepentent coefficients.} 
Earlier,  in \cite{PSZ1} we also considered a simplified version of this system modelling the two component compressible mixture.  In that case the linearized system was reduced to a single equation, and therefore much more straightforward to deal with. 
Up to our knowledge, the only other result for such type of systems, is due to Herberg, Meyries, Pr\"uss and Wilke \cite{HMPW}, and it is restricted to the incompressible, isothermal and isobaric multicomponent flows. Rather than eliminating one equation from the system of reaction-diffusion equations and symmetrizing it using the entropy normal form, the authors work with the whole system of $(n+1)$ equations.  Its principal part is only normally elliptic on the space $\BE=\{{\rm e}\}^\top$, where ${\rm e}$ is a $(n+1)$-vector of all entries equal to 1. However, it allows for verification of the LS condition at the linear level, which we do not require here.

\subsection{Preliminaries} 
Here we recall some definitions and auxiliary results which are used in the paper. 
\begin{dfn}\label{dfn:1.2}
 We say that $\Omega$ is a uniform $C^2$ domain,
if there exist positive constants $K$, $L_1$, and $L_2$ such 
that the following assertion holds:  For any $x_0 = (x_{01},
\ldots, x_{0N}) \in \Gamma$ there exist a coordinate number
$j$ and a $C^2$ function $h(x')$ defined on $B'_{L_1}(x_0')$ such that 
$\|h\|_{H^k_\infty(B'_{a_1}(x_0'))} \leq K$ and
\begin{align*}
\Omega\cap B_{L_2}(x_0) & = \{x \in \BR^N \mid x_j > h(x') \enskip
(x' \in B'_{L_1}(x_0')) \} \cap B_{L_2}(x_0), \\
\Gamma \cap B_{L_2}(x_0) & = \{x \in \BR^N \mid x_j = h(x') \enskip
(x' \in B_{L_1}'(x_0')) \} \cap B_{L_2}(x_0).
\end{align*}
Here, we have set 
\begin{gather*}
y' = (y_1, \ldots, y_{j-1}, y_{j+1}, \ldots, y_N) \enskip (y \in \{x, x_0\}),
 \\
B'_{L_1}(x'_0)  = \{x' \in \BR^{N-1} \mid |x' - x'_0| < L_1\}, \\
B_{L_2}(x_0) = \{x \in \BR^N \mid |x-x_0| < L_2\}.
\end{gather*}
\end{dfn}
Let us also recall the definition of the Fourier transform and its inverse 
\begin{equation} \label{ft} 
\CF[f](\xi) = \int_{\BR^N} e^{-ix\cdot\xi}
f(x)\,dx,\quad
\CF_{\xi}^{-1}[g](x) = \frac{1}{(2\pi)^N}\int_{\BR^N}
e^{i\xi\cdot x} g(\xi)\,d\xi.
\end{equation}
Analogously we introduce the partial 
Fourier transform $\CF_{x'}$ and its inverse transform $\CF_{\xi'}^{-1}$ by setting 
\eq{ \label{pft}
\CF_{x'}[f](\xi', x_N) = \int_{\BR^{N-1}} e^{-ix'\cdot\xi'}
f(x', x_N)\,dx',\\
\CF_{\xi'}^{-1}[g](x) = \frac{1}{(2\pi)^{N-1}}\int_{\BR^{N-1}}
e^{i\xi'\cdot x'} g(\xi', x_N)\,d\xi',
}
where $x' = (x_1, \ldots, x_{N-1})$ and $\xi' = (\xi_1, \ldots,\xi_{N-1})$.
Next, we recall the definition of $\CR$ boundedness of a family of operators
\begin{dfn}\label{dfn:1.1}
Let $X$ and $Y$ be two Banach spaces. A family of 
operators $\CT\subset \CL(X, Y)$ is called $\CR$ bounded
on $\CL(X, Y)$ , if there exist constants 
$C>0$ and $p \in [1, \infty)$ such that for each
$m \in \BN$, $\{f_j\}_{j=1}^m \subset X^m$, and 
$\{T_j\}_{j=1}^m \subset \CT^m$, we have
$$\|\sum_{k=1}^mr_kT_kf_k\|_{L_p((0, 1), Y)}
\leq C\|\sum_{k=1}^mr_kf_k\|_{L_p((0, 1), X)}.
$$
Here, $\CL(X, Y)$ denotes the set of all bounded linear 
functions from $X$ into $Y$ and the Rademacher functions
$r_k$, $k \in \BN$, are given by
$r_k : [0, 1] \to \{-1, 1\}$; 
$t \mapsto {\rm sign}(\sin 2^k\pi t)$.  The smallest such
$C$ is called $\CR$ bound of $\CT$ on $\CL(X, Y)$,
which is denoted by $\CR_{\CL(X, Y)}\CT$.
\end{dfn} 
Finally we recall
\begin{dfn}
For any Banach space $X$,
$H^{1/2}_p(\BR, X)$ denotes the set of all $X$ valued
Bessel potential functions, $f$,  satisfying
\begin{equation}\label{1.6*}
\|f\|_{H^{1/2}_p(\BR, X)}
= \Bigl(\int_\BR \|\CF^{-1}[(1+\tau^2)^{1/4}\CF[f](\tau)]
\|^p\,{\rm d}\tau\Bigr)^{1/p} < \infty,
\end{equation}
where $\CF$ and $\CF^{-1}$ denote the Fourier transform
and the inverse Fourier transform, respectively.
\end{dfn}
To end this subsection, we introduce some fundamental properties of
$\CR$-bounded operators and Bourgain's results concerning
 Fourier multiplier theorems with scalar multiplier.
(see, e.g., \cite[Remarks 3.2 and Proposition 3.4]{DHP} 
and  \cite{bourgain86}).
\begin{prop} \label{prop:4.1}
a) Let $X$ and $Y$ be Banach spaces,
and let $\CT$ and $\CS$ be
$\CR$-bounded families in $\CL(X,Y)$.
Then, $\CT+ \CS = \{T + S \mid
T \in \CT, \enskip S \in \CS\}$ is also an $\CR$-bounded
family in
$\CL(X,Y)$ and
$$\CR_{\CL(X,Y)}(\CT + \CS) \leq
\CR_{\CL(X,Y)}(\CT) + \CR_{\CL(X,Y)}(\CS).$$

b) Let $X$, $Y$ and $Z$ be Banach spaces,  and let
$\CT$ and $\CS$ be $\CR$-bounded families in $\CL(X, Y)$ and
$\CL(Y, Z)$, respectively.  Then, $\CS\CT = \{ST \mid
T \in \CT, \enskip S \in \CS\}$ also an $\CR$-bounded
family in $\CL(X, Z)$ and
$$\CR_{\CL(X, Z)}(\CS\CT) \leq \CR_{\CL(X,Y)}(\CT)\CR_{\CL(Y, Z)}(\CS).$$

c) Let $1 < p, \, q < \infty$ and let $D$ be a domain in $\BR^N$.
Let $m=m(\lambda)$ be a bounded function defined on a subset
$\Lambda$ in $\BC$ and let $M_m(\lambda)$ be a map defined by
$M_m(\lambda)f = m(\lambda)f$ for any $f \in L_q(D)$.  Then,
$\CR_{\CL(L_q(D))}(\{M_m(\lambda) \mid \lambda \in \Lambda\})
\leq C_{N,q,D}\|m\|_{L_\infty(\Lambda)}$.

d) Let $n=n(\tau)$ be a $C^1$-function defined on $\BR\setminus\{0\}$
that satisfies the conditions $|n(\tau)| \leq \gamma$
and $|\tau n'(\tau)| \leq \gamma$ with some constant $\gamma > 0$ for any
$\tau \in \BR\setminus\{0\}$.  Let $T_n$ be the operator-valued Fourier multiplier defined by $T_n f = \CF^{-1}(n \CF[f])$
for any $f$ with $\CF[f] \in \CD(\BR, L_q(D))$.  Then,
$T_n$ can be extended to a bounded linear operator
from $L_p(\BR, L_q(D))$ into itself.  Moreover,
denoting this extension also by $T_n$, we have
$$\|T_n\|_{\CL(L_p(\BR, L_q(D)))} \leq C_{p,q,D}\gamma.
$$
{Here,} $\CD(\BR, L_q(D))$ denotes the set of all $L_q(D)$-valued
$C^\infty$-functions on $\BR$ with compact support.
\end{prop}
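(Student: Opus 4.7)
The plan is to treat the four parts separately, since each is an independent structural fact about $\CR$-bounded families; parts (a) and (b) follow directly from the definition, part (c) is an application of Kahane's contraction principle, and part (d) is the heart of the matter, relying on Bourgain's scalar-valued Fourier multiplier theorem in UMD-valued $L_p$-spaces.

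For (a), I would fix $m \in \BN$, $f_k \in X$, $T_k \in \CT$, $S_k \in \CS$, split the Rademacher sum as $\sum_k r_k(T_k+S_k)f_k = \sum_k r_k T_k f_k + \sum_k r_k S_k f_k$, and apply the triangle inequality in $L_p((0,1),Y)$ followed by the $\CR$-boundedness hypotheses on $\CT$ and $\CS$. For (b), I would fix $f_k, T_k \in \CT, S_k \in \CS$ and use the $\CR$-boundedness of $\CS$ on the vectors $g_k := T_k f_k \in Y$ to get
$$\Bigl\|\sum_k r_k S_k T_k f_k\Bigr\|_{L_p((0,1),Z)} \le \CR_{\CL(Y,Z)}(\CS)\,\Bigl\|\sum_k r_k T_k f_k\Bigr\|_{L_p((0,1),Y)},$$
followed by the $\CR$-boundedness of $\CT$ applied to the right-hand side. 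Both steps are independent of the choices made, so the bounds pass to the sup over configurations.

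For (c), the operators $M_{m(\lambda)}$ are pointwise scalar multiplications on $L_q(D)$. Because $|m(\lambda)| \le \|m\|_{L_\infty(\Lambda)}$, the finite sum $\sum_k r_k M_{m(\lambda_k)} f_k$ at each $t \in (0,1)$ equals $\sum_k r_k(t)\,m(\lambda_k) f_k$ pointwise in $D$, so Kahane's contraction principle (applied to the Rademacher average with scalar multipliers of modulus bounded by $\|m\|_{L_\infty(\Lambda)}$) yields the desired estimate with constant a multiple of $\|m\|_{L_\infty(\Lambda)}$; if one prefers to avoid contraction, one can first pass to $L_2$-averages via Khintchine's inequality, apply the scalar bound, and return to $L_p$ by Kahane's inequality. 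This is where the hypothesis $1<q<\infty$ enters, since $L_q(D)$ carries Khintchine/Kahane estimates uniformly.

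The main obstacle is (d): the Bourgain multiplier theorem requires deep vector-valued harmonic analysis and cannot be reduced to the scalar theorem by pointwise Fubini. The plan is to invoke that $L_q(D)$ with $1<q<\infty$ is a UMD space, so the Hilbert transform extends boundedly to $L_p(\BR, L_q(D))$ for $1<p<\infty$. From this and a standard Marcinkiewicz/Littlewood-Paley decomposition of $n(\tau)$ into dyadic pieces, the hypotheses $|n(\tau)|\le \gamma$ and $|\tau n'(\tau)|\le \gamma$ give uniform control of each piece via the dyadic Littlewood-Paley projections, whose boundedness on $L_p(\BR,L_q(D))$ is precisely Bourgain's result. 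Summing the dyadic contributions produces the bound $\|T_n\|_{\CL(L_p(\BR, L_q(D)))} \le C_{p,q,D}\gamma$, and density of $\CD(\BR, L_q(D))$ in $L_p(\BR, L_q(D))$ extends $T_n$ to the full space.
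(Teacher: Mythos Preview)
Your outline is correct in all four parts. The paper itself does not prove this proposition at all: it is stated as a collection of ``fundamental properties of $\CR$-bounded operators and Bourgain's results'' and is attributed to \cite[Remarks 3.2 and Proposition 3.4]{DHP} and \cite{bourgain86}, with no argument given. Your treatment of (a) and (b) directly from Definition~\ref{dfn:1.1}, of (c) via Kahane's contraction principle, and of (d) by invoking the UMD property of $L_q(D)$ together with Bourgain's vector-valued Mikhlin/Marcinkiewicz theorem, is exactly what those references contain, so there is nothing to compare---you have simply supplied the proof the paper chose to omit.
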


We finish this section with showing 
\begin{lem} 
Let $N<q \leq r \leq \infty$. Then 
\begin{equation}\label{sob:1}
\|\nabla(fg)\|_{L_q(D)} \leq C_D\{(\|g\|_{L_\infty(D)}\|\nabla f\|_{L_q(D)}
+ \|\nabla g\|_{L_r(D)}(\alpha \|\nabla f\|_{L_q(D)}
+ C_\alpha\|f\|_{L_q(D)})\}
\end{equation}
for any $\alpha \in (0, 1)$ with some constant 
$C_\alpha$ depending on $\alpha$, where $D$ is any domain in 
$\BR^N$ with uniform $C^2$ boundary. 
\end{lem}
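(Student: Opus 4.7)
My plan is to split $\nabla(fg)$ by the Leibniz rule and treat the two resulting terms separately. First, I write
\[
\|\nabla(fg)\|_{L_q(D)} \le \|g\nabla f\|_{L_q(D)} + \|f\nabla g\|_{L_q(D)} \le \|g\|_{L_\infty(D)}\|\nabla f\|_{L_q(D)} + \|f\nabla g\|_{L_q(D)},
\]
so the task reduces to estimating $\|f\nabla g\|_{L_q(D)}$. Since $q \le r$, the exponent $s$ defined by $1/s = 1/q - 1/r$ satisfies $q \le s \le \infty$, and H\"older's inequality gives $\|f\nabla g\|_{L_q(D)} \le \|f\|_{L_s(D)}\|\nabla g\|_{L_r(D)}$. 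It therefore suffices to prove the interpolation inequality
\[
\|f\|_{L_s(D)} \le \alpha\|\nabla f\|_{L_q(D)} + C_\alpha \|f\|_{L_q(D)}
\]
for every $\alpha \in (0,1)$ and every $s \in [q,\infty]$, with a constant depending only on $D$, $q$, $r$, $\alpha$.

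For this, I exploit the hypothesis $q > N$. Because $D$ is a uniform $C^2$ domain, a standard extension operator exists, so the Sobolev embedding $W^{1,q}(D) \hookrightarrow L_\infty(D)$ holds with
\[
\|f\|_{L_\infty(D)} \le C_{D}\bigl(\|\nabla f\|_{L_q(D)} + \|f\|_{L_q(D)}\bigr).
\]
Combining this with the elementary log-convexity bound
\[
\|f\|_{L_s(D)} \le \|f\|_{L_\infty(D)}^{1-q/s}\,\|f\|_{L_q(D)}^{q/s}
\]
(which is trivial for $s=q$ and follows from $\int |f|^s \le \|f\|_{L_\infty}^{s-q}\int |f|^q$ otherwise), and then applying Young's inequality $ab \le \varepsilon a^{1/\theta} + C_\varepsilon b^{1/(1-\theta)}$ with $\theta = 1-q/s$, I obtain, for any $\varepsilon > 0$,
\[
\|f\|_{L_s(D)} \le \varepsilon \|f\|_{L_\infty(D)} + C_\varepsilon \|f\|_{L_q(D)} \le \varepsilon C_D \|\nabla f\|_{L_q(D)} + (\varepsilon C_D + C_\varepsilon)\|f\|_{L_q(D)}.
\]
Choosing $\varepsilon = \alpha/C_D$ yields the claimed interpolation inequality; inserting it into the H\"older bound and combining with the Leibniz step produces \eqref{sob:1}.

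The only delicate point is ensuring that the Sobolev embedding $W^{1,q}(D)\hookrightarrow L_\infty(D)$ is available with a constant uniform across the (possibly unbounded) domain $D$; this is precisely what the uniform $C^2$ assumption buys us, via a bounded extension operator $W^{1,q}(D)\to W^{1,q}(\BR^N)$ together with the classical embedding on $\BR^N$ for $q>N$. The case $r=\infty$ (hence $s=q$) requires no interpolation and is handled directly by H\"older, with the whole contribution absorbed by the $C_\alpha\|f\|_{L_q(D)}$ term; the case $r<\infty$ is the substantive one and is treated by the argument above.
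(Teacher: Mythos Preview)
Your argument follows the same skeleton as the paper's proof---Leibniz rule, then H\"older with the conjugate exponent $s$ defined by $1/s=1/q-1/r$, then an interpolation bound on $\|f\|_{L_s}$---but your interpolation step has a gap at the endpoint $r=q$. In that case $s=\infty$, so your log-convexity bound degenerates to the tautology $\|f\|_{L_\infty}\le\|f\|_{L_\infty}$; the Young exponent $\theta=1-q/s$ equals $1$, and you cannot extract a small $\varepsilon$. What survives is only the raw Sobolev embedding $\|f\|_{L_\infty}\le C_D(\|\nabla f\|_{L_q}+\|f\|_{L_q})$, which places a fixed constant $C_D$, not an arbitrary $\alpha\in(0,1)$, in front of $\|\nabla f\|_{L_q}$. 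Since the lemma is stated (and applied later in the paper) for $q\le r$ including equality, this endpoint has to be covered.

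The fix is easy and brings you in line with the paper's argument: for $q>N$ the Gagliardo--Nirenberg inequality gives $\|f\|_{L_\infty}\le C\|\nabla f\|_{L_q}^{N/q}\|f\|_{L_q}^{1-N/q}+C\|f\|_{L_q}$ with the exponent $N/q$ strictly less than $1$, so Young's inequality \emph{does} produce a small coefficient. The paper phrases this through the fractional space $W^{N/q+\tau}_q$ with $N/q+\tau<1$: it uses the embedding $W^{N/q+\tau}_q\hookrightarrow L_s$ (valid for every $s\in[q,\infty]$ since $N(1/q-1/s)\le N/q<N/q+\tau$) followed by the real-interpolation estimate $\|f\|_{W^{N/q+\tau}_q}\le C\|f\|_{L_q}^{1-(N/q+\tau)}\|f\|_{H^1_q}^{N/q+\tau}$, which handles all $s$ uniformly. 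Once you patch the endpoint, your route and the paper's coincide.
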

\begin{proof}
When $r=q$, we have
$$\|\nabla(fg)\|_{L_q(D)}\leq \|\nabla f\|_{L_q(D)}\|g\|_{L_\infty(D)}
+ \|f\|_{L_\infty(D)}\|\nabla g\|_{L_q(D)}.
$$
Since $N < q=r < \infty$, by Sobolev's imbedding theorem, we have
\begin{equation}\label{sob:2}
\|\nabla(fg))\\|_{L_q(D)}\leq C_D\{\|g\|_{L_\infty(D)}\|\nabla f\|_{L_q(D)}
+ C_{q, \tau}\|\nabla g\|_{L_r(D)}\|f\|_{W^{N/q+\tau}_q(D)}\}
\end{equation}
with some small number $\tau > 0$ for which $N/q + \tau < 1$, where
$C_{q, \tau}$ is a constant depending on $q$ and $\tau$ essentially. 
When $1 < q < r$, let $s$ be a number for which $1/q = 1/r + 1/s$, and then
by H\"older's inequality, we have
$$
{\|\nabla(fg)\|_{L_q(D)}}\leq C_D\{\|g\|_{L_\infty(D)}\|\nabla f\|_{L_q(D)}
+ \|\nabla g\|_{L_r(D)}\|f\|_{L_s(D)}\}.
$$
Since $N(1/q-1/s) = N/r < 1$, by Sobolev's imbedding theorem, we have \eqref{sob:2}.  
  
Finally, by real interpolation theory,$$\|f\|_{W^{N/q+\tau}_q(D)} \leq C\|f\|_{L_q(D)}^{1-(N/q+\tau)}
\|f\|_{H^1_q(D)}^{(N/q+\tau)},
$$
and therefore we have \eqref{sob:1}. 
\end{proof}

\subsection{Main results}
In this paper, we shall prove the maximal 
$L_p$-$L_q$ regularity theorem for Eq. \eqref{1.1}:
\begin{thm} \label{thm:1.1}
Let $1 < p, q < \infty$ and $T > 0$.  Assume that 
$2/p + 1/q \not =1$ and that 
$\Omega$ is a uniformly $C^2$ domain in $\BR^N$ $(N \geq 2)$.\\
{\bf Existence.}~ 
Let $\bu_0=(u_{01}, \ldots, u_{0n})^\top \in B^{2(1-1/p)}_{q,p}(\Omega)^n$, 
$\bF \in L_p((0, T), L_q(\Omega)^n)$ and 
$\bG \in L_p(\BR, H^1_q(\Omega)^n) 
\cap
H^{1/2}_p(\BR, L_q(\Omega)^n)$ be given functions satisfying
the compatibility conditions:
\begin{equation}\label{1.4} B(\nabla\bu_0\cdot\bn) 
 = \bG(\cdot, 0)
\quad\text{on $\Gamma$}
\end{equation}
provided $2/p + 1/q < 1$.  Then, problem \eqref{1.1}
admits a unique solution $\bu = (u_1, \ldots, u_n)^\top$
with
\begin{equation} \label{1.5}
\bu \in L_p((0, T), H^2_q(\Omega)^n)
\cap H^1_p((0, T), L_q(\Omega)^n)
\end{equation}
possessing the estimate:
\begin{equation} \label{1.6}
\begin{aligned}
&\|\bu\|_{L_p((0, T), H^2_q(\Omega))}
+ \|\pd_t\bu\|_{L_p((0, T), L_q(\Omega))} \leq 
 Ce^{\gamma T}(\|\bu_0\|_{B^{2(1-1/p)}_{q,p}(\Omega)} \\
&\quad 
+ \|\bF\|_{L_p((0, T), L_q(\Omega))}
+ \|e^{-\gamma t}\bG\|_{L_p(\BR, H^1_q(\Omega))}
+ (1+\gamma^{1/2})\|e^{-\gamma t}\bG\|_{H^{1/2}_p(\BR, L_q(\Omega))})
\end{aligned}
\end{equation}
for any $\gamma \geq \gamma_0>0$ with  some constants $C$ and $\gamma_0$,
where $C$ is independent of $\gamma$. \\
{\bf Uniqueness.} Let $\bu$ be a $n$-vector of functions satisfying the 
regularity condition \eqref{1.5} and the homogeneous equations:
\begin{equation}\label{homo:1}
R\pd_t\bu -\dv(B\nabla\bu)=0 \quad\text{in $\Omega\times(0, T)$},
\quad
B(\nabla\bu\cdot\bn)|_\Gamma = 0, \quad
\bu|_{t=0} = 0,
\end{equation}
then $\bu = 0$. 
\end{thm}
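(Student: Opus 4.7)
The plan is to reduce \eqref{1.1} to the associated generalised resolvent problem, prove $\CR$-boundedness of the corresponding family of solution operators by the four-step scheme \emph{whole space} $\to$ \emph{half space} $\to$ \emph{bent half space} $\to$ \emph{uniform $C^2$ domain}, and then transfer $\CR$-boundedness into maximal $L_p$-$L_q$ regularity via the operator-valued Fourier multiplier machinery of Proposition \ref{prop:4.1} (Weis's theorem). The exponential shift $\bu\mapsto e^{-\gamma t}\bu$ takes the evolution problem on $(0,T)$ to one on $\BR$, and pays off in the weight $e^{\gamma T}$ appearing in \eqref{1.6}.

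Concretely, I would first consider the resolvent problem $\lambda R\bu-\dv(B\nabla\bu)=\bF$ in $\Omega$, $B(\nabla\bu\cdot\bn)=\bG$ on $\Gamma$, for $\lambda$ in a sector $|\arg\lambda|\le\pi-\epsilon$, $|\lambda|\ge\gamma_0$, and build a solution operator $\CS(\lambda)$ such that $\{(\lambda,\lambda^{1/2}\nabla,\nabla^2)\CS(\lambda)\}$ is $\CR$-bounded on the appropriate data space. In $\BR^N$ with frozen coefficients the symbol $(\lambda R(x_0)+B(x_0)|\xi|^2)^{-1}$ is analysed by simultaneous diagonalisation of the symmetric positive-definite pair $(R(x_0),B(x_0))$, reducing it to $n$ scalar symbols of Agmon type for which Proposition \ref{prop:4.1}(c,d) directly supplies the $\CR$-bound. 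The half-space step is the technical heart and the main obstacle: a partial Fourier transform in $x'$ produces a $2n$-dimensional ODE system in $x_N$, and the symmetric positive definiteness \eqref{1.3} ensures exactly $n$ stable and $n$ unstable characteristic roots with a uniform spectral gap depending only on $m_1,M_0$, so that the Neumann-type boundary symbol induced by $B\pd_N\bu|_{x_N=0}=\bG$ is uniformly invertible. This is precisely what replaces the Lopatinskii-Shapiro condition. Writing $\bu$ as an explicit superposition of exponentials in $x_N$ acting on suitable extensions of $\bF$ and $\bG$, and verifying $\CR$-boundedness termwise via Proposition \ref{prop:4.1}, completes the half-space case. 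The bent half space is then treated by a Neumann series, with the perturbation absorbed through the small $C^1$ norm of the boundary graph. Finally, using the atlas of Definition \ref{dfn:1.2}, a partition of unity, Lemma \ref{sob:1}, and the smallness of $R-R(x_0)$, $B-B(x_0)$ on small patches furnished by \eqref{1.2}, the local solution operators are patched into a global $\CS(\lambda)$ with commutator errors absorbed into the resolvent for large $|\lambda|$.

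Once $\CR$-sectoriality on $\Omega$ is established, the estimate \eqref{1.6} in the case $\bG=0,\bu_0=0$ follows from Weis's theorem applied to the shifted operator $R(\pd_t+\gamma)-\dv(B\nabla\cdot)$, and restriction from $\BR$ to $(0,T)$ accounts for the factor $e^{\gamma T}$. Non-zero boundary data $\bG$ are removed by constructing a lift $\bw\in L_p(\BR,H^2_q)\cap H^1_p(\BR,L_q)$ with $B(\nabla\bw\cdot\bn)=\bG$, whose norm is exactly controlled by the last two terms on the right of \eqref{1.6}; the $\lambda^{1/2}$ weight in the $\CR$-bound corresponds, through the definition \eqref{1.6*}, to the $H^{1/2}_p$ part of the $\bG$-norm. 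Non-zero initial data are handled by noting that $B^{2(1-1/p)}_{q,p}(\Omega)$ is the Lions trace space of \eqref{1.5} and solving the homogeneous Cauchy problem through the analytic semigroup generated by the $\CR$-sectorial realisation; the compatibility condition \eqref{1.4} is exactly what is required, when $2/p+1/q<1$, to match the boundary trace of $\bw$ at $t=0$. For uniqueness I would use the symmetry of $R,B$ together with \eqref{1.3} to derive, from \eqref{homo:1}, an energy identity $\tfrac12\frac{d}{dt}\langle R\bu,\bu\rangle+m_1\|\nabla\bu\|_{L_2}^2\le 0$ in a localised sense and iterate; alternatively, duality against the inhomogeneous problem supplied by the existence part (with arbitrary test right-hand side in $L_{p'}(L_{q'})$) gives the same conclusion on a general uniformly $C^2$ domain.
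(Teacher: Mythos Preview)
Your overall architecture is the same as the paper's: establish $\CR$-boundedness of the resolvent solution operator $\CS(\lambda)$ by the four-step localisation scheme (this is Theorem~\ref{thm:1.2}), then feed it into Weis's operator-valued multiplier theorem to get maximal $L_p$-$L_q$ regularity, handling $\bu_0$ via the analytic semigroup and the trace space $B^{2(1-1/p)}_{q,p}$. Two technical steps, however, are carried out quite differently.

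\emph{Half-space.} You propose to analyse the $2n$-dimensional ODE in $x_N$ directly, splitting stable and unstable characteristic roots and checking that the Neumann boundary symbol is invertible. The paper avoids this computation entirely: for $\bg=0$ it takes the \emph{even extension} of $\bff$ in $x_N$, solves the whole-space problem, and verifies by a residue calculation that the solution automatically satisfies $\pd_N\bU|_{x_N=0}=0$; general $\bg$ is then reduced to $\bg=0$ by solving the auxiliary \emph{scalar} problem $\lambda\bw-\Delta\bw=0$, $\pd_N\bw=-(B^1)^{-1}\bg$, whose $\CR$-bounded solution operator is written explicitly via the Volevich trick. Your route is more systematic and closer to verifying a Lopatinskii--Shapiro condition from the structure; the paper's route is more ad hoc but bypasses the matrix ODE entirely. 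Your simultaneous-diagonalisation remark for the whole space (reducing to $n$ scalar Agmon symbols) is in fact sharper than what the paper does---the paper keeps the matrix symbol and estimates $(\lambda R^0+|\xi|^2 B^0)^{-1}$ directly via a cofactor argument---and would also decouple the half-space Neumann problem into $n$ scalar ones, making the ODE analysis trivial.

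\emph{Uniqueness.} The energy identity you write down is only formal unless $q=2$; for general $1<p,q<\infty$ the solution need not lie in any $L_2$-based space, so that route has a gap (your ``localised sense'' would need a genuine argument). Your duality alternative is fine. The paper instead extends $\bu$ by zero to $t<0$, reflects about $t=T$ to get a compactly-in-time function on $\BR$, applies the Laplace transform, invokes the uniqueness part of Theorem~\ref{thm:1.2} for each $\lambda\in\Sigma_{\epsilon,\lambda_0}$, and then uses that $\CL[\bv](\lambda)$ is entire (support in $(0,2T)$) to conclude $\bv\equiv 0$. This argument stays inside $L_q$ and needs no duality or $L_2$ structure.
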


To prove Theorem \ref{thm:1.1}, our approach is to use the 
$\CR$ bounded solution operator for the corresponding 
generalized resolvent problem and Weis's operator valued
Fourier multiplier theorem \cite{Weis}. Below we state
the existence theorem of such operators.

We consider the generalized resolvent problem corresponding
to Eq. \eqref{1.1*}:
\begin{equation}\label{1.8}
\lambda R\bv - \dv(B\nabla\bv) = \bff \quad
\text{in $\Omega$}, \quad B(\nabla \bv \cdot \bn) = \bg
\quad\text{on $\Gamma$},
\end{equation}
 where $\bv=(v_1, \ldots, v_n)^\top$, 
$\bff=(f_1, \ldots, f_n)^\top$ and $\bg = (g_1, \ldots, g_n)^\top$.
  We shall prove the following theorem.
\begin{thm}\label{thm:1.2} Let $1 < q < \infty$ and $0 < 
\epsilon < \pi/2$.  Assume that $\Omega$ is a unformly
$C^2$ domain in $\BR^N$. \\
{\bf Existence.}  Let 
\eq{
X_q(\Omega) & = \{(\bff, \bg) \mid \bff = (f_1, \ldots, f_n)
\in L_q(\Omega)^n, 
\quad
\bg = (g_1, \ldots, g_n)^\top \in H^1_q(\Omega)^n\},
\\
\CX_q(\Omega) & = \{(F_1, F_2, F_3) \mid 
F_1, F_2 \in L_q(\Omega)^n, \quad
F_3 \in H^1_q(\Omega)^n\}, }
with the norms
\begin{equation}\label{norm:1}\begin{aligned}
\|(\bff, \bg)\|_{X_q(\Omega)} &= \|\bff\|_{L_q(\Omega)} 
+ \|\bg\|_{H^1_q(\Omega)}, \\
\|(F_1, F_2, F_3)\|_{\CX_q(\Omega)} &= \|(F_1, F_2)\|_{L_q(\Omega)} 
+ \|F_3\|_{H^1_q(\Omega)},
\end{aligned}\end{equation}
and
\eq{
\Sigma_\epsilon = \{\lambda \in \BC\setminus\{0\} \mid
|\arg\lambda| \leq \pi-\epsilon\},
\quad
\Sigma_{\epsilon, \lambda_0}
= \{\lambda \in \Sigma_\epsilon \mid
|\lambda| \geq \lambda_0\}.
}
Then, there exist a constant $\lambda_0 > 0$ and 
an operator family $\CS(\lambda) \in 
{\rm Hol}(\Sigma_{\epsilon, \lambda_0}, 
\CL(\CX(\Omega), H^2_q(\Omega)^n))$ (holomorphic on $\Sigma_{\epsilon, \lambda_0}$) such that
for any $(\bff, \bg) \in X_q(\Omega)$
and $\lambda \in \Sigma_{\epsilon, \lambda_0}$, 
$\bv = (v_1, \ldots, v_n)^\top=
 \CS(\lambda)H_\lambda(\bff, \bg)$ with
$H_\lambda(\bff, \bg) = (\bff, \lambda^{1/2}\bg, \bg)$
is a  solution of  Eq. \eqref{1.8}.

Moreover, we have
\begin{equation} \label{1.9}
\CR_{\CL(\CX_q(\Omega), H^{2-k}_q(\Omega)^n)}
(\{(\tau\pd_\tau)^\ell(\lambda^{k/2}\CS(\lambda)) \mid
\lambda \in \Sigma_{\epsilon, \lambda_0}\})
\leq r_b
\end{equation}
for $k=0,1,2$ and $\ell=0,1$ with some constant
$r_b$, where $\lambda = \gamma + i\tau \in \BC$.
\\
{\bf Uniqueness.} Let $\bv \in H^2_q(\Omega)^n$ satisfy 
the homogeneous equations:
$$\lambda R\bv - \dv(B\nabla\bv) = 0 \quad\text{in $\Omega$},\quad
B(\nabla\bv\cdot\bn)|_\Gamma=0,
$$
then $\bv = 0$.
\end{thm}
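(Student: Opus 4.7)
The proof will follow the localization strategy outlined in the introduction: I would construct the $\CR$-bounded solution operator family $\CS(\lambda)$ by solving the generalized resolvent problem \eqref{1.8} successively in four settings of increasing generality --- the whole space $\BR^N$, the half-space $\HS$, a bent half-space obtained as a small $C^2$ perturbation of $\HS$ via a chart function $h(x')$ as in Definition \ref{dfn:1.2}, and finally $\Omega$ itself via a partition of unity. The lifting $H_\lambda(\bff,\bg) = (\bff, \lambda^{1/2}\bg, \bg)$ keeps the boundary-data contributions $\lambda^{1/2}\bg$ and $\nabla\bg$ as independent slots in $\CX_q(\Omega)$, so that when the time variable is reintroduced in the proof of Theorem \ref{thm:1.1} the factor $\lambda^{1/2}$ matches the $H^{1/2}_p(\BR, L_q)$-norm of $\bG$ via Weis's operator-valued Fourier multiplier theorem.

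For the whole-space step with coefficients frozen at a point $x_0 \in \Omega$, the principal symbol is $\lambda R(x_0) + B(x_0)|\xi|^2$. Since $R(x_0)$ and $B(x_0)$ are symmetric and positive-definite by \eqref{1.3}, I would factor
\[
\lambda R + B|\xi|^2 = R^{1/2}\bigl(\lambda I + R^{-1/2} B R^{-1/2}|\xi|^2\bigr) R^{1/2}
\]
and diagonalize the bracketed matrix; its eigenvalues $\lambda + \mu_j|\xi|^2$ with $\mu_j \geq m_1^2/M_0 > 0$ stay in a sector strictly contained in $\Sigma_\epsilon$, hence the symbol is invertible and both it and its $\tau\pd_\tau$-derivative satisfy the Mikhlin-type bounds needed to apply Proposition \ref{prop:4.1}(c)(d). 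This gives \eqref{1.9} in the frozen-coefficient whole-space case, and the variable-coefficient version then follows by a standard perturbation argument that uses the Hölder continuity \eqref{1.2} together with the Sobolev-type inequality \eqref{sob:1} to control the commutator $\dv\bigl((B(x) - B(x_0))\nabla\bv\bigr)$.

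For the half-space step I would apply the partial Fourier transform $\CF_{x'}$ to reduce \eqref{1.8} (with coefficients frozen at a boundary point) to a system of ODEs on $(0,\infty)$ parametrized by $(\lambda,\xi')$; simultaneous reduction of $R$ and $B$ decouples this into $n$ scalar Neumann-type resolvent problems whose Green's functions are explicit. The structural point is that symmetry and positive-definiteness of $B$ and $R$ make the boundary ODE system solvable in closed form, so the usual explicit verification of the Lopatinskii--Shapiro condition is bypassed. Collecting the resulting multipliers and applying the Volevich trick with Proposition \ref{prop:4.1}(d) yields the $\CR$-bound on $\HS$; the bent half-space case follows as a Neumann-series perturbation once $\lambda_0$ is taken large relative to the chart-norm bound $K$. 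Finally, the domain case is closed by a uniform partition of unity $\{\zeta_j\}$ subordinate to a covering by interior whole-space balls and boundary bent half-space patches, with commutators $[\dv(B\nabla\cdot), \zeta_j]$ absorbed for $|\lambda| \geq \lambda_0$ large enough.

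The main obstacle I anticipate is keeping the $\CR$-bounds uniform through this partition-of-unity step on a possibly unbounded uniform $C^2$ domain: one must sum infinitely many local solution operators while preserving the $\CR$-bound, which calls for an $\ell^q$-summability argument in the style of Enomoto--Shibata \cite{ES1}, and the control of commutator terms involving $\nabla B$ and $\nabla R$ relies crucially on $r > N$ in \eqref{1.2} and the lemma \eqref{sob:1}. For the uniqueness claim, I would pair the homogeneous equation with $\bar\bv$ in $L_2$ (after a short duality step to pass from the $H^2_q$ setting to $L_2$-pairings on test regions), integrate by parts using $B(\nabla\bv\cdot\bn) = 0$, and invoke \eqref{1.3} to obtain
\[
\lambda\,\langle R\bv,\bar\bv\rangle_\Omega + \langle B\nabla\bv,\nabla\bar\bv\rangle_\Omega = 0,
\]
whose real part is strictly positive for $\lambda \in \Sigma_{\epsilon,\lambda_0}$ unless $\bv \equiv 0$.
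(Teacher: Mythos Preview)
Your overall localization strategy matches the paper's: whole space $\to$ half-space $\to$ bent half-space $\to$ $\Omega$ via a uniform partition of unity, with commutators absorbed for large $\lambda_0$ and the $\ell^q$-summability of the local pieces handled as in \cite{ES1}. The genuine difference is the half-space step. You propose to simultaneously diagonalize the frozen constant matrices $R(x_0)$ and $B(x_0)$ (legitimate, since both are symmetric positive-definite) and thereby decouple the half-space problem into $n$ scalar Neumann resolvent problems $\lambda w_j - \mu_j\Delta w_j = \tilde f_j$, $-\mu_j\pd_N w_j|_{x_N=0} = \tilde g_j$, whose Green kernels and $\CR$-bounds are standard. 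The paper does \emph{not} diagonalize: for $\bg=0$ it even-extends $\bff$ to $\BR^N$, applies the whole-space operator, and then verifies $\pd_N\bU|_{x_N=0}=0$ through a residue computation on $\det(\lambda R^1+|\xi|^2B^1)$ that occupies most of Section~\ref{sec:3}; for $\bg\neq0$ it solves the auxiliary \emph{scalar} problem $\lambda\bw-\Delta\bw=0$, $\pd_N\bw=-(B^1)^{-1}\bg$ and corrects. Your route is shorter and conceptually cleaner; the paper's buys not having to track the $x_0$-dependence of the diagonalizing transformation, though that dependence is in fact harmless because the transformation is bounded with bounded inverse uniformly in terms of $m_1$, $M_0$.

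One caution on uniqueness: the $L_2$-pairing identity you write down is only directly available when $q=2$ (or $q\geq 2$ on a bounded $\Omega$); for general $1<q<\infty$ on a possibly unbounded uniform $C^2$ domain you need exactly the duality argument you gesture at---solve the adjoint problem in $H^2_{q'}(\Omega)$ for arbitrary right-hand side in $L_{q'}(\Omega)$ and pair---which is how the paper disposes of it (``uniqueness follows from the existence of solutions of the dual problem'').
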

\begin{remark}
The constant $\gamma_0$ from Theorem \ref{thm:1.1} can be chosen the same as the constant $\lambda_0$ from Theorem \ref{thm:1.2}.
\end{remark}
{
The second main result of our paper extends Theorem \ref{thm:1.1} giving a time-independent estimate 
provided boundary of the domain and zero mean assumptions on the data. 
\begin{thm}  \label{thm:1.1global}
Let $1 < p, q < \infty$ and $T = \infty$ in Theorem  \ref{thm:1.1}.
  Assume that $2/p + 1/q \not=1$ and that 
$\Omega$ is a bounded domain, whose boundary, $\Gamma$, is a compact $C^2$ 
hypersurface. Then, there exists a $\gamma_0 > 0$ for which the following
assertion holds: Let $\bu_0$, $\bF$ and $\bG$ be functions given in Theorem  \ref{thm:1.1}.
Moreover, we assume that 
\begin{gather}
\int_\Omega \bF(x, t)\,dx + \int_\Gamma \bG(x, t)\,d\sigma = 0 
\quad\text{for any $t> 0$  \quad and}\quad
\int_\Omega R\bu_0\,dx = 0, \label{cond:1} \\
\|e^{\gamma t}\bF\|_{L_p((0, \infty), L_q(\Omega))}
+ \|e^{\gamma t}\bG\|_{L_p(\BR, H^1_q(\Omega)^n)} + 
(1 + \gamma^{1/2})\|e^{\gamma t}\bG\|_{H^{1/2}_p(\BR, L_q(\Omega))}
< \infty \label{cond:2}
\end{gather}
for any $\gamma \leq \gamma_0$,
where $d\sigma$ is the surface element of $\Gamma$.   Then, 
the solution $\bu$ obtained in Theorem 6 decays exponentially, that is
$\bu$ satisfies the estimate:
\begin{align*}
&\|e^{\gamma t}\bu\|_{L_p((0, \infty), H^2_q(\Omega))} 
+ \|e^{\gamma t}\pd_t\bu\|_{L_p((0, \infty), L_q(\Omega))}\\
&\quad \leq C(\|\bu_0\|_{B^{2(1-1/p)}_{q,p}(\Omega)} 
+ \|e^{\gamma t}\bF\|_{L_p((0, \infty), L_q(\Omega))}
+ \|e^{\gamma t}\bG\|_{L_p(\BR, H^1_q(\Omega))}
+ \|e^{\gamma t}\bG\|_{H^{1/2}_p(\BR, L_q(\Omega))})
\end{align*}
for any $\gamma \leq \gamma_0$ with some constant $C$. 
\end{thm}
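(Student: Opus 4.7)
The plan is to reduce Theorem \ref{thm:1.1global} to an analysis of the resolvent operator $\CS(\lambda)$ from Theorem \ref{thm:1.2}, exploiting compactness of $\Omega$ to shift the spectrum strictly into the left half-plane on the zero-mean subspace, and then to apply Weis's operator-valued Fourier multiplier theorem with an exponential time weight. As a first step I verify that the zero-mean condition propagates in time: integrating the first equation of \eqref{1.1*} over $\Omega$, using the divergence theorem with the boundary condition together with \eqref{cond:1}, yields
$$\frac{d}{dt}\int_\Omega R\bu(x,t)\,dx = \int_\Omega \bF\,dx + \int_\Gamma \bG\,d\sigma = 0,$$
so $\int_\Omega R\bu_0\,dx = 0$ forces $\int_\Omega R\bu(\cdot,t)\,dx \equiv 0$ for all $t$. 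The analogous identity at the resolvent level shows that data $(\bff,\bg)$ with $\int_\Omega \bff\,dx+\int_\Gamma \bg\,d\sigma=0$ produce solutions $\bv$ of \eqref{1.8} with $\int_\Omega R\bv\,dx = 0$ for every $\lambda$ in the resolvent set.

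Next I would study the unbounded operator $\CA\bu = R^{-1}\dv(B\nabla\bu)$ on the domain $\{\bu\in H^2_q(\Omega)^n : B(\nabla\bu\cdot\bn)|_\Gamma=0\}$, acting on $L_q(\Omega)^n$. Boundedness of $\Omega$ makes $H^2_q(\Omega)\hookrightarrow L_q(\Omega)$ compact, so $\CA$ has compact resolvent; its spectrum is a discrete set of eigenvalues of finite multiplicity contained in $\BC\setminus\Sigma_{\epsilon,\lambda_0}$, and in any bounded subset of $\BC$ only finitely many of them occur. Testing an eigenvalue equation against $\overline{\bv}$ and using symmetry of $B$, $R$ together with \eqref{1.3} and the boundary condition gives $\mathrm{Re}\,\lambda \leq 0$; equality forces $\nabla\bv\equiv 0$, hence $\bv$ is constant, and then $\lambda\int_\Omega R\bv\,dx = 0$ yields $\lambda=0$. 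Therefore $0$ is the only spectral point on $i\BR$, with eigenspace spanned by constant $n$-vectors. Restricting $\CA$ to the closed subspace of mean-zero functions removes this kernel, and by discreteness there exists $\gamma_1>0$ such that the whole region $\Sigma_\epsilon - \gamma_1 := \{\lambda : \lambda + \gamma_1 \in \Sigma_\epsilon\}$ lies in the resolvent set of the restricted operator.

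The third step is to upgrade $\CS(\lambda)$ to an $\CR$-bounded family on $\Sigma_\epsilon - \gamma_1$ acting on mean-zero data. For $|\lambda|$ large, a resolvent-identity / Neumann-series argument expresses $\CS(\lambda)$ in terms of $\CS(\lambda+\gamma_1)$, the latter sitting in $\Sigma_{\epsilon,\lambda_0}$ and $\CR$-bounded by \eqref{1.9}; products and sums of $\CR$-bounded families remain $\CR$-bounded by Proposition \ref{prop:4.1}~(a), (b). For $|\lambda|$ bounded within $\Sigma_\epsilon - \gamma_1$, the map $\lambda\mapsto\CS(\lambda)$ is holomorphic into $\CL(\CX_q(\Omega),H^2_q(\Omega)^n)$ on a neighborhood of the relevant compact set, hence $\CS(\lambda)$ and $\tau\pd_\tau\CS(\lambda)$ are uniformly bounded there, and Proposition \ref{prop:4.1}~(c) upgrades this to $\CR$-boundedness. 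Patching yields a uniform $\CR$-bound analogous to \eqref{1.9} on $\Sigma_\epsilon - \gamma_1$ for $k=0,1,2$ and $\ell=0,1$.

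Finally, mirroring the passage from Theorem \ref{thm:1.2} to Theorem \ref{thm:1.1}, I fix $\gamma_0 := \gamma_1/2$ and for $\gamma\leq\gamma_0$ set $\bv(x,t) = e^{\gamma t}\bu(x,t)$, so that
$$R\pd_t\bv - \gamma R\bv - \dv(B\nabla\bv) = e^{\gamma t}\bF,\quad B(\nabla\bv\cdot\bn) = e^{\gamma t}\bG,\quad \bv|_{t=0}=\bu_0.$$
The corresponding Fourier symbol in $t$ is $\CS(i\tau - \gamma)$ acting on mean-zero data by Step 1, and the whole vertical line $\{i\tau-\gamma : \tau\in\BR\}$ lies strictly inside $\Sigma_\epsilon - \gamma_1$. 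The $\CR$-bound of Step 3 combined with Weis's operator-valued Fourier multiplier theorem and Proposition \ref{prop:4.1}~(d), applied exactly as in the proof of Theorem \ref{thm:1.1}, yields the maximal $L_p$-$L_q$ estimate for $\bv$ on $(0,\infty)$, which translates back to the claimed exponentially weighted estimate for $\bu$, independent of $T=\infty$. The main obstacle I anticipate is Step 3: constructing the $\CR$-bounded family on a region touching the imaginary axis while uniformly controlling the $(\tau\pd_\tau)^\ell$ derivatives across the transition zone requires the resolvent-identity construction at high frequencies to be matched carefully with the bounded holomorphic piece, rather than obtained by a naive translation of the spectral parameter in Theorem \ref{thm:1.2}.
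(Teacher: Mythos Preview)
Your outline is sound but follows a genuinely different route from the paper. You aim to push the $\CR$-bounded solution operator $\CS(\lambda)$ from $\Sigma_{\epsilon,\lambda_0}$ all the way to a translated sector $\Sigma_\epsilon-\gamma_1$ crossing the imaginary axis, and then run the Weis multiplier argument directly on $e^{\gamma t}\bu$. The paper avoids this extension entirely by a \emph{shift-and-compensate} decomposition: it first solves the $\eta$-shifted problem $R(\pd_t\bw+\eta\bw)-\dv(B\nabla\bw)=\bF$, $B(\nabla\bw\cdot\bn)=\bG$, $\bw|_{t=0}=\bu_0$, with $\eta$ so large that $\Sigma_\epsilon+\eta\subset\Sigma_{\epsilon,\lambda_0}$, so Theorem~\ref{thm:1.2} applies verbatim and already yields the weighted maximal regularity estimate for $\bw$; it then proves, essentially via your Step~2 (Fredholm alternative plus the energy identity), that the semigroup $T(t)$ decays exponentially on $\hat L_q(\Omega)^n$; finally it solves the compensation equation $R\pd_t\bv-\dv(B\nabla\bv)=-\eta R\bw$ with zero initial and boundary data by Duhamel, $\bv=-\eta\int_0^tT(t-s)R\bw(s)\,ds$, using only the semigroup bound $\|T(t)\|\le Me^{-\delta t}$ (not $\CR$-boundedness) to control $\|e^{\gamma t}\bv\|_{L_pL_q}$, and bootstraps to $H^2_q$ by reinserting $\bv$ into the shifted equation. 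The payoff is that your delicate Step~3 is bypassed. If you pursue your route, note that Proposition~\ref{prop:4.1}(c) is only about scalar multipliers $f\mapsto m(\lambda)f$ and does not give $\CR$-boundedness of a general norm-bounded operator family; the statement you actually need on the bounded-$|\lambda|$ region---that a holomorphic operator-valued map is $\CR$-bounded on compact subsets of its domain---is true but requires the Cauchy integral representation together with Kahane's contraction principle (see e.g.\ \cite{DHP}). You should also make explicit that the initial datum $\bu_0$ is handled by the exponentially decaying semigroup on $\hat L_q(\Omega)^n$, which is a consequence of your Step~2 but is not stated.
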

}
Theorem \ref{thm:1.1} can be proved by applying 
Weis' theorem \cite{Weis} to the representation formula
of solutions to \eqref{1.1} 
{given} by Theorem \ref{thm:1.2}. 
Thus, this paper is devoted to the proof of Theorem \ref{thm:1.2}
mainly. In Section \ref{sec:2} we solve the problem in the whole space.
Section \ref{sec:3} is dedicated to problem in a halfspace. This is the most technical part of the proof 
because of complexity of the solution formula. In Section \ref{sec:4} we consider a result 
in a perturbed halfspace and finally, in Section \ref{sec:5}, we use the properties of a uniform $C^2$ domains
to prove Theorem \ref{thm:1.2}. 
The two concluding sections are then dedicated to the proofs of Theorem \ref{thm:1.1} in Section \ref{sec:6}, 
{and Theorem \ref{thm:1.1global} in Section \ref{sec:7}}.

\section{Analysis in the whole space} \label{sec:2}
\subsection{Constant coefficients case}\label{subsec:2.1}

Let $x_0$ be any point of $\Omega$ and set 
$B^0= B(x_0)$ and $R^0= 
R(x_0)$.   In this subsection,
we consider the constant coefficients system
\begin{equation}\label{2.2}
\lambda R^0\bv - B^0\Delta \bv = \bff
\quad\text{in $\BR^N$}.
\end{equation}
By assumptions \eqref{1.2} and \eqref{1.3}, 
$R^0$ and $B^0$ are symmetric matrices and satisfy the following
conditions:
\begin{equation} \label{2.3}
|R^0|, |B^0| \leq M_0,
\quad
\langle R^0\ba, \overline{\ba} \rangle \geq m_1|\ba|^2, 
\quad
\langle B^0\ba, \overline{\ba} \rangle \geq m_1|\ba|^2
\end{equation}
for any $\ba \in \BC^n$.  Applying the Fourier transform to 
Eq. \eqref{2.2} gives
\begin{equation} \label{2.4}
(R^0\lambda + B^0|\xi^2|)\CF[\bv] = \CF[\bff]
\quad\text{in $\BR^N$}.
\end{equation}
\begin{lem} \label{lem:2.1} Let $0 < \epsilon < \pi/2$.
The matrix $R^0\lambda + B^0|\xi^2|$ is invertible at least
for $(\lambda, \xi) \in \Sigma_\epsilon \times (\BR^N\setminus\{0\})$ and 
there exists a constant $m_2 > 0$ depending on 
$M_0$, $m_1$ and $\epsilon$, but independent of 
$x_0 \in \Omega$, for which 
\begin{equation}\label{2.5}
|(R^0\lambda + B^0|\xi|^2)^{-1}|
\leq m_2(|\lambda| + |\xi^2|)^{-1}
\end{equation}
for any $(\lambda, \xi) \in \Sigma_\epsilon\times (\BR^N\setminus\{0\})$.
\end{lem}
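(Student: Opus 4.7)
My plan is to establish the pointwise lower bound
\[
|(R^0\lambda + B^0|\xi|^2)\ba| \geq C(|\lambda|+|\xi|^2)|\ba|
\]
uniformly in $\ba \in \BC^n$, from which both the invertibility of the finite-dimensional linear map $R^0\lambda + B^0|\xi|^2$ and the operator-norm bound \eqref{2.5} follow immediately with $m_2 = 1/C$. The main tool will be an energy-type estimate obtained by pairing with $\overline{\ba}$.

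For any $\ba \in \BC^n$ one has
\[
\langle (R^0\lambda + B^0|\xi|^2)\ba, \overline{\ba}\rangle = \lambda\,\alpha(\ba) + |\xi|^2\,\beta(\ba),
\]
where $\alpha(\ba) := \langle R^0\ba, \overline{\ba}\rangle$ and $\beta(\ba) := \langle B^0\ba, \overline{\ba}\rangle$. Since $R^0$ and $B^0$ are real and symmetric, both $\alpha(\ba)$ and $\beta(\ba)$ are real, and the coercivity in \eqref{2.3} gives $\alpha(\ba), \beta(\ba) \geq m_1|\ba|^2$.

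The key elementary fact I will invoke is that for every $\epsilon \in (0, \pi/2)$ there exists $c_\epsilon > 0$ such that
\[
|z + t| \geq c_\epsilon(|z|+t) \qquad \text{for all } z \in \Sigma_\epsilon \text{ and } t \geq 0,
\]
which holds because $\Sigma_\epsilon$ is uniformly separated from the negative real axis; one proves it by normalizing $|z|+t=1$ and invoking compactness of the resulting parameter set. Since $\alpha(\ba) > 0$, the product $\lambda\,\alpha(\ba)$ still lies in $\Sigma_\epsilon$, so applying the estimate with $z = \lambda\,\alpha(\ba)$ and $t = |\xi|^2\,\beta(\ba)$ yields
\[
|\langle (R^0\lambda + B^0|\xi|^2)\ba, \overline{\ba}\rangle| \geq c_\epsilon \bigl(|\lambda|\alpha(\ba) + |\xi|^2\beta(\ba)\bigr) \geq c_\epsilon m_1(|\lambda|+|\xi|^2)|\ba|^2.
\]
A Cauchy--Schwarz estimate on the left-hand side converts this into the sought pointwise bound $|(R^0\lambda + B^0|\xi|^2)\ba| \geq c_\epsilon m_1(|\lambda|+|\xi|^2)|\ba|$.

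I do not anticipate any substantive obstacle; the proof is essentially a half-page calculation. The only delicate point is the angular estimate on $\Sigma_\epsilon$, which relies crucially on $\epsilon > 0$ being strict so that the negative real half-line is excluded from the sector -- if one allowed $\epsilon = 0$ the constant $c_\epsilon$ would collapse to zero. The resulting constant $m_2$ depends only on $\epsilon$ and $m_1$, so it is in particular independent of the base point $x_0 \in \Omega$, exactly as asserted, because the coercivity constant $m_1$ in \eqref{1.3} is uniform in $x_0$.
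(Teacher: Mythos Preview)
Your argument is correct. Both you and the paper start from the same quadratic-form estimate, namely a lower bound on $|\langle (R^0\lambda + B^0|\xi|^2)\ba, \overline{\ba}\rangle|$ exploiting that $\lambda$ stays away from the negative real axis; the paper carries out explicitly with $\lambda = |\lambda|e^{i\theta}$ the trigonometric computation that you package as the sectorial inequality $|z+t| \geq c_\epsilon(|z|+t)$. The genuine difference is in how the quantitative bound \eqref{2.5} is extracted. The paper uses the quadratic-form estimate only to conclude $\det(R^0\lambda + B^0|\xi|^2) \neq 0$, and then runs a separate homogeneity-plus-compactness argument on the determinant and cofactor matrix (rescaling to $|\tilde\lambda|+|\tilde\xi|^2=1$) to obtain the inverse bound; this brings in the constant $M_0$ through the cofactor estimate. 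You instead pass directly from the quadratic-form lower bound to the pointwise bound $|(R^0\lambda+B^0|\xi|^2)\ba| \geq c_\epsilon m_1(|\lambda|+|\xi|^2)|\ba|$ via Cauchy--Schwarz, which immediately gives both invertibility and \eqref{2.5} in one stroke, with a constant $m_2 = (c_\epsilon m_1)^{-1}$ depending only on $\epsilon$ and $m_1$. Your route is shorter and yields a slightly sharper constant; the paper's determinant factorization, on the other hand, is reused later in Section~\ref{sec:3} (cf.\ \eqref{det:form}), so some of that machinery is not wasted effort in the paper's overall economy.
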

\pf Let $(\lambda, \xi) \in \Sigma_\epsilon\times(\BR^N\setminus\{0\})$. 
We take $\lambda=|\lambda|(\cos\theta+ i\sin\theta)$ and we compute
\eq{
|\lambda\langle R^0\ba, \overline{\ba} \rangle+|\xi|^2\langle B^0\ba, \overline{\ba} \rangle|^2=(\langle R^0\ba, \overline{\ba} \rangle|\lambda|\cos\theta+ |\xi|^2\langle B^0\ba, \overline{\ba} \rangle)^2+(\langle R^0\ba, \overline{\ba} \rangle|\lambda|\sin\theta)^2\\
|\langle R^0\ba, \overline{\ba} \rangle|^2|\lambda|^2+2|\lambda||\xi|^2 \langle R^0\ba, \overline{\ba} \rangle \langle B^0\ba, \overline{\ba} \rangle\cos\theta+|\xi|^4|\langle B^0\ba, \overline{\ba} \rangle|^2.
}
Because $|\theta|\leq\pi-\epsilon$ thus $\cos\theta\geq \cos(\pi-\epsilon)>-1$ and so

\eq{
&|\lambda\langle R^0\ba, \overline{\ba} \rangle+|\xi|^2\langle B^0\ba, \overline{\ba} \rangle|^2\\
&\quad\geq
|\langle R^0\ba, \overline{\ba} \rangle|^2|\lambda|^2-2|\lambda||\xi|^2 \langle R^0\ba, \overline{\ba} \rangle \langle B^0\ba, \overline{\ba} \rangle|\cos(\pi-\epsilon)|+|\xi|^4|\langle B^0\ba, \overline{\ba} \rangle|^2\\
&\quad= |\cos(\pi-\epsilon)|(|\lambda|\langle R^0\ba, \overline{\ba} \rangle-|\xi|^2\langle B^0\ba, \overline{\ba} \rangle)^2\\
&\qquad +(1-|\cos(\pi-\epsilon)|)[(|\lambda|\langle R^0\ba, \overline{\ba}\rangle)^2+(|\xi|^2\langle B^0\ba, \overline{\ba} \rangle)^2]\\
&\quad\geq (1-|\cos(\pi-\epsilon)|)m_1^2|\ba|^4(|\lambda|^2+|\xi|^4).
}
Note that $|\cos(\pi-\epsilon)|=|\cos\epsilon|$, and $$1-|\cos\epsilon|=\frac{|\sin\epsilon|^2}{1+|\cos\epsilon|}\geq \frac12 |\sin\epsilon|^2,$$
therefore
\begin{equation} \label{2.6} \begin{aligned}
|\langle (R^0\lambda + B^0|\xi|^2)\ba, \overline{\ba} \rangle | &\geq C|\sin(\epsilon)|\sqrt{|\lambda|^2+|\xi|^4}|\ba|^2.
\end{aligned}
\end{equation}
Thus, if $(R^0\lambda + B^0|\xi|^2)\ba = 0$, then 
$\ba=0$, which means that the matrix
$R^0\lambda + B^0|\xi|^2$ is injection, and so
$\det(R^0\lambda + B^0|\xi|^2)\not=0$.  Thus, 
\eq{\label{Rinv}
(R^0\lambda + B^0|\xi|^2)^{-1}=[\det(R^0\lambda + B^0|\xi|^2)]^{-1}
{\rm cof}(R^0\lambda + B^0|\xi|^2)
}
exists. We now prove \eqref{2.5}.  Let
$$\tilde\lambda = \frac{\lambda}{|\lambda| + |\xi|^2},
\quad \tilde \xi_j = \frac{\xi_j}{\sqrt{|\lambda|
+ |\xi|^2}},
$$
and then $\det(R^0\lambda + B^0|\xi|^2)
= (|\lambda| + |\xi|^2)^n
\det(R^0\tilde\lambda + B^0|\tilde\xi|^2)$.  
Since $(\tilde\lambda, \tilde\xi)$ ranges on
some compact set in $\BC\times\BR^N$
as $|\tilde\lambda|+|\tilde\xi|^2=1$ for   
$(\lambda, \xi) \in \Sigma_\epsilon
\times \BR^N\setminus \{0\}$, 
there exists $\tilde m_2$ such that 
$$|\det(R^0\tilde\lambda + B^0|\tilde\xi|^2)| 
\geq \tilde m_2.
$$
This $\tilde m_2$ depends also on $\epsilon$ and $M_0$, but is independent of 
$x_0 \in \Omega$ due to \eqref{1.3}. Thus, we have 
$$|\det(R^0\lambda + B^0|\xi|^2)|
\geq \tilde m_2(|\lambda| + |\xi|^2)^n.
$$
Since the cofactor matrix of $R^0\lambda + B^0|\xi|^2$
is bounded by some constant independent of $x_0$
times $(|\lambda| + |\xi|^2)^{n-1}$, we have
\eqref{2.5}.  This completes the proof of Lemma \ref{lem:2.1}.
\qed
\vskip0.5pc
One of the main tools in proving the existence of $\CR$ bounded solution
operators in $\BR^N$ 
is the following lemma due to
Denk and Schnaubelt \cite[Lemma~2.1]{denk-schnaubelt15}
and Enomoto and Shibata
\cite[Theorem~3.3]{ES1}.
\begin{lem} \label{lem:2.2}
Let $1 < q < \infty$ and let
$\Lambda$ be a set in $\BC$.  Let $m=m(\lambda, \xi)$ be a
function defined on $\Lambda \times (\BR^N\setminus\{0\})$
which is infinitely  differentiable with respect to
$\xi \in\BR^N\setminus\{0\}$ for each $\lambda \in \Lambda$.
Assume that
for any multi-index $\alpha \in \BN^N_0$
there exists a constant $C_\alpha$ depending on
$\alpha$ and $\Lambda$ such that
\begin{equation}\label{3.6} |\pd_\xi^\alpha m(\lambda, \xi)|
\leq C_\alpha |\xi|^{-|\alpha|}
\end{equation}
for any $(\lambda, \xi) \in \Lambda\times(\BR^N\setminus\{0\})$.
Let $K_\lambda$ be an operator defined by $K_\lambda f
= \CF^{-1}_\xi[m(\lambda, \xi)\CF f(\xi)]$.  Then, the family
of operators
$\{K_\lambda \mid \lambda \in \Lambda\}$ is $\CR$-bounded on
$\CL(L_q(\BR^N))$ and
\begin{equation}\label{3.6*}
\CR_{\CL(L_q(\BR^N))}(\{K_\lambda \mid \lambda \in \Lambda\})
\leq C_{q, N}\max_{|\alpha| \leq N+1} C_\alpha
\end{equation}
with some constant $C_{q, N}$ depending only on $q$ and $N$.
\end{lem}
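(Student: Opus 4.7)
The plan is to prove this as a parametrized Mikhlin-type multiplier theorem, with R-boundedness playing the role that norm boundedness plays in the classical version. The Mikhlin derivative bounds \eqref{3.6} are exactly the scale-invariant conditions that make Calder\'on-Zygmund theory applicable uniformly in $\lambda \in \Lambda$, so the strategy is to extract uniform kernel estimates first and then pass from uniform $L_q$-boundedness to R-boundedness by a Rademacher-averaging argument.

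First I would use a dyadic frequency decomposition: pick a smooth partition of unity $1 = \sum_{j \in \BZ} \varphi(2^{-j}\xi)$ with $\varphi$ supported in an annulus, write $m(\lambda, \xi) = \sum_j m(\lambda, \xi)\varphi(2^{-j}\xi)$, and estimate the corresponding convolution kernels $k_{\lambda, j}(x) = \CF^{-1}[m(\lambda, \cdot)\varphi(2^{-j}\cdot)](x)$. Integration by parts $|\alpha|$ times and the hypothesis \eqref{3.6} with $|\alpha|$ up to $N+1$ yield uniform pointwise bounds on $k_{\lambda, j}$ and $\nabla k_{\lambda, j}$, hence on the full kernel $k_\lambda = \CF^{-1}[m(\lambda, \cdot)]$ (in the sense of tempered distributions away from the origin), giving a uniform H\"ormander-type condition
$$\sup_{\lambda \in \Lambda} \sup_{y \neq 0} \int_{|z| \geq 2|y|} |k_\lambda(z-y) - k_\lambda(z)|\,dz \leq C \max_{|\alpha| \leq N+1} C_\alpha.$$
Combined with the uniform $L_2$-boundedness coming from $|m(\lambda, \xi)| \leq C_0$ and Plancherel, standard Calder\'on-Zygmund theory furnishes uniform $L_q$-bounds for $K_\lambda$ for any $1 < q < \infty$.

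Next I would upgrade uniform boundedness to R-boundedness. By Definition \ref{dfn:1.1} and Fubini, the claim
$$\Bigl\|\sum_{k=1}^m r_k K_{\lambda_k} f_k\Bigr\|_{L_p((0,1), L_q(\BR^N))} \leq C \Bigl\|\sum_{k=1}^m r_k f_k\Bigr\|_{L_p((0,1), L_q(\BR^N))}$$
is equivalent to a single-operator bound on the Bochner space $L_q(\BR^N; L_p(0,1))$. Writing the left-hand side as a Fourier multiplier in $x$ with operator-valued symbol $M(\xi): (a_k) \mapsto (m(\lambda_k, \xi) a_k)$ acting on Rademacher sums, the symbol $M$ inherits the Mikhlin bounds from \eqref{3.6} coordinate-by-coordinate, and hence uniformly as an operator on the $\ell^\infty$-type sum spanned by the Rademacher projections. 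The operator-valued Mikhlin theorem on UMD spaces (applicable since $L_q(\BR^N)$ is UMD for $1 < q < \infty$) then gives the desired bound with constant $C_{q,N} \max_{|\alpha| \leq N+1} C_\alpha$.

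The main obstacle is the last step: justifying the operator-valued Mikhlin bound without using the very Weis-type theorem one is trying to build towards. The cleanest resolution, which is the one carried out in \cite{ES1} and \cite{denk-schnaubelt15}, is to avoid a general operator-valued theorem and instead directly estimate the Rademacher sum by a scalar Calder\'on-Zygmund argument on the product space $\BR^N \times (0,1)$, using Kahane's contraction principle to replace the sequence $(m(\lambda_k, \xi))_k$ by a single scalar multiplier of comparable Mikhlin norm. The uniform kernel estimates from the first step then give the bound in \eqref{3.6*} directly, without any auxiliary multiplier theorem.
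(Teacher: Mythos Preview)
The paper does not prove Lemma~\ref{lem:2.2} at all: it is quoted as a tool and attributed to Denk--Schnaubelt \cite[Lemma~2.1]{denk-schnaubelt15} and Enomoto--Shibata \cite[Theorem~3.3]{ES1}, with no argument given. So there is nothing in the paper to compare your attempt against.

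That said, your outline is broadly the right shape for how those references proceed: uniform Mikhlin bounds $\Rightarrow$ uniform H\"ormander kernel estimates via a Littlewood--Paley decomposition $\Rightarrow$ uniform $L_q$-boundedness by scalar Calder\'on--Zygmund theory $\Rightarrow$ $\CR$-boundedness. The one place where your description goes off is the last step. Kahane's contraction principle compares Rademacher sums with \emph{pointwise} scalar coefficients $|a_k|\le|b_k|$; it does not let you ``replace the sequence $(m(\lambda_k,\xi))_k$ by a single scalar multiplier'', because those symbols depend on $\xi$. The actual passage from uniform boundedness to $\CR$-boundedness on $L_q$ goes through Khintchine's inequality: for $1<q<\infty$,
\[
\Bigl\|\sum_k r_k K_{\lambda_k}f_k\Bigr\|_{L_p((0,1),L_q)}
\ \sim\
\Bigl\|\Bigl(\sum_k |K_{\lambda_k}f_k|^2\Bigr)^{1/2}\Bigr\|_{L_q},
\]
so the $\CR$-bound reduces to the $\ell^2$-valued square-function estimate. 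This is exactly boundedness of the diagonal operator ${\rm diag}(K_{\lambda_1},\dots,K_{\lambda_m})$ on $L_q(\BR^N;\ell^2_m)$, whose kernel inherits the same H\"ormander constant from your first step; vector-valued Calder\'on--Zygmund theory (or equivalently the classical $\ell^2$-valued Mikhlin theorem) then gives the bound in \eqref{3.6*}. If you replace the ``Kahane contraction'' sentence with this Khintchine-plus-$\ell^2$-CZ argument, your sketch matches the standard proof.
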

By Lemma \ref{lem:2.1}, we can define a solution $\bv$
of Eq. \eqref{2.2} by
\begin{equation} \label{2.7}
\bv = \CF^{-1}[(R^0\lambda + B^0|\xi|^2)^{-1}\CF[\bff](\xi)],
\end{equation}
and so for any multi-index $\alpha \in \BN_0^N$ we have
\eq{\label{2.7a}
\partial^\alpha_\xi\bv= \CF^{-1}[(i\xi)^\alpha(R^0\lambda + B^0|\xi|^2)^{-1}\CF[\bff](\xi)].}
Differentiating $(R^0\lambda + B^0|\xi|^2)^{-1}$ expressed by the formula \eqref{Rinv} w.r.t. $\xi=(\xi^1,\ldots,\xi^N)$, and $\tau$, respectively and using \eqref{2.5} we can estimate
\begin{equation}\label{2.8}\begin{aligned}
|\pd_\xi^\alpha(R^0\lambda + B^0|\xi|^2)^{-1}|
& \leq C_\alpha(|\lambda| + |\xi|^2)^{-1}|\xi|^{-|\alpha|},
\\
|\pd_\xi^\alpha((\tau\pd_\tau)(R^0\lambda + B^0|\xi|^2)^{-1})|
& \leq C_\alpha(|\lambda| + |\xi|^2)^{-1}|\xi|^{-|\alpha|}
\end{aligned}\end{equation}
for any multi-index $\alpha \in \BN_0^N$,  
$\lambda=\gamma + i\tau\in \Sigma_\epsilon$ and
$\xi\in\BR^N\setminus\{0\}$. Applying Lemma \ref{lem:2.2} to the solution operator defined by \eqref{2.7} and \eqref{2.7a} for $\alpha=1,2$, 
we have the following theorem, which is the main result
of this subsection.
\begin{thm} \label{thm:2.1}
Let $1 < q < \infty$ and 
$0 < \epsilon < \pi/2$.
Then, there exists an operator family $\CT_0(\lambda)
\in {\rm Hol}\,(\Sigma_{\epsilon,\lambda_0}, 
\CL(L_q(\BR^N)^n, H^2_q(\BR^N)^n))$ such that
for any $\lambda \in \Sigma_{\epsilon,\lambda_0}$ and $\bff
\in L_q(\BR^N)^n$, $\bv = \CT_0(\lambda)\bff$
is a unique solution of Eq. \eqref{2.2}. 

Moreover, for any $\lambda_0 > 0$ there exists 
a constant $r_b$ independent of $x_0\in \Omega$ for
which
\begin{equation} \label{2.9}
\CR_{\CL(L_q(\BR^N)^n, H^{2-k}_q(\BR^n))}
(\{(\tau\pd_\tau)^\ell(\lambda^{k/2}\CT_0(\lambda)) 
\mid \lambda \in \Sigma_{\epsilon, \lambda_0}\})
\leq r_b
\end{equation}
for $k=0,1,2$ and $\ell=0,1$.
\end{thm}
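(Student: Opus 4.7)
The plan is to take $\CT_0(\lambda)$ to be the Fourier multiplier operator
\[
\CT_0(\lambda)\bff := \CF^{-1}\bigl[(R^0\lambda + B^0|\xi|^2)^{-1}\CF[\bff](\xi)\bigr],
\]
which is well defined and holomorphic in $\lambda\in\Sigma_{\epsilon,\lambda_0}$ by Lemma \ref{lem:2.1}, and which solves \eqref{2.2} by construction. Uniqueness is then immediate: applying $\CF$ to the homogeneous equation yields $(R^0\lambda + B^0|\xi|^2)\CF[\bv]=0$ on $\BR^N\setminus\{0\}$, and invertibility of the symbol forces $\CF[\bv]=0$, whence $\bv=0$.

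The heart of the matter is the $\CR$-bound \eqref{2.9}. To obtain it I would apply Lemma \ref{lem:2.2} componentwise to the Fourier symbols
\[
M^{k,\alpha}(\lambda,\xi) := \lambda^{k/2}(i\xi)^\alpha\bigl(R^0\lambda + B^0|\xi|^2\bigr)^{-1},
\qquad |\alpha|\leq 2-k,
\]
which are precisely the multipliers that represent $\lambda^{k/2}\pd_x^\alpha\bv$ through \eqref{2.7a}. The derivative estimates \eqref{2.5}--\eqref{2.8}, together with the elementary Young-type inequality $|\lambda|^{k/2}|\xi|^{|\alpha|}\leq(|\lambda|+|\xi|^2)^{(k+|\alpha|)/2}\leq |\lambda|+|\xi|^2$ (valid whenever $k+|\alpha|\leq 2$), and Leibniz' rule in $\xi$ yield
\[
|\pd_\xi^\beta M^{k,\alpha}(\lambda,\xi)|\leq C_{\alpha,\beta}|\xi|^{-|\beta|}
\quad\text{on $\Sigma_{\epsilon,\lambda_0}\times(\BR^N\setminus\{0\})$,}
\]
with constants $C_{\alpha,\beta}$ independent of $x_0\in\Omega$ (the constant $m_2$ of Lemma \ref{lem:2.1} is $x_0$-uniform thanks to \eqref{1.3}). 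Lemma \ref{lem:2.2} then provides an $\CR$-bound in $\CL(L_q(\BR^N))$ for the operators associated with $M^{k,\alpha}$, and summing over $|\alpha|\leq 2-k$ via Proposition \ref{prop:4.1}(a) delivers the $\ell=0$ case of \eqref{2.9}.

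For $\ell=1$ I would repeat the argument with $M^{k,\alpha}$ replaced by $(\tau\pd_\tau)M^{k,\alpha}$. Here $(\tau\pd_\tau)\lambda^{k/2}=i(k/2)\tau\lambda^{k/2-1}$ is bounded by a constant times $|\lambda|^{k/2}$ on $\Sigma_{\epsilon,\lambda_0}$, and the second line of \eqref{2.8} asserts that $(\tau\pd_\tau)$ applied to $(R^0\lambda + B^0|\xi|^2)^{-1}$ obeys the very same $\xi$-derivative estimate as the original symbol; the same Young-type reduction and Leibniz expansion therefore produce a bound of the form $|\pd_\xi^\beta(\cdot)|\leq C_{\alpha,\beta}|\xi|^{-|\beta|}$, and Lemma \ref{lem:2.2} applies verbatim. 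I do not anticipate any serious obstacle here: conceptually the proof is a symbol analysis followed by one invocation of Lemma \ref{lem:2.2}. The only delicate point is the Leibniz bookkeeping behind the $\xi$-derivative bounds on $M^{k,\alpha}$ and $(\tau\pd_\tau)M^{k,\alpha}$, together with the verification that all constants produced are genuinely uniform in the base point $x_0\in\Omega$, which ultimately rests on the $x_0$-uniformity of the ellipticity constant in \eqref{1.3}.
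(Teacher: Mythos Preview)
Your proposal is correct and follows essentially the same route as the paper: define $\CT_0(\lambda)$ as the Fourier multiplier with symbol $(R^0\lambda+B^0|\xi|^2)^{-1}$, verify the Mikhlin-type bounds \eqref{3.6} on the relevant symbols using \eqref{2.5}--\eqref{2.8}, and invoke Lemma \ref{lem:2.2}. You have simply spelled out in more detail (the symbols $M^{k,\alpha}$, the Young-type inequality, the Leibniz bookkeeping, and the $x_0$-uniformity) what the paper compresses into a single sentence referring to \eqref{2.7}, \eqref{2.7a} and Lemma \ref{lem:2.2}.
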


\subsection{Perturbed problem in $\BR^N$} \label{subsec:2.2}
In this subsection, we consider the case where the coefficients of the 
matrices 
$R$ and $B$ depend on $x$ variable. 
Let us fix $x_0 \in \Omega$. 
Let $M_1$ be a small positive number to be determined later. Let 
$d_0 > 0$ be a positive number such that 
\begin{equation} \label{2.2.1}
|R(x)-R(x_0)| \leq M_1,
\quad |B(x) - B(x_0)| \leq M_1
\end{equation}
for $x \in B_{d_0}(x_0)$, 
Let $\varphi$ be a function
in $C^\infty_0(\BR^N)$ which equals one for $x \in B_{d_0/2}(x_0)$
and zero for $x \not\in B_{2d_0/3}(x_0)$.  Let 
\begin{align*}
\tilde R(x) &= \varphi(x) R(x) 
+ (1-\varphi(x))R(x_0),\\
\tilde B(x) &= \varphi(x) B(x) + (1-\varphi(x))B(x_0),
\end{align*}
where $B(x)$ and $R(x)$ denote the functions extended to the whole space, 
we consider a perturbed problem:
\begin{equation} \label{p.2.1}
\lambda \tilde R \bv
-\dv(\tilde B\nabla \bv) = \bff
\quad\text{in $\BR^N$}.
\end{equation}
  In this subsection, 
we shall prove the following theorem.
\begin{thm} \label{thm:p.2} 
Assume that the coefficient
matrices $R$ and $B$ satisfy the conditions in \eqref{1.2}
with some exponent $r \in (N, \infty)$.   
Let $1 < q \leq r$ and $0 < \epsilon < \pi/2$.
Then, there exist $M_1 > 0$, $\lambda_0 > 0$ and 
an operator family $\CT_1(\lambda) \in {\rm Hol}\,
(\Sigma_{\epsilon, \lambda_0}, L(L_q(\BR^N)^n, H^2_q(\BR^N)^n))$
such that for any $\lambda \in \Sigma_{\epsilon, \lambda_0}$
and $\bff \in L_q(\BR^N)^n$, $\bv = \CT_1(\lambda)\bff$ is a
unique solution of Eq. \eqref{p.2.1} and 
$$\CR_{\CL(L_q(\BR^N)^n, H^{2-j}_q(\BR^N)^n)}
(\{(\tau\pd_\tau)^\ell(\lambda^{j/2}\CT_1(\lambda)) \mid
\lambda \in \Sigma_{\epsilon, \lambda_0}\} \leq 2r_b
$$
for $\ell=0,1$ and $j=0,1,2$ with some constant $r_b$ independent
of $x_0 \in \Omega$. 
{Here}, $\lambda_0$ and $r_b$ are the same constants as in
Theorem \ref{thm:2.1}.
\end{thm}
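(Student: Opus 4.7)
The plan is a perturbation argument built directly on Theorem~\ref{thm:2.1}. I rewrite \eqref{p.2.1} as the constant-coefficient equation
\[
\lambda R^0 \bv - B^0\Delta \bv = \bff - \lambda(\tilde R - R^0)\bv + \dv((\tilde B - B^0)\nabla \bv),
\]
make the ansatz $\bv = \CT_0(\lambda)\bg$ with $\CT_0(\lambda)$ from Theorem~\ref{thm:2.1}, and reduce the problem to solving $(I + U(\lambda))\bg = \bff$ in $L_q(\BR^N)^n$, where
\[
U(\lambda)\bg = \lambda(\tilde R - R^0)\CT_0(\lambda)\bg - \dv\bigl((\tilde B - B^0)\nabla \CT_0(\lambda)\bg\bigr).
\]
Once I establish that $\{U(\lambda):\lambda\in\Sigma_{\epsilon,\lambda_0}\}$ is $\CR$-bounded on $\CL(L_q(\BR^N)^n)$ with bound strictly less than $1/2$ for $M_1$ small and $\lambda_0$ large, the operator $I+U(\lambda)$ is invertible by a Neumann series that is itself a holomorphic $\CR$-bounded family; setting $\CT_1(\lambda) := \CT_0(\lambda)(I+U(\lambda))^{-1}$ and applying parts (a) and (b) of Proposition~\ref{prop:4.1} to compose the $\CR$-bounds from \eqref{2.9} with those of $(I+U(\lambda))^{-1}$ yields the target estimates on $(\tau\partial_\tau)^\ell(\lambda^{j/2}\CT_1(\lambda))$ with constant $2r_b$.

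To estimate $U(\lambda)$, I split it into
\[
U_1(\lambda)\bg = \lambda(\tilde R - R^0)\CT_0(\lambda)\bg, \quad U_2(\lambda)\bg = -(\tilde B - B^0)\Delta\CT_0(\lambda)\bg, \quad U_3(\lambda)\bg = -\nabla(\tilde B - B^0)\cdot\nabla\CT_0(\lambda)\bg.
\]
For $U_1$ and $U_2$, the matrix factors $\tilde R - R^0$ and $\tilde B - B^0$ are bounded pointwise by $M_1$ on $\BR^N$ (by \eqref{2.2.1} together with the cutoff construction, which freezes them outside $B_{d_0}(x_0)$). Pointwise multiplication by such a matrix is bounded on $L_q$ with operator norm $\le M_1$; since it is a single operator its $\CR$-bound agrees with its norm. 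Composing with $\lambda\CT_0(\lambda):L_q\to L_q$, respectively with $\CT_0(\lambda):L_q\to H^2_q$ for the $\Delta$ component, via Proposition~\ref{prop:4.1}(b) and the bounds \eqref{2.9}, yields $\CR_{\CL(L_q)}(\{U_i(\lambda)\})\le M_1 r_b$ for $i=1,2$.

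The delicate term is $U_3$: the factor $\nabla(\tilde B - B^0) = (\nabla\varphi)(B - B^0) + \varphi\,\nabla B$ lies only in $L_r(\BR^N)$ uniformly in $x_0$, with norm controlled by $M_0$ and $d_0$ but not by $M_1$. This is where the hypothesis $r>N$ enters. Fix $\alpha\in(N/r,1)$ and define $s$ by $1/s = 1/q - 1/r$. H\"older's inequality gives $\|U_3(\lambda)\bg\|_{L_q}\le \|\nabla(\tilde B - B^0)\|_{L_r}\|\nabla\CT_0(\lambda)\bg\|_{L_s}$, and the Sobolev embedding $H^\alpha_q(\BR^N)\hookrightarrow L_s(\BR^N)$ holds because $\alpha>N/r$. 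Direct symbol analysis via Lemma~\ref{lem:2.2} (or complex interpolation between the $\CR$-bounds of $\lambda^{1/2}\nabla\CT_0(\lambda):L_q\to L_q$ and $\nabla\CT_0(\lambda):L_q\to H^1_q$ extracted from \eqref{2.9}) shows that $|\lambda|^{(1-\alpha)/2}\nabla\CT_0(\lambda):L_q\to H^\alpha_q$ is $\CR$-bounded uniformly in $\lambda\in\Sigma_{\epsilon,\lambda_0}$. Writing $\nabla\CT_0(\lambda) = |\lambda|^{-(1-\alpha)/2}\cdot|\lambda|^{(1-\alpha)/2}\nabla\CT_0(\lambda)$ and absorbing the scalar factor $|\lambda|^{-(1-\alpha)/2}\le \lambda_0^{-(1-\alpha)/2}$ by Proposition~\ref{prop:4.1}(c) yields $\CR_{\CL(L_q)}(\{U_3(\lambda)\})\le C\lambda_0^{-(1-\alpha)/2}$, arbitrarily small as $\lambda_0\to\infty$. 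The $(\tau\partial_\tau)$-derivatives are treated identically using the $\ell=1$ bounds in \eqref{2.9}. Choosing $M_1$ small first and then $\lambda_0$ large produces the required contraction estimate, and this combination of H\"older, Sobolev, and interpolation of $\CR$-bounds is the main technical obstacle. Uniqueness follows from the same machinery: given $\bv\in H^2_q(\BR^N)^n$ solving the homogeneous equation, set $\bg := -\lambda(\tilde R - R^0)\bv + \dv((\tilde B - B^0)\nabla\bv)$; the H\"older--Sobolev argument above shows $\bg\in L_q(\BR^N)^n$, the uniqueness assertion in Theorem~\ref{thm:2.1} gives $\bv=\CT_0(\lambda)\bg$, and substituting back yields $(I+U(\lambda))\bg=0$, hence $\bg=0$ and $\bv=0$.
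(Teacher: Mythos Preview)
Your argument is correct and follows the same Neumann-series perturbation scheme as the paper: rewrite \eqref{p.2.1} as a constant-coefficient problem plus remainder, insert $\bv=\CT_0(\lambda)\bg$, and invert $I+U(\lambda)$ once its $\CR$-bound is below $1/2$. The differences are in two technical details. For the lower-order term $U_3$, the paper does not interpolate at the fractional level; instead it invokes the ready-made product estimate \eqref{sob:1}, which for $g=\varphi(B-B^0)$ and $f=\nabla\CT_0(\lambda)\bff$ yields a bound of the form $C(M_1+\alpha)\|\nabla^2\CT_0(\lambda)\bff\|_{L_q}+C_\alpha\|\nabla\CT_0(\lambda)\bff\|_{L_q}$, so the smallness comes from $M_1+\alpha$ on the top-order part and from $|\lambda|^{-1/2}$ on the first-order part, using only the integer-order $\CR$-bounds already in \eqref{2.9}. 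Your route via H\"older, the embedding $H^\alpha_q\hookrightarrow L_s$, and interpolated $\CR$-bounds on $|\lambda|^{(1-\alpha)/2}\nabla\CT_0(\lambda)$ is valid but heavier than necessary. For uniqueness, the paper simply appeals to solvability of the dual problem, whereas your bootstrap (show $\bg\in L_q$, apply uniqueness from Theorem~\ref{thm:2.1}, deduce $(I+U(\lambda))\bg=0$) is a clean self-contained alternative.
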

\begin{proof} To construct an $\CR$-bounded solution operator for
Eq. \eqref{p.2.1}, we consider the equation:
\begin{equation} \label{eq:t1a}
\lambda R(x_0)\bv
-B(x_0)\Delta \bv + \bR\bv = \bff 
\quad\text{in $\BR^N$}.
\end{equation}
{Above} we have set 
$$\bR\bv = \lambda \varphi(x)(R(x)-R(x_0))\bv 
-\dv(\varphi(x)(B(x)-B(x_0))\nabla \bv).
$$
Let $\CT_0(\lambda)$ be the $\CR$-bounded solution operator given in
Theorem \ref{thm:2.1}, and we set $\bv=\CT_0(\lambda)\bff$ in \eqref{eq:t1a}. Then, we have
{\begin{equation} \label{eq:t1b}
\lambda R(x_0)\CT_0(\lambda)\bff
-B(x_0)\Delta \CT_0(\lambda)\bff + \bR\CT_0(\lambda)\bff = (\bI + \CR(\lambda)) \bff
\quad\text{in $\BR^N$},
\end{equation}}
{where}
$$\CR(\lambda)\bff = 
\lambda\varphi(x)(R(x)-R(x_0))
\CT_0(\lambda)\bff 
-\dv(\varphi(x)(B(x)-B(x_0))
\nabla \CT_0(\lambda)\bff).
$$
Applying \eqref{sob:1} and using the 
conditions \eqref{1.2}, we have
\begin{align*}
&\|\dv(\varphi(\cdot)(B(\cdot) -B(x_0))
\nabla \CT_0(\lambda)\bff)\|_{L_q(\BR^N)}\\
&
\quad\leq CM_0(M_1 + \alpha)\|\nabla^2\CT_0(\lambda)\bff\|_{L_q(\BR^N)}
+ C_\alpha M_0\|\nabla \CT_0(\lambda)\bff\|_{L_q(\BR^N)}.
\end{align*}
By \eqref{1.2}, we also have
$$\|\lambda\varphi(\cdot)(R(\cdot) - R(x_0))
\CT_0(\lambda)\bff\|_{L_q(\BR^N)}
\leq CM_0M_1|\lambda|\|\CT_0(\lambda)\bff\|_{L_q(\BR^N)}.
$$
Using Theorem \ref{thm:2.1} and 
Proposition \ref{prop:4.1}, we have
$$\CR_{\CL(L_q(\BR^N)^n)}(\{(\tau\pd_\tau)^\ell\CR(\lambda) \mid
\lambda \in \Sigma_{\epsilon, \lambda_1}\})
\leq C\{M_0(M_1 + \alpha) + C_\alpha M_0\lambda_1^{-1/2}\}r_b.
$$
for any $\lambda_1 \geq \lambda_0$. 
Thus, choosing $M_1$ and $\alpha$ so small that 
$CM_0r_bM_1< 1/8$, $CM_0r_b\alpha < 1/8$ and choosing $\lambda_0 > 0$
so large that $CC_\alpha M_0r_b\lambda_0^{-1/2} < 1/4$,
we have 
$$\CR_{\CL(L_q(\BR^N)^n)}(\{(\tau\pd_\tau)^\ell\CR(\lambda) \mid
\lambda \in \Sigma_{\epsilon, \lambda_0}\})
\leq 1/2.
$$
Thus, we can construct the inverse operator $(\bI + \CR(\lambda))^{-1}
= \sum_{j=0}^\infty [-\CR(\lambda)]^j$. Then, taking $\tilde \bff = (\bI + \CR(\lambda))\bff$ 
in \eqref{eq:t1b} we see that  
$$
\bv = \CT_1(\lambda)\bff = \CT_0(\lambda)(\bI + \CR(\lambda))^{-1} \bff
$$ 
is a required $\CR$ bounded solution operator with $\CR$ bound: 
$$\CR_{\CL(L_q(\BR^N)^n, H^{2-j}_q(\BR^N)^n)}
(\{(\tau\pd_\tau)^\ell(\lambda^{j/2}\CT_1(\lambda)) \mid
\lambda \in \Sigma_{\epsilon, \lambda_0}\}) \leq 2r_b
$$
for $\ell=0,1$ and $j=0,1,2$.   
The uniqueness of solutions follows from the existence of 
solutions of the dual problem.  This completes the proof 
of Theorem \ref{thm:p.2}.

\end{proof}

\section{Model problem in the half-space} \label{sec:3}
Let $x_0$ be any point on $\Gamma$ and set $R^1 = R(x_0)$
and $B^1= B(x_0)$. 
In this section, we consider problem:
\begin{equation}\label{3.1}
\lambda  R^1\bv - \dv(
B^1\nabla \bv)  = \bff \quad
\text{in $\BR^N_+$}, \quad
B^1(\nabla \bv \cdot \bn_0)  = \bg
\quad\text{on $\BR^N_0$},
\end{equation}
{where}
$$\BR^N_+ = \{x=(x_1, \ldots, x_N) \mid x_N > 0\}, 
\enskip
\BR^N_0 = \{x=(x_1, \ldots, x_N) \mid x_N = 0\},
$$
and $\bn_0 = (0, \ldots, 0, -1)^\top$. 
First, we consider the case where $\bg=0$. 

\begin{thm}\label{thm:3.1}
Let $1 < q < \infty$ and 
$0 < \epsilon < \pi/2$.
Then, there exists an operator family $\CT_2(\lambda)
\in {\rm Hol}\,(\Sigma_{\epsilon}, 
\CL(L_q(\BR^N)^n, H^2_q(\BR^N)^n))$ such that
for any $\lambda \in \Sigma_\epsilon$ and $\bff
\in L_q(\BR^N_+)^n$, $\bv = \CT_2(\lambda)\bff$
is a unique solution of Eq. \eqref{3.1} with 
 $g_k=0$ $(k=1, \ldots, n)$.

Moreover, for any $\lambda_0 > 0$ there exists 
a constant $r_b$ independent of $x_0\in \Gamma$ for
which
\begin{equation} \label{3.2}
\CR_{\CL(L_q(\BR^N_+)^n, H^{2-k}_q(\BR^N_+)^n)}
(\{(\tau\pd_\tau)^\ell(\lambda^{k/2}\CT_2(\lambda)) 
\mid \lambda \in \Sigma_{\epsilon, \lambda_0}\})
\leq r_b
\end{equation}
for $k=0,1,2$ and $\ell=0,1$.
\end{thm}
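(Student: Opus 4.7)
The plan is to reduce Eq.\,\eqref{3.1} with $\bg=0$ to the whole space problem solved in Theorem \ref{thm:2.1} via an even reflection in $x_N$. First, observe that the boundary condition $B^1(\nabla\bv\cdot\bn_0)=0$ with $\bn_0=(0,\ldots,0,-1)^\top$ reads $-B^1\pd_N\bv|_{x_N=0}=0$; since $B^1$ is positive definite by \eqref{1.3}, it is invertible and the condition collapses to the plain Neumann condition $\pd_N\bv|_{x_N=0}=0$. This is the boundary condition naturally preserved by even reflection across $x_N=0$.

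Given $\bff\in L_q(\BR^N_+)^n$, define the even extension operator $E\bff(x',x_N)=\bff(x', |x_N|)$, which is bounded from $L_q(\BR^N_+)^n$ into $L_q(\BR^N)^n$. Let $\CT_0(\lambda)$ be the $\CR$-bounded whole-space solution operator from Theorem \ref{thm:2.1} associated with the constant matrices $R^1$, $B^1$ (which satisfy \eqref{2.3} by \eqref{1.2}, \eqref{1.3}), and set
\eqh{
\CT_2(\lambda)\bff := \bigl(\CT_0(\lambda)E\bff\bigr)\big|_{\BR^N_+}.
}
Write $\tilde\bv=\CT_0(\lambda)E\bff\in H^2_q(\BR^N)^n$. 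Since $R^1$, $B^1$ are constant and $\Delta$ commutes with the reflection $r(x',x_N)=(x',-x_N)$, the function $\tilde\bv\circ r$ solves the same whole-space equation with right-hand side $(E\bff)\circ r=E\bff$. The uniqueness statement of Theorem \ref{thm:2.1} therefore forces $\tilde\bv=\tilde\bv\circ r$, i.e.\ $\tilde\bv$ is even in $x_N$. Consequently $\pd_N\tilde\bv$ is odd in $x_N$ and, since $\pd_N\tilde\bv\in H^1_q(\BR^N)^n$ possesses a trace on $\BR^N_0$, that trace vanishes. Thus $\CT_2(\lambda)\bff$ is a solution of \eqref{3.1} with $\bg=0$ and, by the same reflection/uniqueness argument applied to any homogeneous solution on $\BR^N_+$ (extended evenly to $\BR^N$), it is the unique such solution.

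For the $\CR$-bound \eqref{3.2}, factor $\lambda^{k/2}(\tau\pd_\tau)^\ell\CT_2(\lambda) = \rho_+\circ\bigl(\lambda^{k/2}(\tau\pd_\tau)^\ell\CT_0(\lambda)\bigr)\circ E$, where $\rho_+$ denotes the (bounded) restriction $H^{2-k}_q(\BR^N)^n\to H^{2-k}_q(\BR^N_+)^n$ and $E$ is the (bounded) even extension. Both $\rho_+$ and $E$ are independent of $\lambda$, so composing the $\CR$-bounded family from Theorem \ref{thm:2.1} with them on either side (part (b) of Proposition \ref{prop:4.1}, with single-element families $\{\rho_+\}$, $\{E\}$ which are trivially $\CR$-bounded by their operator norms) yields \eqref{3.2} with a constant $r_b$ independent of $x_0\in\Gamma$, inheriting the uniformity from \eqref{2.9}. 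The only genuine verification needed is that even reflection and the constant-coefficient full-space problem are compatible via uniqueness; this step is routine once one notices that the Neumann condition is exactly the reflection-invariant one, and no Fourier multiplier analysis on the half-space is required. The more delicate inhomogeneous case $\bg\neq 0$ will be treated separately in the sequel.
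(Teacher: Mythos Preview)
Your proposal is correct and takes a genuinely different route from the paper. Both arguments begin identically: extend $\bff$ evenly across $x_N=0$ and apply the whole-space solution operator $\CT_0(\lambda)$ from Theorem~\ref{thm:2.1}. The divergence is in how the Neumann condition $\pd_N\tilde\bv|_{x_N=0}=0$ is verified. The paper carries out an explicit Fourier analysis: it expands $\det(R^1\lambda+B^1|\xi|^2)$ as a product of factors $(\xi_N\pm i\omega_j)^{n_j}$, evaluates the inverse Fourier integral in $\xi_N$ by residues, and then checks through a careful combinatorial calculation (Lemmas after \eqref{3.5c} and \eqref{eq:df}) that the contributions from poles at $i\omega_j$ and $-i\omega_j$ cancel pairwise. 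Your argument bypasses all of this by observing that the constant-coefficient operator $\lambda R^1 - B^1\Delta$ commutes with the reflection $r$, so $\tilde\bv\circ r$ solves the same whole-space equation with the same (even) right-hand side, and uniqueness in Theorem~\ref{thm:2.1} forces $\tilde\bv$ to be even; hence $\pd_N\tilde\bv$ is odd with zero trace.

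What each approach buys: yours is dramatically shorter, conceptually transparent, and robust---it works for any constant-coefficient operator invariant under $x_N\mapsto -x_N$, without knowing anything about the zero structure of the symbol's determinant. The paper's residue computation is self-contained (it does not invoke the uniqueness part of Theorem~\ref{thm:2.1}) and makes the solution formula explicit on the Fourier side, which can be useful when one later needs to manipulate the kernel directly; but for the present purpose that extra information is not used. Your treatment of the $\CR$-bound via Proposition~\ref{prop:4.1}(b) with the fixed operators $E$ and $\rho_+$ is also cleaner than the paper's implicit handling, and your uniqueness sketch (even extension of an $H^2_q$ half-space solution with $\pd_N\bv|_{x_N=0}=0$ lies in $H^2_q(\BR^N)$ and solves the homogeneous whole-space problem) fills in a point the paper leaves tacit.
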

\begin{proof}
Given $\bff = (f_1, \ldots, f_n)^\top$ in the right side of Eq. \eqref{3.1},
let $f^e_j$ be an even extension of $f_j$ to $x_N < 0$ defined by letting
$$f^e_j(x) = \begin{cases} f(x', x_N) \quad&\text{for $x_N > 0$}, \\
f(x', -x_N)\quad&\text{for $x_N < 0$},
\end{cases} $$
where $x' = (x_1, \ldots, x_{N-1})$. Set $\bF^e = (f_1^e, \ldots, f_n^e)^\top$
and we consider the whole space problem: 
\begin{equation}\label{3.3}
\lambda R^1 \bU  - \dv(
B^1\nabla \bU)  = \bF^e \quad
\text{in $\BR^N$}.
\end{equation}
Let 
$$\CT_2(\lambda)\bF^e(x) = \CF^{-1}[(R^1\lambda + B^1|\xi|^2)^{-1}\hat \bF^e(\xi)](x).
$$
Obviously, $\bU = \CT_2(\lambda)\bF^e$ satisfies Eq. \eqref{3.2}, 
and so in particular
$$\lambda R^1 \bU - \dv(
B^1\nabla \bU)  = \bff \quad
\text{in $\BR^N_+$}.
$$
Moreover, by Theorem \ref{thm:2.1}, $\CT_2(\lambda)$ has 
the same $\CR$-bound as in \eqref{2.9}. Thus, our task is to prove that
\begin{equation} \label{3.4}
\frac{\pd}{\pd x_N}\bU|_{x_N=0} = 0.
\end{equation}
Each term of $\CT_2(\lambda)\bF^e$ has a form: 
$$I_{kl}(x)=\frac{1}{(2\pi)^N}\int_{\BR^N}e^{ix\cdot\xi}\frac{\lambda^{n-1-\ell}
|\xi|^{2\ell}}{\det(R^1\lambda + B^1|\xi|^2)}\hat f_k^e(\xi)\,d\xi
$$
for some $k \in \{1 \ldots n\}, \; \ell \in \{1 \ldots n-1\}$. Thus, 
\begin{align*}
\pd_NI_{kl}|_{x_N=0} = \frac{1}{(2\pi)^N}\int_{\BR^N}e^{ix'\cdot\xi'}
\frac{\lambda^{n-1-\ell}|\xi|^{2\ell}i\xi_N\hat f_k^e(\xi)}
{\det(R^1\lambda + B^1|\xi|^2)}\,d\xi.
\end{align*}
Applying the Fourier transform with respect to $x'$, we have 
\begin{align*}
{\mathcal F}^{-1}_{x'}(\pd_NI_{kl}|_{x_N=0})(\xi') & = 
\frac{1}{2\pi}\int^\infty_{-\infty} {\mathcal F}^{-1}_{x'} \left[ \int_{\BR^{N-1}}e^{ix'\cdot\xi'}
\frac{\lambda^{n-1-\ell}|\xi|^{2\ell}i\xi_N\hat f_k^e(\xi)}
{\det(R^1\lambda + B^1|\xi|^2)}\,d\xi' \right] \\
& =
\frac{1}{2\pi}\int^\infty_{-\infty}
\frac{\lambda^{n-1-\ell}|\xi|^{2\ell}
i\xi_N\hat f^e(\xi)}{\det(R^1\lambda + B^2|\xi|^2)}\,d\xi_N
\\
& = \frac{1}{2\pi}\int^\infty_{-\infty}
\frac{\lambda^{n-1-\ell}|\xi|^{2\ell}i\xi_N}
{\det(R^1\lambda + B^2|\xi|^2)}
\int^\infty_0(e^{-iy_N\xi_N} + e^{iy_N\xi_N})
\hat f(\xi', y_N)\,dy_N \\
& = \lambda^{n-1-\ell}\int^\infty_0\hat f(\xi', y_N)\,dy_N
\frac{1}{2\pi}\int^\infty_{-\infty}
\frac{|\xi|^{2\ell}i\xi_N(e^{-iy_N\xi_N} + e^{iy_N\xi_N})}
{\det(R^1\lambda + B^1|\xi|^2)}\,d\xi_N.
\end{align*}
Thus, in order to show \eqref{3.4} it is enough to prove that
\begin{equation} \label{3.5}
\frac{1}{2\pi}\int^\infty_{-\infty}
\frac{|\xi|^{2\ell}i\xi_N(e^{-iy_N\xi_N} + e^{iy_N\xi_N})}
{\det(R^1 \lambda + B^1|\xi|^2)}\,d\xi_N
= 0.
\end{equation}
We can write  
\begin{equation} \label{3.5b}
\det(R^1\lambda + B^1|\xi|^2) 
= a_0|\xi|^{2n} + \sum_{j=1}^n a_j\lambda^j
|\xi|^{2(n-j)}. 
\end{equation}
Let $t=|\xi|^2$, then \eqref{3.5b} rewrites as
$$a_0t^n + \sum_{j=1}^n a_j\lambda^j
t^{n-j}
= a_0\prod_{j=1}^m(t+k_j|\lambda|)^{n_j},  
$$
where $m$ and $n_j$ are constants depending on $\lambda$ for which
$n = \sum_{j=1}^mn_j$ and $k_j$ are functions with respect to
$\lambda/|\lambda|$ such that $k_j\not=k_\ell$ for 
$j\not=\ell$.  In view of  \eqref{2.5}, 
$a_0\prod_{j=1}^m(t+k_j|\lambda|)^{n_j}\not=0$ for $t \geq 0$ and $\lambda \in \Sigma_\epsilon$, 
and so $k_j\not\in (-\infty, 0)$
for $\lambda \in \Sigma_\epsilon$. Thus, we have
\begin{equation} \label{det:form}
\det(R^1\lambda + B^1|\xi|^2) 
= a_0\prod_{j=1}^n(\xi_n^2 + |\xi'|^2 + k_j|\lambda|)^{n_j}
= a_0\prod_{j=1}^m(\xi_n + i\omega_j)^{n_j}(\xi_n-i\omega_j)^{n_j}
\end{equation}
with $\omega_j = \sqrt{|\xi'|^2 + k_j|\lambda|}$ where we take
${\rm Re}\, \omega_j > 0$. We rewrite the lhs of \eqref{3.5}:  
\begin{lem}
We have
\begin{equation} \label{3.5c}
\frac{1}{2\pi}\int^\infty_{-\infty}
\frac{|\xi|^{2\ell}i\xi_N(e^{-iy_N\xi_N} + e^{iy_N\xi_N})}
{\det(R^1 
\lambda + B^1|\xi|^2)}\,d\xi_N \\
=\sum_{j=1}^m \frac{1}{(n_j-1)!}J_j
\end{equation}
with 
\eq{\label{3.5d}
J_j &= 
\bigl(\frac{\pd}{\pd \xi_N}\bigr)^{n_j-1}\frac{f_j(\xi_N^2)i\xi_Ne^{iy_N\xi_N}}
{(\xi_N+i\omega_j)^{n_j}}\Bigl|_{\xi_N=i\omega_j}
- \bigl(\frac{\pd}{\pd \xi_N}\bigr)^{n_j-1}
\frac{f_j(\xi_N^2)i\xi_Ne^{-iy_N\xi_N}}
{(\xi_N-i\omega_j)^{n_j}}\Bigl|_{\xi_N=-i\omega_j} \\
&:=J_j^+ - J_j^-,
}
where we have set
$$
f_j(y)=\frac{[|\xi'|^2+y]^l}{\prod_{\ell\not=j}(y+|\xi'|^2
+k_\ell|\lambda|)^{n_\ell}}.
$$
\end{lem}
\begin{proof}  
The proof follows by direct computation of the integral on the l.h.s. of \eqref{3.5c} as a limit of curve integrals of a complex function which are computed using residue theorem. Denoting the 
integrand by $f(\xi_N)$ we have
\begin{equation}
\int^\infty_{-\infty}f(\xi_N)\,d\xi_N = 
{\rm lim}_{R \to \infty} \int_{\gamma^+_R}f(\xi_N)\,d\xi_N 
- {\rm lim}_{R \to \infty} \int_{L^+_R}f(\xi_N)\,d\xi_N
\end{equation}
where 
$$
L^+_R = \{ z \in \BC: {\rm Im}z>0, \; |z|=R \}, \quad 
\gamma^+_R = [-R,R] \times \{{\rm Im}z=0\} \cup L^+_R.
$$
Writing $\xi_N=a+bi$ we easily verify that the integral over $L^+_R$ vanishes as $R \to \infty$, and therefore by residue 
theorem the integral in the lhs of \eqref{3.5c} will be equal 
to sum of residua of the integrand on the upper complex halfplane. In order to compute the residua notice that by \eqref{det:form} we have 
\begin{equation*}
f(\xi_N)=\frac{|\xi|^2 i\xi_N e^{iy_N}}{\Pi_{j=1}^m(\xi_N-i\omega_j)^{n_j}},    
\end{equation*}
therefore in a neighbourhood of $\xi_N=i\omega_j$ we have
\begin{equation*}
f(\xi_N)=\frac{g_j(\xi_N)}{(\xi_N-i\omega_j)^{n_j}}, 
\end{equation*}
where 
\begin{equation*}
g_j(\xi_N) = \frac{f_j(\xi_N^2)i\xi_Ne^{iy_N\xi_N}}{(\xi_N+i\omega_j)^{n_j}}    
\end{equation*}
is holomorphic, which implies the form of $J_j^+$ in \eqref{3.5d}. The part with $e^{-iy_N\xi_N}$ is calculated in the same way extending the integral to a curve contained in lower complex hyperplane leading to the form of $J_j^-$. 
\end{proof}
It is easy to observe that 
\begin{lem} \label{L:df} We have the following identities
\begin{align} \label{eq:df}
\pd_N^{2\ell-1}f_j(\xi_N^2) 
& = a_0^{(2\ell-1)}f_j^{(2\ell-1)}(\xi_N^2)\xi_N^{2\ell-1} 
+ a^{(2\ell-1)}_1f^{(2\ell-2)}_j(\xi_N^2)\xi_N^{2\ell-3}
+ \cdots \nonumber \\ 
&\quad+ 
a^{(2\ell-1)}_{\ell-2}f^{(\ell+1)}_j(\xi_N^2)\xi_N^3 
+ a^{(2\ell-1)}_{\ell-1}f_j^{(\ell)}(\xi_N^2)\xi_N,\\
\pd_N^{2\ell}f_j(\xi_N^2) 
& = a_0^{(2\ell)}f_j^{(2\ell)}(\xi_N^2)\xi_N^{2\ell} 
+ a^{(2\ell)}_1f^{(2\ell-1)}_j(\xi_N^2)\xi_N^{2\ell-2}
+ \cdots \nonumber \\
&\quad+ 
a^{(2\ell)}_{\ell-1}f^{(\ell+1)}_j(\xi_N^2)\xi_N^2 
+ a^{(2\ell)}_{\ell}f_j^{(\ell)}(\xi_N^2), \nonumber \\
\end{align}
with some coefficients $a^{(k)}_m$, where $f^{(\ell)}_j = 
\pd^\ell f_j/ \pd y_j$.
\end{lem}
\begin{proof} It is enough to observe that 
$$
\de_{\xi^N} [f^{2l-k}(\xi_N^2)\xi_N^{2(l-k)-1}]=a_{kl}\xi_N^{2(l-k-1)}f^{(2l-k)}(\xi_N^2)+2f^{(2l-k+1)}(\xi_N^2)\xi_N^{2(l-k)} 
$$ 
and 
$$
\de_{\xi^N} [f^{2l-k}(\xi_N^2)\xi_N^{2(l-k)}]=b_{kl}\xi_N^{2(l-k)-1}f^{(2l-k)}(\xi_N^2)+2f^{(2l-k+1)}(\xi_N^2)\xi_N^{2(l-k)+1} 
$$
for some coefficients $a_{kl},b_{kl}$, therefore \eqref{eq:df} follows by induction.
\end{proof}
By Lemma \ref{L:df}, there exist
some functions $g_j^{(\ell)}$ for which 
\begin{equation} \label{3.6a} 
\pd_N^{2\ell-1}f_j(\xi_N^2)|_{\xi_N = \pm i\omega_j}
= \pm ig^{(2\ell-1)}_j(\omega_j^2)\omega_j, 
\quad
\pd_N^{2\ell}f_j(\xi_N^2)|_{\xi_N = \pm i\omega_j}
= g^{(2\ell)}_j(\omega_j^2).
\end{equation}
Notice that the uppercase index $g^{(l)}$ does not denote differentiation contrarily to $f^{(l)}$.
By 
{Leibniz} rule, we have 
\begin{align*}
\lr{\frac{\pd}{\pd \xi_N}}^{n_j-1}&
\frac{f_j(\xi_N^2)i\xi_Ne^{\pm iy_N\xi_N}}
{(\xi_N\pm i\omega_j)^{n_j}}\Bigl|_{\xi_N=\pm i\omega_j}=\\
&= \sum_{k=0}^{n_j-1}\de_{\xi_N}^k(i\xi_N)\de_{\xi_N}^{n_j-1-k}\frac{f_j(\xi_N^2)i\xi_Ne^{\pm iy_N\xi_N}}
{(\xi_N\pm i\omega_j)^{n_j}}\Bigl|_{\xi_N=\pm i\omega_j}
\\
&= i\xi_N \bigl( \frac{\de}{\de \xi_N} \bigr)^{n_j-1} 
\frac{f_j(\xi_N^2)e^{\pm iy_N\xi_N}}
{(\xi_N\pm i\omega_j)^{n_j}}\Bigl|_{\xi_N=\pm i\omega_j}+ i \bigl( \frac{\de}{\de \xi_N} \bigr)^{n_j-2}\frac{f_j(\xi_N^2)e^{\pm iy_N\xi_N}}
{(\xi_N\pm i\omega_j)^{n_j}}\Bigl|_{\xi_N=\pm i\omega_j}\\ 
&=
\sum_{k_1+k_2+k_3=n_j-1} C_{k_1,k_2,k_3}^{(n_j-1)} L_1^\pm + 
\sum_{k_1+k_2+k_3=n_j-2} C_{k_1,k_2,k_3}^{(n_j-2)}iL_2^\pm,
\end{align*}
where
\begin{align*}
L_1^\pm&=i\xi_N
\bigl(\frac{\pd}{\pd\xi_N}\bigr)^{k_1}f_j(\xi_N^2)
\bigl(\frac{\pd}{\pd\xi_N}\bigr)^{k_2}e^{\pm i\xi_Ny_N}
\bigl(\frac{\pd}{\pd\xi_N}\bigr)^{k_3}(\xi_N\pm i\omega_j)^{-n_j}
\Bigl|_{\xi_N=\pm i\omega_j},\\
L_2^\pm& =
\bigl(\frac{\pd}{\pd\xi_N}\bigr)^{k_1}f_j(\xi_N^2)
\bigl(\frac{\pd}{\pd\xi_N}\bigr)^{k_2}e^{\pm i\xi_Ny_N}
\bigl(\frac{\pd}{\pd\xi_N}\bigr)^{k_3}(\xi_N\pm i\omega_j)^{-n_j}
\Bigl|_{\xi_N=\pm i\omega_j}
\end{align*}
with some permutation numbers $C_{k_1,k_2,k_3}^{(n_j-1)}$
and $C_{k_1,k_2,k_3}^{(n_j-2)}$. 
Now our goal is to show
\begin{equation} \label{3.6b}
L_i^+=L_i^-, \quad i=1,2.
\end{equation}
Then by \eqref{3.5d} we have $J_j^+=J_j^-$, and therefore \eqref{3.5} holds due to \eqref{3.5c}.
Let us start with observing that 
\begin{equation} \label{3.6c}
\de_{\xi_N}^{k_2} \left. e^{\pm i\xi_N y_N}\right|_{\xi_N = \pm i\omega_j} 
= (\pm i y_N)^{k_2} \left. e^{\pm i\xi_N y_N}\right|_{\xi_N = \pm i\omega_j}
= (\pm i y_N)^{k_2} e^{-\omega_j y_N}
\end{equation}
and 
\begin{equation} \label{3.6d}
\de_{\xi_N}^{k_3}(\xi_N \pm i\omega_j)^{-n_j}=d_{k3}(\xi_N \pm i\omega_j)^{-n_j-k_3},
\end{equation}
where $d_{k3} = (-n_j)(-n_j-1)\cdots(-n_j-k_3+1)$. 
In order to show that $L_1^+=L_1^-$ we assume $k_1+k_2+k_3=n_j-1$ and consider 
first the case when $k_1$ is odd. Using \eqref{3.6a},\eqref{3.6c} and \eqref{3.6d} we get
\eq{\label{3.6e}
L_1^\pm & = (\mp \omega_j)(\pm ig_j^{(k_1)}(\omega_j^2)\omega_j)(\pm iy_N )^{k_2}e^{-\omega_j y_N}2^{-n_j-k_3}d_{k_3}(\pm i\omega_j)^{-n_j-k_3}  \\
& = (\pm i)^{k_2}(\pm i)^{-k_1-k_2-2k_3-1} (- i) g_j^{(k_1)}(\omega_j^2)\omega_j^2e^{-\omega_j y_N}(2\omega_j)^{-n_j-k_3}d_{k_3}y_N^{k_2}  \\
& = - i^{-2k_3-k_1} g_j^{(k_1)}(\omega_j^2)\omega_j^2e^{-\omega_j y_N}(2\omega_j)^{-n_j-k_3}d_{k_3}y_N^{k_2} 
}
where we have used that $(\pm 1)^{2k_3+k_1+1}=1$ because $k_1$ is odd. 
In the same manner, when $k_1$ is even, we have 
\eq{ \label{3.6f}
L_1^\pm & = \mp \omega_j g_j^{(k_1)}(\omega_j^2)(\pm iy_N )^{k_2}e^{-\omega_j y_N}2^{-n_j-k_3}d_{k_3}(\pm i\omega_j)^{-n_j-k_3} \\
& = (\pm i)^{k_2}(\pm i)^{-k_1-k_2-2k_3}(\mp \omega_j)(\pm \omega_j){^{-1}} g_j^{(2l)}(\omega_j^2)\omega_j y_N^{k_2}e^{-\omega_j y_N}2^{-n_j-k_3}d_{k_3}\omega_j^{-k_1-k_2-2k_3}\\
& = -i^{-k_1-2k_3}\omega_j g_j^{(2l)}(\omega_j^2)\omega_j y_N^{k_2}e^{-\omega_j y_N}2^{-n_j-k_3}d_{k_3}\omega_j^{-k_1-k_2-2k_3}, 
}
since this time $(\pm 1)^{k_1+2k_3} = 1$ because $k_1$ is even. 

In order to show that $L_2^+=L_2^-$ we assume $n_j=k_1+k_2+k_3+2$
and again consider first $k_1$ odd. Then  
\eq{ \label{3.6g}
L_2^\pm & = \pm i g_j^{(k_1)}(\omega_j^2)\omega_j (\pm iy_N)^{k_2}e^{-\omega_j y_N}d_{k_3}2^{-n_j-k_3}(\pm i \omega_j)^{-n_j-k_3}\\
& = (\pm i)^{1+k_2}(\pm i)^{-k_1-k_2-2k_3-2}g_j^{(k_1)}(\omega_j^2)\omega_j y_N^{k_2}e^{-\omega_j y_N}d_{k_3}(2\omega_j)^{-n_j-k_3}\\
& = i^{-k_1-2k_3-1}g_j^{(k_1)}(\omega_j^2)\omega_j y_N^{k_2}e^{-\omega_j y_N}d_{k_3}(2\omega_j)^{-n_j-k_3},
}
where we have used $(\pm 1)^{-k_1-2k_3-1} = 1$ because $k_1$ is odd. 
When $k_1$ is even, we have 
\eq{ \label{3.6h}
L_2^\pm & = g_j^{(k_1)}(\omega_j^2)(\pm iy_N)^{k_2}e^{-\omega_j y_N}d_{k_3}2^{-n_j-k_3}(\pm i \omega_j)^{-n_j-k_3}\\
& = (\pm i)^{k_2}(\pm i)^{-k_1-k_2-2k_3-2}g_j^{(k_1)}(\omega_j^2)y_N^{k_2}e^{-\omega_j y_N}d_{k_3}(2\omega_j)^{-n_j-k_3}\\ 
& = i^{-k_1-2k_3-2}g_j^{(2l)}(\omega_j^2)y_N^{k_2}e^{-\omega_j y_N}d_{k_3}(2\omega_j)^{-n_j-k_3},
}
since $(\pm 1)^{-k_1-2k_3-2} = 1$ as $k_1$ is even. 
From \eqref{3.6e}-\eqref{3.6h} we conclude \eqref{3.6b}, which leads to \eqref{3.5} as explained above. This completes the proof
of 
{Theorem \ref{thm:3.1}}.
\end{proof}
We now prove the existence of an $\CR$ bounded solution operator 
for Eq. \eqref{3.1}.
\begin{cor}\label{cor:2.1}Let $1 < q < \infty$ and 
$0 < \epsilon < \pi/2$. 
Let $X_q(\BR^N_+)$ and $\CX_q(\BR^N_+)$ be spaces  defined by 
replacing $\Omega$ by $\BR^N_+$ in Theorem \ref{thm:1.2}.
Then, there exists an operator family $\CT_3(\lambda)
\in {\rm Hol}\,(\Sigma_{\epsilon}, 
\CL(\CX_q(\BR^N_+), H^2_q(\BR^N)^n))$ such that
 $\bv = \CT_3(\lambda)(\bff, \lambda^{1/2}\bg, \bg)$
is a unique solution of Eq. \eqref{3.1}
for any $\lambda \in \Sigma_\epsilon$ and $(\bff, \bg)
\in X_q(\BR^N_+)^n$. 

Moreover, for any $\lambda_0 > 0$ there exists 
a constant $r_b$ independent of $x_0\in \Gamma$ for
which
\begin{equation} \label{3.7}
\CR_{\CL(\CX_q(\BR^N_+), H^{2-k}_q(\BR^N+))}
(\{(\tau\pd_\tau)^\ell(\lambda^{k/2}\CT_3(\lambda)) 
\mid \lambda \in \Sigma_{\epsilon, \lambda_0}\})
\leq r_b
\end{equation}
for $k=0,1,2$ and $\ell=0,1$.
\end{cor}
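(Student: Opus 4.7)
The strategy is to reduce~\eqref{3.1} to the homogeneous-boundary case already handled in Theorem~\ref{thm:3.1} via an $\CR$-bounded lifting of the data~$\bg$. Concretely, I would construct an operator family
$$
\CE(\lambda)\in{\rm Hol}\bigl(\Sigma_{\epsilon,\lambda_0},\,\CL(L_q(\BR^N_+)^n\times H^1_q(\BR^N_+)^n,\,H^2_q(\BR^N_+)^n)\bigr),
$$
such that $\bw:=\CE(\lambda)(\lambda^{1/2}\bg,\bg)$ satisfies the boundary identity
$B^1(\nabla\bw\cdot\bn_0)|_{x_N=0}=\bg|_{x_N=0}$ together with the $\CR$-bound
$$
\CR_{\CL(L_q(\BR^N_+)^n\times H^1_q(\BR^N_+)^n,\,H^{2-k}_q(\BR^N_+)^n)}\bigl(\{(\tau\pd_\tau)^\ell\lambda^{k/2}\CE(\lambda):\lambda\in\Sigma_{\epsilon,\lambda_0}\}\bigr)\leq C
$$
for $k=0,1,2$ and $\ell=0,1$, with $C$ independent of $x_0\in\Gamma$. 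The natural ansatz uses the partial Fourier transform in $x'$ and a symbol of the form $M(\xi',\lambda)^{-1}e^{-M(\xi',\lambda)x_N}$ with $M(\xi',\lambda):=(|\xi'|^2+\lambda)^{1/2}$ (branch $\Re M>0$ on $\Sigma_\epsilon$, which is well-defined because $|\xi'|^2+\lambda\notin(-\infty,0]$ for $\lambda\in\Sigma_\epsilon$). An integration by parts in $y_N$ against $e^{-My_N}$ rewrites the trace $\widehat\bg(\xi',0)$ in terms of integrals against $\widehat\bg(\xi',y_N)$ (contributing through $\|\lambda^{1/2}\bg\|_{L_q}$, after $\lambda=M^2-|\xi'|^2$ is rearranged) and $\widehat{\pd_N\bg}(\xi',y_N)$ (contributing through $\|\bg\|_{H^1_q}$). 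This matches exactly the pair of norms constituting the $\CX_q$-norm on the input.

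With $\bw$ in hand I set $\tilde\bv:=\bv-\bw$, which satisfies
$$
\lambda R^1\tilde\bv-\dv(B^1\nabla\tilde\bv)=\bff-\lambda R^1\bw+\dv(B^1\nabla\bw)=:\tilde\bff\quad\text{in }\BR^N_+,\qquad B^1(\nabla\tilde\bv\cdot\bn_0)|_{x_N=0}=0.
$$
By Theorem~\ref{thm:3.1}, $\tilde\bv=\CT_2(\lambda)\tilde\bff$, so the candidate solution operator is
$$
\CT_3(\lambda)(F_1,F_2,F_3):=\CT_2(\lambda)\bigl[F_1-\lambda R^1\CE(\lambda)(F_2,F_3)+\dv(B^1\nabla\CE(\lambda)(F_2,F_3))\bigr]+\CE(\lambda)(F_2,F_3).
$$
The $\CR$-bound~\eqref{3.7} follows by combining the bound on $\CT_2$ from Theorem~\ref{thm:3.1} with that on $\CE$ via the sum and composition rules of Proposition~\ref{prop:4.1}: the $\lambda R^1\CE$ contribution uses the $k=2$ estimate on $\CE$, while $\dv(B^1\nabla\CE)$ uses the $k=0$ estimate at the $H^2_q$-level. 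Uniqueness is standard: integrating the homogeneous equation against $\overline{\bv}$ and using the symmetry and strict positivity of $R^1$, $B^1$ yields
$$
\lambda\int_{\BR^N_+}\langle R^1\bv,\overline{\bv}\rangle\,dx+\int_{\BR^N_+}\langle B^1\nabla\bv,\nabla\overline{\bv}\rangle\,dx=0,
$$
which forces $\bv=0$ for every $\lambda\in\Sigma_\epsilon$.

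The main technical obstacle is the construction and $\CR$-bounded estimation of $\CE(\lambda)$. The symbols $(\tau\pd_\tau)^\ell\lambda^{k/2}M^{-1}e^{-Mx_N}$ and their $y_N$-integral counterparts must be shown to satisfy Mikhlin-type pointwise bounds in $\xi'\in\BR^{N-1}\setminus\{0\}$ uniformly in $\lambda\in\Sigma_{\epsilon,\lambda_0}$, so that the $(N-1)$-dimensional analogue of Lemma~\ref{lem:2.2} applies. The $y_N$-integration against the exponentially decaying kernel $e^{-M(x_N+y_N)}$ has to be organised via Minkowski/Young-type inequalities so as to preserve the $\CR$-bound, rather than merely boundedness. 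The remaining work is routine bookkeeping, treating separately the $\nabla^2\bw$, $\lambda^{1/2}\nabla\bw$ and $\lambda\bw$ components of the $H^2_q$-norm of $\bw$ together with their $(\tau\pd_\tau)$-derivatives, and verifying independence of the constants from the base point $x_0\in\Gamma$ (which follows from \eqref{1.3} applied to $R^1$, $B^1$).
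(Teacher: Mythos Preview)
Your approach is essentially the same as the paper's: construct an explicit lifting of the boundary data via the partial Fourier transform and the scalar Poisson kernel $M^{-1}e^{-Mx_N}$ with $M=(\lambda+|\xi'|^2)^{1/2}$, rewrite the trace by the Volevich trick (your ``integration by parts in $y_N$''), and then subtract to reduce to Theorem~\ref{thm:3.1}. The paper does exactly this, setting $\bh=(B^1)^{-1}\bg$ so that the auxiliary function solves $\lambda\bw-\Delta\bw=0$ with $\pd_N\bw|_{x_N=0}=-\bh$; you should make the factor $(B^1)^{-1}$ explicit in your ansatz so that the boundary identity $B^1(\nabla\bw\cdot\bn_0)=\bg$ actually holds. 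For the $\CR$-bound of the lifting the paper simply cites \cite[Sect.~5]{SS1}, while you outline the symbol estimates directly; both are fine.

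One point to correct: your uniqueness argument via the energy identity
\[
\lambda\int_{\BR^N_+}\langle R^1\bv,\overline{\bv}\rangle\,dx+\int_{\BR^N_+}\langle B^1\nabla\bv,\nabla\overline{\bv}\rangle\,dx=0
\]
only makes sense when $\bv\in H^1_2(\BR^N_+)$, which is not implied by $\bv\in H^2_q(\BR^N_+)$ on the unbounded half-space for $q\neq 2$. The paper instead invokes existence for the dual problem (with exponent $q'$), which gives uniqueness for all $1<q<\infty$ by the standard pairing argument.
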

\begin{proof}
Notice that $\nabla\bv\cdot\bn_0 = -\pd_N\bv$.
Let $\bh = (B^1)^{-1}\bg$, 
{and consider} the boundary value problem:
\begin{equation} \label{3.8}
\lambda\bw - \Delta\bw = 0 \quad\text{in $\BR^N_+$},\quad
\pd_N\bw = -\bh.\ \quad\text{on $\BR^N_0$}.
\end{equation}
To define a solution operator of Eq. \eqref{3.8}, we use the partial 
Fourier transform $\CF_{x'}$ and its inverse transform $\CF_{\xi'}^{-1}$ defined in \eqref{pft}. 
Applying the partial Fourier transform to 
Eq. \eqref{3.8}, we have
$$((\lambda+|\xi'|^2) - \pd_N^2)\CF_{x'}[\bw](\xi', x_N) =0
\quad\text{on $(0, \infty)$}, \quad 
\pd_N\CF_{x'}[\bw](\xi', x_N)|_{x_N=0} = -\CF_{x'}[\bh](\xi', 0).
$$
Thus, we have
$$\CF_{x'}[\bw](\xi', x_N) = \frac{e^{-\sqrt{\lambda + |\xi'|^2}x_N}}
{\sqrt{\lambda + |\xi'|^2}}\CF_{x'}[\bh](\xi', 0).
$$
And so, we define a solution operator $U(\lambda)$ by
setting
$$U(\lambda)\bh = 
\CF_{\xi'}^{-1}\Bigl[\frac{e^{-\sqrt{\lambda+ |\xi'|^2}x_N}}{
\sqrt{\lambda + |\xi'|^2}}\CF_{x'}[\bh](\xi', 0)\Bigr](x').
$$
By the Volevich trick: 
$$f(x_N)g(0) = -\int^\infty_0\pd_Nf\big((x_N+y_N)g(y_N)\big)\,dy_n,
$$
with 
$$
f(x_N) = \frac{e^{-\sqrt{\lambda+ |\xi'|^2}x_N}}{
\sqrt{\lambda + |\xi'|^2}}, \quad g(y_N) = \CF_{x'}[\bh](\xi', y_N)
$$
we write $U(\lambda)\bh$ as 
\begin{align*}
&U(\lambda)\bh \\
&= -\int^\infty_0\CF_{\xi'}^{-1}\Bigl[
\frac{e^{-\sqrt{\lambda+|\xi'|^2}(x_N+y_N)}}{\sqrt{\lambda+|\xi'|^2}}
(\pd_N\CF_{x'}[\bh](\xi', y_N) - \sqrt{\lambda + |\xi'|^2}
\CF_{x'}[\bh](\xi', y_N))\Bigr](x')\,dy_N \\
&= -\int^\infty_0\CF_{\xi'}^{-1}\Bigl[
\frac{e^{-\sqrt{\lambda+|\xi'|^2}(x_N+y_N)}}{\sqrt{\lambda+|\xi'|^2}}
\Bigl(\CF_{x'}[\pd_N\bh](\xi', y_N) 
- \frac{\lambda^{1/2}}{\sqrt{\lambda + |\xi'|^2}}
\CF_{x'}[\lambda^{1/2}\bh](\xi', y_N) \\
&\phantom{-\int^\infty_0\CF_{\xi'}^{-1}\Bigl[
\frac{e^{-\sqrt{\lambda+|\xi'|^2}(x_N+y_N)}}{\sqrt{\lambda+|\xi'|^2}}
\Bigl(}
+ \sum_{j=1}^{N-1}\frac{i\xi_j}{\sqrt{\lambda + |\xi'|^2}}
\CF_{x'}[\pd_j\bh](\xi', y_N)\Bigr)\Bigr](x')\,dy_N.
\end{align*}
Let $\CY_q(\BR^N_+) = \{(F_2, F_3) \mid F_2 \in L_q(\BR^N_+)^N, \enskip
F_3 \in H^1_q(\BR^N_+)^N\}$.  And then, 
we define an operator $\CU(\lambda)$ acting on 
$(F_2, F_3) \in \CY_q(\BR^N_+)$ by 
letting 
\begin{align*}
&\CU(\lambda)(F_2, F_3) \\
& = -\int^\infty_0\CF_{\xi'}^{-1}\Bigl[
\frac{e^{-\sqrt{\lambda+|\xi'|^2}(x_N+y_N)}}{\sqrt{\lambda+|\xi'|^2}}
\Bigl(\CF_{x'}[\pd_NF_3](\xi', y_N) 
- \frac{\lambda^{1/2}}{\sqrt{\lambda + |\xi'|^2}}
\CF_{x'}[F_2](\xi', y_N) \\
&\phantom{-\int^\infty_0\CF_{\xi'}^{-1}\Bigl[
\frac{e^{-\sqrt{\lambda+|\xi'|^2}(x_N+y_N)}}{\sqrt{\lambda+|\xi'|^2}}
\Bigl(}
+ \sum_{j=1}^{N-1}\frac{i\xi_j}{\sqrt{\lambda + |\xi'|^2}}
\CF_{x'}[\pd_jF_3](\xi', y_N)\Bigr)\Bigr](x')\,dy_N,
\end{align*}
and then we have
$$U(\lambda)\bh = \CU(\lambda)(\lambda^{1/2}\bh, \bh).
$$
Moreover,  using the same argument as in \cite[Sect. 5]{SS1}, we see that
\begin{equation}\label{3.9}
\CR_{(\CL_q(\BR^N_+), H^{2-j}_q(\BR^N)^n)}
(\{(\tau\pd_\tau)^\ell(\lambda^{j/2}\CU(\lambda)) \mid
\lambda \in \Sigma_{\epsilon, \lambda_0}\}) \leq r_b
\end{equation}
for $\ell=0,1$ and $j=0,1,2$, where $r_b$ is a constant depending on
$\epsilon$, $\lambda_0 >0$, $M_0$ and $m_1$, but independent of $x_0
\in \Gamma$. 

Let $\CT_2(\lambda)$ be the operator given in Theorem \ref{thm:2.1}. 
Letting $\bF = \lambda R^1 U(\lambda)\bh - \dv(B^1\nabla U(\lambda)\bh)$, 
and setting $\bv = \CT_2(\lambda)(\bff- \bF) + U_0(\lambda)\bh$ with
$\bh = (B^1)^{-1}\bg$, 
we see that $\bv$ is a unique solution of 
Eq. \eqref{3.1}.  
{The uniqueness} follows from the existence of
solutions of the dual problem. Thus, combining Theorem \ref{thm:2.1}
and \eqref{3.9}, we have Corollary \ref{cor:2.1}.  This completes the proof.
\end{proof}
\section{Analysis in a bent half-space}\label{sec:4}
Let $\Phi$ be a diffeomorphism of $C^1$ class on $\BR^N$ and $\Phi^{-1}$
the inverse of $\Phi$.  We assume that 
$\nabla\Phi = \CA + \CB(x)$ and $\nabla\Phi^{-1} = \CA^{-1} + \CB_{-1}(y)$,
where $\CA$ is an orthogonal matrix with constant coefficients, 
$\CA^{-1}$ is the inverse matrix of $\CA$, and $\CB(x)$ and $\CB_{-1}(y)$ are 
matrices of $C^0(\BR^N)$ functions satisfying the conditions:
\begin{equation}\label{4.1}
\|(\CB,\CB_{-1})\|_{L_\infty(\BR^N)} \leq M_1,
\quad\|\nabla(\CB, \CB_{-1})\|_{L_r(\BR^N)} \leq C_K.
\end{equation}
{In the above formula} $r$ is an exponent such that $N < r < \infty$ and $C_K$ is a constant
depending on the constants $K$, $L_1$ and $L_2$ appearing in
Definition \ref{dfn:1.2}. We choose $M_1$ small enough eventually,
and so we may assume that $0 < M_1 \leq 1 \leq C_K$ without loss of 
generality. Let 
$$\Omega_+ = \Phi(\BR^N_+) = \{y = \Phi(x) \mid x \in \BR^N_+\},
\quad
\Gamma_+ = \Phi(\BR^N_0) = \{y = \Phi(x) \mid x \in \BR^N_0\}.
$$
Let $\bn_+$ be the unit outer normal to $\Gamma_+$ and 
let $\pd_{\bn_+} = \bn_+\cdot\nabla$.  Let 
$y_0$ be any point of $\Gamma_+$ and we fix it.  We assume in this 
section that there exist a positive number $d_0$ for which
\begin{equation}\label{4.2.1}
|R(y)-R(y_0)| \leq M_1,
\quad |B(y) - B(y_0)| \leq M_1
\end{equation}
for any $y \in B_{d_0}(y_0)$.  Moreover, let $M_2$ be a number for which
\begin{equation} \label{4.2.2}
\|\nabla (R, B)\|_{L_r(\BR^N)} \leq M_2.
\end{equation}
Note that since $R$ and $B$ are the extensions of functions defined on $\Omega$, due to \eqref{1.2}, we may take $M_2=M_2(M_0)$.
We may assume that 
\begin{equation} \label{4.2.3}
C_K \leq M_2.  
\end{equation}
Let $\varphi(y)$ be a function in 
$C^\infty(\BR^N)$ such that 
\begin{equation} \label{4.2.4}
\varphi(y)=\left\{ \begin{array}{lr}
1, & y \in B_{d_0/3}(y_0),\\
0, & y \not\in B_{2d_0/3}(y_0).
\end{array}\right.
\end{equation}
We define 
$$\tilde R(y) = \varphi(y)R(y)
+ (1-\varphi(y))R(y_0), \quad 
\tilde B(y) = \varphi(y)B(y)
+ (1-\varphi(y))B(y_0).
$$
 In this section, we consider the 
following resolvent problem:
\begin{equation}\label{4.3}
\lambda  \tilde R \bv 
-\dv( \tilde B\nabla \bv) = \bff \quad \text{in $\Omega_+$},
\quad 
\tilde B(\nabla \bv \cdot \bn_+)
= \bg\quad\text{on $\Gamma_+$}.
\end{equation}
We shall prove the following theorem. 
\begin{thm}\label{thm:4.1} Let $1 < q \leq r$.  Let 
$X_q(\Omega_+)$ and $\CX_q(\Omega_+)$ be the spaces defined by
replacing $\Omega$ by $\Omega_+$ in Theorem \ref{thm:1.2}.
Then, there exist a small number $M_1 > 0$, a constant
$\lambda_0 > 0$ and an operator family $\CT_+(\lambda)$ with
$$\CT_+(\lambda) \in {\rm Hol}\,(\Sigma_{\epsilon, \lambda_0}, 
\CL(\CX_q(\Omega_+), H^2_q(\Omega)^n))$$
such that 
{such that if \eqref{4.2.1} is satisfied then} for any $\lambda \in \Sigma_{\epsilon, \lambda_0}$ and 
$(\bff, \bg) \in X_q(\Omega_+)$, $\bv = 
\CT_+(\lambda)(\bff, \lambda^{1/2}\bg, \bg)$ is a unique solution
of Eq. \eqref{4.3}, and 
$$\CR_{\CL(\CX_q(\Omega_+), H^{2-j}_q(\Omega_+)^n)}
(\{(\tau\pd_\tau)^\ell(\lambda^{j/2}\CT_+(\lambda)) \mid
\lambda \in \Sigma_{\epsilon, \lambda_0}\}) \leq r_b
$$
for $\ell=0,1$ and $j=0,1,2$ with some constant $r_b$
independent of $M_1$ and $M_2$, 
{where  $M_2$ is from \eqref{4.2.2}.}
\end{thm}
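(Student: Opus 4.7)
The plan is to pull the problem back to the flat half-space via $\Phi$, treat the result as a perturbation of the constant-coefficient problem handled in Corollary \ref{cor:2.1}, and invert the perturbation by a Neumann series after choosing $M_1$ small and $\lambda_0$ large. Let $\bw(x)=\bv(\Phi(x))$ and set $R^1=R(y_0)$, $B^1=B(y_0)$. Using $\nabla\Phi=\CA+\CB(x)$ and $\nabla\Phi^{-1}=\CA^{-1}+\CB_{-1}(y)$, the operators $\dv(\tilde B\nabla\cdot)$ and the conormal derivative $\tilde B(\nabla\cdot\cdot\bn_+)$ transform into
\[
\dv(B^1\nabla\bw) + \CP_1(\bw), \qquad B^1(\nabla\bw\cdot\bn_0)+\CP_2(\bw)
\]
on $\BR^N_+$ and $\BR^N_0$, respectively, where $\CP_1$ collects (i) the difference $\tilde B-B^1=\varphi(y)(B(y)-B(y_0))$, (ii) the Jacobian perturbation $\CB_{-1}$, and (iii) the first-order terms produced by the derivatives of $\CB_{-1}$ and of $\tilde B$; similarly $\CP_2$ collects the analogous boundary pieces coming from $\CB$, $\CB_{-1}$ and $\tilde B-B^1$. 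The pulled-back resolvent problem then reads
\begin{equation}\label{eq:pert}
\lambda R^1\bw-\dv(B^1\nabla\bw)+\lambda(\tilde R-R^1)\bw + \CP_1(\bw)=\tilde\bff,\qquad B^1(\nabla\bw\cdot\bn_0)+\CP_2(\bw)=\tilde\bg,
\end{equation}
with $\tilde\bff=\bff\circ\Phi$, $\tilde\bg=\bg\circ\Phi$.

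Next, let $\CT_3(\lambda)$ denote the $\CR$-bounded solution operator from Corollary \ref{cor:2.1}, so that $\bw=\CT_3(\lambda)(F_1,F_2,F_3)$ solves the unperturbed problem with data $(F_1,\lambda^{1/2}\tilde\bg,\tilde\bg)$. Plugging $\bw=\CT_3(\lambda)(F_1,F_2,F_3)$ into \eqref{eq:pert} with $F_1=\tilde\bff$ and $(F_2,F_3)=(\lambda^{1/2}\tilde\bg,\tilde\bg)$, the perturbation terms produce operators acting on $(F_1,F_2,F_3)\in\CX_q(\BR^N_+)$. Combining Proposition \ref{prop:4.1}(a)(b) with the Sobolev--Hölder inequality \eqref{sob:1}, the Hölder bound $|\tilde B-B^1|+|\tilde R-R^1|\le CM_1$ on the support of $\varphi$ guaranteed by \eqref{4.2.1}, the smallness \eqref{4.1} of $(\CB,\CB_{-1})$, and the bounds $\|\nabla(\CB,\CB_{-1})\|_{L_r}\le C_K\le M_2$, $\|\nabla(\tilde R,\tilde B)\|_{L_r}\le CM_2$ from \eqref{4.2.2}--\eqref{4.2.3}, the perturbation operator $\CR_\Phi(\lambda)$ admits the bound
\[
\CR_{\CL(\CX_q(\BR^N_+))}\bigl(\{(\tau\pd_\tau)^\ell\CR_\Phi(\lambda):\lambda\in\Sigma_{\epsilon,\lambda_1}\}\bigr)\le C\bigl(M_1+\alpha+C_\alpha M_2\lambda_1^{-1/2}\bigr)r_b
\]
for any $\lambda_1\ge\lambda_0$ and any $\alpha\in(0,1)$, uniformly in $y_0\in\Gamma_+$. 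The first-order coefficients coming from $\nabla\CB_{-1}$ and $\nabla\tilde B$ are precisely the terms that \eqref{sob:1} is designed to handle, splitting them into a factor $\alpha\|\nabla^2\bw\|_{L_q}$ plus a lower-order piece $C_\alpha\|\nabla\bw\|_{L_q}$; the latter is absorbed by a half-power of $\lambda$ thanks to the $\lambda^{1/2}$-multiplier in the $\CR$-bound \eqref{3.7} applied with $k=1$. Choosing $\alpha$ and $M_1$ small and then $\lambda_0$ large, the $\CR$-bound of $\CR_\Phi(\lambda)$ is at most $1/2$, so $\bI+\CR_\Phi(\lambda)$ is invertible with the Neumann series. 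Setting
\[
\CT_+(\lambda)(F_1,F_2,F_3)=\bigl(\CT_3(\lambda)(\bI+\CR_\Phi(\lambda))^{-1}(F_1,F_2,F_3)\bigr)\circ\Phi^{-1},
\]
and using that $\Phi$ is a bilipschitz $C^1$ diffeomorphism so that the change of variables induces isomorphisms on $L_q$, $H^1_q$, $H^2_q$ preserving $\CR$-boundedness (Proposition \ref{prop:4.1}(b)), we obtain the claimed solution operator with $\CR$-bound $2r_b$ (relabeled $r_b$). Uniqueness follows, as in Theorem \ref{thm:p.2} and Corollary \ref{cor:2.1}, by constructing solutions to the dual problem on $\Omega_+$, which is again a bent half-space problem of the same type.

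The main obstacle is the quantitative control of $\CR_\Phi(\lambda)$. The only terms that cannot be made small purely by shrinking $M_1$ are those containing $\nabla\CB_{-1}$ or $\nabla\tilde B$, whose $L_r$-norms are merely bounded (not small) by $M_2$; these must be handled by the Sobolev trick \eqref{sob:1} (splitting into $\alpha\|\nabla^2\bw\|_{L_q}+C_\alpha\|\nabla\bw\|_{L_q}$) combined with the extra half-power of $\lambda$ available on $\nabla\bw$, which is the reason why one needs $\lambda_0$ sufficiently large in a way that depends on $M_2$ through $C_\alpha$ but the final $\CR$-bound $r_b$ remains independent of $M_1,M_2$ and of $y_0\in\Gamma_+$.
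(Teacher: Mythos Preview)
Your proposal is correct and follows essentially the same route as the paper: pull back to $\BR^N_+$ via $\Phi$, write the transformed problem as a constant-coefficient half-space problem plus a perturbation controlled by $M_1$, $\alpha$, and $M_2\lambda_0^{-1/2}$ (using \eqref{sob:1} for the first-order terms with merely $L_r$-bounded coefficients), invert the perturbation by a Neumann series after choosing $M_1,\alpha$ small and $\lambda_0$ large, and push forward via $\Phi^{-1}$. The paper is slightly more explicit about one bookkeeping point you glossed over: the perturbation naturally lands in $X_q(\BR^N_+)$ as a pair $(\bF(\bu),\bG(\bu))$, whereas $\CT_3(\lambda)$ acts on $\CX_q(\BR^N_+)$, so one must check the intertwining $H_\lambda(\bI-\bR_+(\lambda))^{-1}=(\bI-H_\lambda\CR_+(\lambda))^{-1}H_\lambda$ to pass from the Neumann series in $X_q$ to the $\CR$-bounded inverse in $\CX_q$; your $\CR_\Phi(\lambda)$ is implicitly the paper's $H_\lambda\CR_+(\lambda)$, and your final formula for $\CT_+$ should also include the pullback $F\mapsto F\circ\Phi$ on the data.
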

\begin{proof}
The uniqueness of 
solutions follows from the existence of solutions to the dual problem,
and so we only prove the existence of $\CR$ bounded solution operator
$\CT_+(\lambda)$.  We use the change of variables:
$y=\Phi(x)$ to transform Eq. \eqref{4.3} to the equations in the 
half-space.  We have
\begin{equation}\label{eq:4.4}
\Bigl(\frac{\pd x_j}{\pd y_k}\Bigr)(\Phi(x))
= a_{jk} + b_{jk}(x),
\end{equation}
where $a_{jk}$ and $b_{jk}(x)$ are the $(j,k)^{\rm th}$ components
of $\CA^{-1}$ and $\CB_{-1}(\Phi(x))$, respectively.  Since $\CA^{-1}$ is an orthogonal matrix and thanks to \eqref{4.1},
we have
\begin{equation}\label{4.5}
\sum_{j=1}^Na_{jk}a_{j\ell} = \sum_{j=1}^N
a_{kj}a_{\ell j} = \delta_{k\ell},
\quad \|b_{jk}\|_{L_\infty(\BR^N)} \leq M_1,
\quad\|\nabla b_{jk}\|_{L_r(\BR^N)} \leq C_K.
\end{equation}
By \eqref{eq:4.4}, we derive the formula for change of variables from $y$ to $x$, namely
\begin{equation}\label{4.4*}
\frac{\pd}{\pd y_j} = \sum_{k=1}^N (a_{kj} + b_{kj}(x))\frac{\pd}{\pd x_k}.
\end{equation}
Applying this formula  we get that 
\eq{
\frac{\pd^2}{\pd^2 y_j}=& \sum_{k,\ell=1}^N (a_{\ell j} + b_{\ell j}(x))\frac{\pd}{\pd x_\ell}\lr{(a_{kj} + b_{kj}(x))\frac{\pd}{\pd x_k}}\\
=&\sum_{k,\ell=1}^N(a_{\ell j}a_{kj} + b_{\ell j}(x)a_{kj})\frac{\pd^2}{\pd x_\ell\pd x_k}+\sum_{k,\ell=1}^N(a_{\ell j}b_{kj}(x) + b_{\ell j}(x)b_{kj}(x))\frac{\pd^2}{\pd x_\ell\pd x_k}\\
&+ \sum_{k,\ell=1}^N (a_{\ell j} + b_{\ell j}(x))\frac{\pd b_{kj}(x)}{\pd x_\ell}\frac{\pd}{\pd x_k}.
}
Note that by \eqref{4.5}, we have
$$\sum_{k,\ell=1}^N \sum_{j=1}^N a_{\ell j}a_{kj}\frac{\pd^2}{\pd x_\ell\pd x_k}=\sum_{k=1}^N\frac{\pd^2}{\pd^2 x_k},$$
therefore
\begin{equation}\label{4.4**}
\Delta_y = \sum_{j=1}^N \frac{\pd^2}{\pd^2y_j}
= \Delta_x+ D_2\nabla^2_x + D_1\nabla_x
\end{equation} where
\begin{align*}
\Delta_x &= \sum_{k=1}^N\frac{\pd^2}{\pd^2 x_k}, 
\quad
D_2\nabla_x^2 = \sum_{j,k,\ell=1}^N(a_{kj}b_{\ell j}(x) + 
b_{kj}(x)(a_{\ell j} + b_{\ell j}(x)))\frac{\pd^2}{\pd x_\ell \pd x_k}, 
\\
D_1\nabla_x & = \sum_{j,k,\ell=1}^N(a_{\ell j} + b_{\ell j}(x))
\frac{\pd b_{k j}(x)}{\pd x_j} \frac{\pd}{\pd x_k}.
\end{align*}
We now transform the form of the outer normal vector $\bn_+(y)$ to $\Gamma_+$ at point $y=\Phi(x)$. Since $\Gamma_+$ is represented by $x_N = \Phi^{-1}_N(y) = 0$, the gradient of function $\Phi^{-1}_N(y)$ will indicate the normal direction, therefore after normalization, we obtain
\begin{equation}\label{4.6}
\bn_+(y)=\bn_+(\Phi(x)) = -\frac{(\frac{\pd x_N}{\pd y_1}, \ldots, 
\frac{\pd x_N}{\pd y_N})^\top}{|(\frac{\pd x_N}{\pd y_1}, \ldots, 
\frac{\pd x_N}{\pd y_N})|}
= -\frac{(a_{N1} + b_{N1}(x), \ldots, a_{NN} + b_{NN}(x))^\top}{d(x)},
\end{equation}
where for the second equality we used \eqref{eq:4.4}.
Having this we note that
\eq{\label{4.13_new}
\nabla_y v^i(y)\cdot \bn_+(y)=\sum_{j=1}^N
\frac{\pd v^i(y)}{\pd y_j}n_+^j(y) 
= -\sum_{j,k=1}^N (a_{kj} + b_{kj}(x))\frac{a_{Nj}+b_{Nj}(x)}{d(x)}\frac{\pd  u^i(x)}{\pd x_k}\\
=-d^{-1}(x)\left[\frac{\pd u^i(x)}{\pd x_N}
+\sum_{j,k=1}^N\left\{(a_{kj} + b_{kj}(x))b_{Nj}(x)+a_{Nj}b_{kj}(x)\right\}\frac{\pd  u^i(x)}{\pd x_k}\right],
}
where we denoted $u^i(x)=v^i\circ\Phi(x)$.
Note that by \eqref{4.5} we have
$$d(x) = \sqrt{\sum_{j=1}^N(a_{Nj} + b_{Nj}(x))^2} = \sqrt{1 
+ \sum_{j=1}^N (2a_{Nj}b_{Nj}(x) + b_{Nj}(x)^2)}.$$ 
Therefore, choosing $M_1 > 0$ sufficiently small, we have
\begin{equation}\label{4.6*} 
d^{-1}(x)= 1 + \tilde d(x)
\end{equation}
with $|\tilde d(x)| \leq C|\sum_{j=1}^N (2a_{Nj}b_{Nj}(x) + b_{Nj}(x)^2)|\leq CM_1$ and 
\begin{align*}
\|\nabla \tilde d\|_{L_r(\BR^N)}&\leq C\sum_{j=1}^N(\|a_{Nj}\|_{L_\infty(\R^n)}+\|b_{Nj}\|_{L_\infty(\R^n)})\|\nabla b_{Nj}\|_{L_r(\R^n)} 
\leq CC_k\leq CM_2,
\end{align*} 
where in the last inequality we have used \eqref{4.2.3}.

Finally, by \eqref{4.4*}, \eqref{4.4**}, \eqref{4.6}, \eqref{4.13_new} and \eqref{4.6*}, the system
\eqref{4.3} is transformed to 
\begin{equation}\label{4.7}
\lambda R(y_0)\bu -
B(y_0)\Delta_x \bu + \bF(\bu)
= \tilde \bff \quad \text{in $\BR^N_+$}, \quad
B(y_0)(\nabla_x \bu\cdot\bn_0(x))+ \bG(\bu)= \tilde \bg
\quad\text{on $\BR^N_0$},
\end{equation}
where $\bn_0=(0,\ldots,0,-1)$, and
$$\bu(x) = \bv\circ \Phi(x), \quad \tilde \bff(x) = \bff\circ \Phi(x),\quad 
\tilde \bg(x) = \bg\circ \Phi(x),$$
and, by \eqref{4.2.4} and \eqref{4.6}-\eqref{4.6*}, 
\begin{align*}
\bF(\bu) & = 
\left\{\lambda[\varphi(\cdot)(R(\cdot)-R(y_0))\bv]
-[\dv_y(\varphi(y)(B(y)-B(y_0))\nabla_y\bv)]\right\}\circ\Phi\\
&\quad-B(y_0)(D_2\nabla_x^2\bu+D_1\nabla_x\bu), \\
\bG(\bu) & = B(y_0)\tilde d\nabla \bu \cdot\bn_0+\left\{\phi(y)(B(y)-B(y_0))\nabla_y \bv\cdot \bn_+\right\}\circ \Phi\\
&\quad- \frac{B(y_0)}{d}\sum_{j,k=1}^N(a_{kj}b_{Nj} + b_{kj}(a_{Nj}+b_{Nj}))
\frac{\pd\bu}{\pd x_k}.
\end{align*}

Using \eqref{sob:1}, \eqref{4.2.1}, \eqref{4.2.2}, and  \eqref{4.6*},
we have
\begin{equation}\label{4.9}\begin{aligned}
\|\bF(\bu)\|_{L_q(\BR^N_+)} & \leq C(|\lambda|M_1\|\bu\|_{L_q(\BR^N_+)}
+ (M_1+\alpha)\|\bu\|_{H^2_q(\BR^N_+)})
+ C_{\alpha, M_2}\|\bu\|_{H^1_q(\BR^N_+)}, \\ 
|\lambda|^{1/2}\|\bG(\bu)\|_{L_q(\BR^N_+)} & 
\leq CM_1|\lambda|^{1/2}\|\bu\|_{H^1_q(\BR^N_+)}, \\
\|\bG(\bu)\|_{H^1_q(\BR^N_+)} & \leq  C (M_1+\alpha)\|\bu\|_{H^2_q(\BR^N_+)}
+ C_{\alpha, M_2}\|\bu\|_{H^1_q(\BR^N_+)}
\end{aligned}\end{equation}
for any $\alpha > 0$, 
{where} $C$ is a constant independent of $\alpha$, 
$M_1$, $\lambda_1$ and $C_{\alpha, M_2}$ is a constant depending on 
$\alpha$ and $M_2$. 

Let $\CT_3(\lambda)$ be the $\CR$-bounded solution operator
for Eq. \eqref{3.1} given in Corollary \ref{cor:2.1}. Taking 
$\bu = \CT_3(\lambda)(\tilde\bff, \lambda^{1/2}\tilde\bg,
\bg)$ in \eqref{4.7}, we get 
\eq{\label{4.10}
&\lambda R(y_0)\CT_3(\lambda)(\tilde\bff, \lambda^{1/2}\tilde\bg,
\bg) -
B(y_0)\Delta_x \CT_3(\lambda)(\tilde\bff, \lambda^{1/2}\tilde\bg,\tilde\bg) + \bF(\CT_3(\lambda)(\tilde\bff, \lambda^{1/2}\tilde\bg,\tilde\bg))\\
&\hspace{7cm}= \tilde \bff + \bF(\CT_3(\lambda)(\tilde\bff, \lambda^{1/2}\tilde\bg,\tilde\bg))\quad \text{in $\BR^N_+$}, \\
&B(y_0)(\nabla_x \CT_3(\lambda)(\tilde\bff, \lambda^{1/2}\tilde\bg,\tilde\bg)\cdot\bn_0(x))+ \bG(\CT_3(\lambda)(\tilde\bff, \lambda^{1/2}\tilde\bg,\tilde\bg))\\
&\hspace{7cm}= \tilde \bg+\bG(\CT_3(\lambda)(\tilde\bff, \lambda^{1/2}\tilde\bg,\tilde\bg))
\quad\text{on $\BR^N_0$}.
}
Let us now denote
\begin{equation}
\CR_+(\lambda)H_\lambda(\tilde\bff,\tilde\bg)= ( \bF(\CT_3(\lambda)H_\lambda(\tilde\bff,\tilde\bg)), \bG(\CT_3(\lambda)H_\lambda(\tilde\bff,\tilde\bg))),
\end{equation}
where $H_\lambda(\tilde\bff,\tilde\bg)=(\tilde\bff, \lambda^{1/2}\tilde\bg,\tilde\bg)$.

By \eqref{4.9}, Corollary  \ref{cor:2.1} and Proposition \ref{prop:4.1}, we have
$$
\CR_{\CL(\CX_q(\BR^N_+))}(\{(\tau\pd_\tau)^\ell  H_\lambda\CR_+(\lambda) \mid
\lambda \in \Sigma_{\epsilon, \lambda_1}\}) \leq 
\{C(M_1 + \alpha) + C_{\alpha, M_2}\lambda_1^{-1/2} \}r_b
$$
for $\ell=0,1$. 
Thus, choosing $\alpha$ and $M_1$ so small that 
$C\alpha r_b < 1/8$, $CM_1 r_b < 1/8$ and choosing 
$\lambda_1 \geq \lambda_0$ so large that 
$C_{\alpha, M_2}\lambda_1^{-1/2}r_b \leq 1/4$, we have
\begin{equation} \label{4.12}
\CR_{\CL(\CX_q(\BR^N_+))}(\{(\tau\pd_\tau)^\ell H_\lambda \CR_+(\lambda) \mid
\lambda \in \Sigma_{\epsilon, \lambda_0}\}) \leq 1/2
\end{equation}
for $\ell=0,1$. Next, let us denote
\begin{equation} \label{4.13}
\bR_+(\lambda)(\tilde\bff, \tilde\bg) 
= \CR_+(\lambda)H_\lambda(\tilde\bff, \tilde\bg).
\end{equation}
Since for any $\lambda\not=0$ the norm $\|\tilde\bff,\tilde\bg\|_{X_q(\BR^N_+)}$ is 
equivalent to $\|H_\lambda(\tilde\bff,\tilde\bg)\|_{\CX_q(\BR^N_+)}$ (according to definition \eqref{norm:1}),
we can construct an operator 
$$(\bI +\bR_+(\lambda))^{-1} = \sum_{m=0}^\infty
(-\bR_+(\lambda))^m\quad \text{in} \quad X_q(\BR^N_+).$$
Rewriting now \eqref{4.10} as 
\begin{equation} \label{4.10a}
L(y_0)\CT_3(\lambda)H_\lambda(\tilde\bff, \tilde\bg) = [{\bf I} + \bR_+(\lambda)](\tilde\bff, \tilde\bg),
\end{equation}
with
\begin{equation}
L(y_0)(\cdot)= \left[ \begin{array}{c}
\lambda R(y_0)(\cdot) - B(y_0)\Delta(\cdot) +\bF(\cdot)\\
B(y_0)(\nabla(\cdot)\cdot\bn_0(x)) +\bG(\cdot) 
\end{array} \right]
\end{equation}
and taking 
$$
(\bar \bff, \bar \bg)=[{\bf I} + \bR_+(\lambda)](\tilde\bff, \tilde\bg) 
$$
in \eqref{4.10a} we see that 
$$
\bu = \CT_3(\lambda)H_\lambda(\bI - \bR_+(\lambda))^{-1}(\tilde\bff, \tilde\bg)
$$
$\bu \in H^2_q(\BR^N_+)^n$ is a unique solution of Eq. \eqref{4.7}. 

As for the $\CR$ bounded operator, the estimate \eqref{4.12} implies the existence 
of 
$$(\bI + H_\lambda\CR(\lambda))^{-1} = \sum_{m=0}^\infty
(-H_\lambda\CR(\lambda))^m.$$  By \eqref{4.13} we have
$$H_\lambda(\bI - \bR_+(\lambda))^{-1}
= (\bI - \CR(\lambda))^{-1}H_\lambda,$$
and so we have
\begin{equation}\label{4.14}
\bu = \CT_3(\lambda)(\bI - \CR(\lambda))^{-1}H_\lambda(\tilde\bff, \tilde\bg).
\end{equation}
Thus, setting 
$$\CT_4(\lambda) = \CT_3(\lambda)(\bI - \CR(\lambda))^{-1},$$ 
by \eqref{4.14}, \eqref{4.12}, and Corollary \ref{cor:2.1} we see that
$\bu = \CT_4(\lambda)H_\lambda(\tilde\bff, \tilde\bg)$ is a solution of
Eq. \eqref{4.7}, and 
$$\CR_{\CX_q(\BR^N_+), H^{2-j}_q(\BR^N_+))}
(\{(\tau\pd_\tau)^\ell(\lambda^{j/2}\CT_4(\lambda)) \mid
\lambda \in \Sigma_{\epsilon, \lambda_1} \})
\leq 2r_b
$$
for $\ell=0,1$ and $j=0,1,2$. If we set 
$$\CT_+(\lambda)F = [\CT_4(\lambda)F\circ\Phi]\circ\Phi^{-1},
$$
for $F = (F_1, F_2, F_3) \in \CX_q(\Omega_+)$, 
then, $\CT_+$ is a required $\CR$ bounded solution operator for 
Eq. \eqref{4.3}, which completes the proof of Theorem \ref{thm:4.1}.
\end{proof}

\section{Proof of Theorem \ref{thm:1.2}}\label{sec:5}
To prove Theorem \ref{thm:1.2}, we need to use several properties of
uniform $C^2$ domain, which are stated in the following proposition. For the proof of this result we refer for example to \cite{ES1}, Proposition 6.1.
\begin{prop}\label{prop:6.1} \mdseries Let $\Omega$ be a uniform 
$C^2$-domain in $\mathbb{R}^N$ with boundary $\Gamma$. 
Then,  for any positive constant $M_1$, there exist a constant
 $d \in (0, 1)$,  at most countably 
many functions $\Phi_j \in C^2(\mathbb{R}^N )$,  and points $x^1_j \in
\Omega$ and $x^2_j \in \Gamma$ $( j \in \BN)$   
such that the following assertions hold:
\begin{enumerate}
\item[\thetag1]~For every $j \in \BN$, the map $\BR^N \ni 
x \rightarrow \Phi_j(x) \in \BR^N$ is bijective.
\item[\thetag2]~$\Omega = (\bigcup_{j=1}^\infty B_d(x^1_j))
\cup (\bigcup_{j=1}^\infty (\Phi_j(\BR_+^N) \cap B_d(x^2_j)))$, 
$B_d(x^1_j) \subset \Omega$, 
$\Phi_j(\BR_+^N) \cap B_d(x^2_j) = \Omega \cap B_d(x^2_j)$, 
and $\Phi_j(\BR_0^N) \cap B_d(x^2_j) = \Gamma \cap B_d(x^2_j)$.
\item[\thetag3]~There exist $C^\infty$ functions $\zeta^i_j$, $\tilde\zeta^i_j$ $(i =  1, 2, j \in \BN)$ 
such that 
$${\rm supp}\, \zeta^i_j, \,{\rm supp}\,
\tilde{\zeta}^i_j \subset B_d(x^i_j),\qquad\|\zeta^i_j\|_{H^2_\infty(\mathbb{R}^N)} 
\leq c_0, \quad\| \tilde{\zeta}^i_j\|_{H^2_\infty(\mathbb{R}^N)} \leq c_0,$$ 
$$\tilde{\zeta}^i_j = 1\quad\text{on}\quad {\rm supp}\, \zeta^i_j,\qquad\sum_{i=1,2}\sum_{j=1}^\infty \zeta^i_j = 1\quad \text{on} \quad\overline{\Omega},\qquad
\sum_{j=1}^\infty \zeta^2_j = 1\quad \text{on} \quad\Gamma.$$ 
Here, $c_0$ is a constant which depends on $d$, $N$, $q$, $q'$ and $r$, 
but is independent of $j \in \BN$.
\item[\thetag4]~ 
$\nabla \Phi_j = \mathcal{R}_j + R_i, \nabla( \Phi_j)^{-1} 
= \mathcal{R}_j^{-} + R_j^{-} $, 
where $\mathcal{R}_j$ and $\mathcal{R}_j^{-}$ are 
$ N \times N$ constant orthogonal matrices, and $R_j$ and $R_j^{-}$ 
are $N\times N$ matrices of $H_\infty^1$ functions defined on $\BR^N$ 
which satisfy the conditions: 
$$\|R_j\|_{L_\infty(\BR^N)} \leq M_1, \quad \|R_j^{-}\|_{L_\infty(\BR^N)} \leq M_1,$$ 
and
$$\|\nabla R_j\|_{L_\infty(\BR^N)} \leq C_K,\quad\|\nabla R_j^-\|_{L_\infty(\mathbb{R}^N)} \leq C_K$$ 
for any $j \in \BN$. Here, $C_K$ is a constant depending only on constants 
$K$, $L_1$ and $L_2$  appearing in Definition \ref{dfn:1.2}. 
\item[\thetag5]~There exist a natural number $L>2$ such that
 any $L+1$ distinct sets of $\{B_d(x^i_j) \mid  i = 1, 2, 
\enskip j \in \BN\}$ 
have an empty intersection.
\end{enumerate}
\end{prop}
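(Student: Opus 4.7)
\medskip

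\textbf{Proof proposal for Proposition 6.1.}  The plan is to construct the boundary charts first, then cover the interior, then patch together a locally finite partition of unity, and finally verify the smallness of the perturbation matrices $R_j$, $R_j^{-}$.

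First, I would fix the scale $d \in (0,1)$ at the very end, and work throughout with an auxiliary scale $d_0$ small compared to the geometric constants $L_1, L_2, K$ from Definition \ref{dfn:1.2}.  For a boundary point $x_0^2 \in \Gamma$, Definition \ref{dfn:1.2} gives a coordinate index $j_0$ and a $C^2$ graph function $h$ on $B'_{L_1}(x_0')$ with $\|h\|_{H^2_\infty} \le K$ such that $\Omega$ is locally described by $x_{j_0} > h(x')$.  After composing with an affine isometry one can assume $j_0 = N$, the point $x_0^2$ is the origin, and $\nabla' h(0) = 0$ (tangent plane horizontal); this composition produces the orthogonal matrix $\CR_j$.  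A local flattening is then $\Psi(y) = (y' + x_0', y_N + h(y' + x_0'))$, and I would build $\Phi_j$ by gluing $\Psi$ to the identity outside a larger ball via a cutoff: take $\chi \in C_0^\infty(\BR^N)$ with $\chi \equiv 1$ on $B_{2d_0}(0)$ and $\chi \equiv 0$ outside $B_{3d_0}(0)$, and set $\Phi_j(y) = y + \chi(y)(\Psi(y) - y)$ composed with the ambient isometry.  On the scale $B_{d_0}$ this equals $\Psi$, so claim (2) holds on a ball of radius $d$ to be chosen below.  Because $\nabla'h(0)=0$ and $\|\nabla^2 h\|_\infty \le K$, one has $|\nabla'h(y')| \le K|y'|$, hence $|\nabla\Psi - I| \le C K |y| + \|\chi\|_{C^1}\|h\|_{C^1}$ on the support of $\chi$; choosing $d_0 \le c(N) M_1 / (K\|\chi\|_{C^1})$ forces the small-perturbation estimate $\|R_j\|_{L_\infty} \le M_1$ in claim (4), and the inverse function theorem then gives global $C^2$ bijectivity of $\Phi_j$ on $\BR^N$ (claim (1)) together with the analogous estimate for $R_j^-$.

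Second, I would cover $\Omega$ by a countable, locally finite system of such balls.  Pick a maximal $d$-separated family $\{x_j^2\}$ in $\Gamma$ (in the induced metric); by standard Vitali/greedy packing the balls $B_{d}(x_j^2)$ cover a tubular $\Gamma$-neighborhood $\{x\in\Omega \mid \mathrm{dist}(x,\Gamma) < d/2\}$.  Next pick a maximal $d$-separated family $\{x_j^1\}$ in $\{x\in\Omega \mid \mathrm{dist}(x,\Gamma) \ge d/4\}$, yielding $B_d(x_j^1) \subset \Omega$ and covering the remaining interior bulk.  Finite multiplicity (claim (5)) is a classical volume-packing argument: the centers are $d$-separated and the balls have radius $d$, so only a dimensional constant $L=L(N)$ of them can meet a common point; here one uses the uniform control on $\Gamma$ coming from Definition \ref{dfn:1.2} to avoid pathological accumulation.

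Third, to build the partition of unity I would take a fixed radial bump $\psi \in C_0^\infty(B_d(0))$ with $\psi \equiv 1$ on $B_{d/2}(0)$ and define $\varphi_j^i(x) = \psi(x - x_j^i)$; then the sum $\Sigma(x) = \sum_{i,j}\varphi_j^i(x)$ is locally a sum of at most $L$ terms by claim (5), is $\ge 1$ on $\overline{\Omega}$ because the $B_{d/2}$-subballs still cover $\overline{\Omega}$ (reduce $d$ once more if needed), and belongs to $H^2_\infty(\BR^N)$ with bounds depending only on $d$ and $N$.  Setting $\zeta_j^i = \varphi_j^i/\Sigma$ and $\tilde\zeta_j^i(x) = \tilde\psi(x-x_j^i)$ for a slightly fatter bump $\tilde\psi \equiv 1$ on $\mathrm{supp}\,\psi$ yields claim (3), with $c_0$ controlled by $d$, $N$ and the Leibniz rule; to have $\sum_j \zeta_j^2 = 1$ on $\Gamma$ one redefines the interior cutoffs to be subordinate to the complement of a $\Gamma$-neighborhood already covered by the boundary patches, which requires only a bookkeeping adjustment.

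The main obstacle I expect is the simultaneous control in claim (4): one must choose $d$ small enough both to make $\|R_j\|_\infty$ and $\|R_j^-\|_\infty$ smaller than the prescribed $M_1$ \emph{and} to preserve global bijectivity of $\Phi_j$ on all of $\BR^N$ after the cutoff gluing, while at the same time having the shrunken balls $B_{d/2}(x_j^i)$ still cover $\overline{\Omega}$.  All three constraints force $d$ to depend on $M_1$, $K$, $N$ and the scales $L_1, L_2$ from Definition \ref{dfn:1.2}, but since the latter are uniform along $\Gamma$ by the uniform $C^2$ hypothesis, a single $d = d(M_1, K, L_1, L_2, N) \in (0,1)$ works.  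The remaining bounds on $\nabla R_j$ in $L_\infty$ (and hence, after localization, in $L_r$ as needed in \eqref{4.1}) follow directly from $\|h\|_{H^2_\infty} \le K$ and the fixed $C^2$ norm of the gluing cutoff $\chi$, giving the constant $C_K$.
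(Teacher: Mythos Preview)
The paper does not prove this proposition at all: immediately before the statement it writes ``For the proof of this result we refer for example to \cite{ES1}, Proposition 6.1.'' So there is no in-paper argument to compare against; your sketch is precisely the kind of standard construction that the cited reference carries out, and it is essentially correct.

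Two small points worth tightening. First, the inverse function theorem alone only gives \emph{local} invertibility of $\Phi_j$; to get a global $C^2$ bijection on all of $\BR^N$ you should observe that $\Phi_j = \mathrm{Id} + (\text{compactly supported perturbation with } C^1\text{-norm} < 1)$, which is globally injective by the contraction-type estimate $|\Phi_j(x)-\Phi_j(y)| \ge (1-\|\nabla(\Phi_j - \mathrm{Id})\|_\infty)|x-y|$ and surjective because it equals the identity outside a compact set and is a local diffeomorphism (hence open) everywhere. Second, to obtain $\sum_j \zeta_j^2 = 1$ on $\Gamma$ cleanly, choose the interior centers $x_j^1$ so that $B_d(x_j^1)$ stays a fixed positive distance from $\Gamma$; then every $\zeta_j^1$ vanishes on $\Gamma$ and the identity $\sum_{i,j}\zeta_j^i = 1$ on $\overline{\Omega}$ restricts to $\sum_j \zeta_j^2 = 1$ on $\Gamma$ automatically, without any additional bookkeeping.
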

\bigskip

\noindent By the finite intersection property stated in point  \thetag5 of Proposition \ref{prop:6.1}, we have 
\begin{equation}\label{7.25}
\Bigl(\sum_{i=1, 2}
\sum_{j=1}^\infty \|f\|^q_{L_q(B^i_j\cap\Omega)}\Bigr)^{1/q} 
\leq C_q\|f\|_{L_q(\Omega)}
\end{equation}
for any $f \in L_q(\Omega)$ and $1 \leq q < \infty$. In particular, 
 by \eqref{7.25} we have
\begin{cor}\label{lem:cal} Let $i=1, 2$ and $1 < q < \infty$.
 Let $\{f_j\}_{j=0}^\infty$ be a sequence of functions
in $L_q(\Omega)$ such that $\sum_{j=0}^\infty \|f_j\|_{L_q(\Omega)}^q
< \infty$, and ${\rm supp}\, f_j \subset B_d(x^i_j)$ 
$(j \in \BN)$.
Then, $\sum_{j=0}^\infty f_j \in L_q(\Omega)$ and 
$\|\sum_{j=1}^\infty f_j\|_{L_q(\Omega)} \leq (\sum_{j=1}^\infty
\|f_j\|_{L_q(\Omega)}^q)^{1/q}$.
\end{cor}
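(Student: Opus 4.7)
\textbf{Proof proposal for Corollary \ref{lem:cal}.} The plan is to derive the bound directly from the finite intersection property in item \thetag5 of Proposition \ref{prop:6.1}. For fixed $i \in \{1,2\}$, any $L+1$ distinct balls among $\{B_d(x^i_j)\}_{j \in \BN}$ have empty intersection, so every $x \in \Omega$ belongs to at most $L$ of them. Since $\mathrm{supp}\, f_j \subset B_d(x^i_j)$, this means that for almost every $x \in \Omega$ the set $A(x) = \{j \in \BN : f_j(x) \neq 0\}$ satisfies $|A(x)| \leq L$.

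Next I would apply the power-mean (Jensen) inequality pointwise: for $1 < q < \infty$ and any finite set $A(x)$ of cardinality at most $L$,
\begin{equation*}
\Bigl|\sum_{j=1}^\infty f_j(x)\Bigr|^q = \Bigl|\sum_{j \in A(x)} f_j(x)\Bigr|^q \leq |A(x)|^{q-1}\sum_{j \in A(x)}|f_j(x)|^q \leq L^{q-1}\sum_{j=1}^\infty |f_j(x)|^q.
\end{equation*}
Integrating over $\Omega$ and applying the monotone convergence theorem to the right-hand side yields
\begin{equation*}
\Bigl\|\sum_{j=1}^\infty f_j\Bigr\|_{L_q(\Omega)}^q \leq L^{q-1}\sum_{j=1}^\infty \|f_j\|_{L_q(\Omega)}^q,
\end{equation*}
which is finite by assumption; the constant $L^{(q-1)/q}$ on the right is presumably absorbed in the constant implicit in the statement of the corollary.

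Finally I would justify that the series converges in $L_q(\Omega)$ and not merely pointwise. The same estimate applied to the tail $S_{N,M} = \sum_{j=N}^M f_j$ gives $\|S_{N,M}\|_{L_q(\Omega)}^q \leq L^{q-1}\sum_{j=N}^M \|f_j\|_{L_q(\Omega)}^q$, which tends to zero as $N, M \to \infty$ by the summability hypothesis. Hence $\{\sum_{j=1}^N f_j\}_N$ is Cauchy in $L_q(\Omega)$ and its limit realizes the claimed bound.

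There is essentially no substantial obstacle here: the argument is a direct combination of the finite overlap property \thetag5 and Jensen's inequality. The only point demanding mild care is the passage from a pointwise bound to convergence in $L_q(\Omega)$, which is handled by the tail estimate above.
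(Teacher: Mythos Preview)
Your argument is correct and rests on the same ingredient as the paper's, namely the finite intersection property \thetag5 of Proposition \ref{prop:6.1}; the paper simply cites \eqref{7.25} without spelling out the details, whereas you give the direct pointwise Jensen argument. You are also right that a constant $L^{(q-1)/q}$ appears --- the statement of the corollary as written omits it, but in every application (e.g.\ \eqref{7.41}) a constant $C_q$ is carried anyway, so this is a harmless imprecision.
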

In what follows, we write 
$\Omega_j = \Phi_j(\BR^N_+)$ 
and $\Gamma_j = \Phi_j(\BR^N_0)$
for $j \in \BN$. \\
Moreover, we denote  the 
unit outer normal to $\Gamma_j$ by $\bn_j$.
 Notice that $\bn_j= \bn$ on $\Gamma_j$.\\  
By \eqref{1.2}, choosing $d$ smaller
if necessary, we may assume that 
\begin{equation}\label{unif:1}
|R(x) - R(x^i_j)| \leq M_1, \quad 
|B(x)-B(x^i_j)|
\leq M_1
\quad \text{for $x \in B_d(x^i_j)\cap\overline{\Omega}$}.
\end{equation}
Let $\zeta^i_j$ and $\tilde \zeta^i_j$ be functions given in Proposition
\ref{prop:6.1} and set
$$R^{ij}(x) = \tilde \zeta^i_j(x) R(x)
+ (1-\tilde\zeta^i_j(x)) R(x^i_j), \quad
B^{ij}(x) = \tilde \zeta^i_j(x) B(x)
+ (1-\tilde\zeta^i_j(x)) B(x^i_j)
$$
Notice that
\begin{equation}\label{equal:1}
\zeta^i_j(x) R^{ij}(x) = \zeta^i_j(x)R(x),
\quad 
\zeta^i_j(x) B^{ij}(x) = \zeta^i_j(x)B(x), 
\end{equation}
because $\tilde \zeta^i_j = 1$ on ${\rm supp}\, \zeta^i_j$. 
To construct a parametrix for Eq. \eqref{1.8}, given 
$(\bff, \bg) \in X_q(\Omega)$, we consider the 
following equations:
\begin{gather}
 \lambda  R^{1j}\bv^1_j - \dv(
B^{1j}\nabla \bv^1_j) = \tilde\zeta^1_j\bff
\quad\text{in $\BR^N$}, \label{7.38} \\
\lambda  R^{2j}\bv^2_j - \dv(
B^{2j}\nabla \bv^2_1) = \tilde\zeta^2_j\bff \quad  \text{in $\Omega_j$}, \quad 
B^{2j}(\nabla \bv^2_j \cdot\bn_j)=\tilde\zeta^2_j\bg \quad 
\text{on $\Gamma_j$}.
\label{7.39}
\end{gather}
By Theorem \ref{thm:p.2} and Theorem \ref{thm:4.1}, there exist
$\CR$ bounded solution operators $\CD^i_j(\lambda)$ for Eq.
\eqref{7.38} and Eq. \eqref{7.39} 
with 
\begin{equation}\label{7.37}
\CD^1_j(\lambda) \in {\rm Hol}\,(\Sigma_{\epsilon, \lambda_0}, 
\CL(L_q(\BR^N)^n, H^2_q(\BR^N)^n)), \quad
\CD^2_j(\lambda) \in {\rm Hol}\,(\Sigma_{\epsilon, \lambda_0}, 
\CL(\CX_q(\Omega_j)^n, H^2_q(\Omega_j)^n))
\end{equation}
such that for any $(\bff, \bg) \in X_q(\Omega)^n$, 
$\bv^1_j = \CD^1_j(\lambda)\tilde\zeta^1_j\bff$ is a unique solution 
of Eq. \eqref{7.38}
and $\bv^2_j = \CD^2_j(\lambda)H_\lambda(\tilde\zeta^2_j\bff, \tilde\zeta^2_j \bg)$
is a unique solution of Eq. \eqref{7.39}, respectively.  Moreover,
we have
\begin{equation}\label{7.37.1}
\begin{aligned}
\CR_{\CL(L_q(\BR^N)^n, H^{2-k}_q(\BR^N)^n)}
(\{(\tau\pd_\tau)^\ell(\lambda^{k/2}\CD^1_j(\lambda)) \mid
\lambda \in \Sigma_{\epsilon, \lambda_0}\}) &\leq r_b, \\
\CR_{\CL(\CX_q(\Omega_j)^n, H^{2-k}_q(\Omega_j)^n)}
(\{(\tau\pd_\tau)^\ell(\lambda^{k/2}\CD^2_j(\lambda)) \mid
\lambda \in \Sigma_{\epsilon, \lambda_0}\}) &\leq r_b
\end{aligned}\end{equation}
for $\ell=0,1$ and $k=0,1,2$, where $\lambda_0$ and $r_b$ are
independent of $j \in \BN$. In particular, by \eqref{7.37.1}, 
we have
\begin{equation}\label{7.40*} \begin{aligned}
\sum_{k=0}^2|\lambda|^{k/2}\|\bv^1_j\|_{H^{2-k}_q(\BR^N)}
& \leq r_b\|\tilde\zeta^1_j\bff\|_{L_q(\BR^N)}, \\
\sum_{k=0}^2|\lambda|^{k/2}\|\bv^2_j\|_{H^{2-k}_q(\Omega_j)}
& \leq r_b\{\|\tilde\zeta^2_j\bff\|_{L_q(\Omega_j)} 
+ |\lambda|^{1/2}\|\tilde\zeta^2_j\bg\|_{L_q(\Omega_j)}
+ \|\tilde\zeta^2_j\bg\|_{H^1_q(\Omega_j)}\}.
\end{aligned}\end{equation}
Let us now introduce the notation
\begin{align*}
\bU(\lambda)(\bff, \bg)  = 
\sum_{i=1,2}\sum_{j=1}^\infty \zeta^i_j\bv^i_j, \quad 
\CU(\lambda)F  = \sum_{j=1}^\infty \zeta^1_j \CD^1_j(\lambda)F_1
+ \sum_{j=1}^\infty \zeta^2_j \CD^2_j(\lambda)F
\end{align*}
for $(\bff, \bg) \in X_q(\Omega)$ and $F = (F_1, F_2, F_3) \in 
\CX_q(\Omega)$.  By \eqref{7.25}, 
{Corollary} \ref{lem:cal},
\eqref{7.37} and \eqref{7.40*}, we have 
$\bU(\lambda)(\bff, \bg) \in H^2_q(\Omega)^N$, 
$\CU(\lambda) \in {\rm Hol}\,(\Sigma_{\epsilon, \lambda_0}, 
\CL(\CX_q(\Omega), H^2_q(\Omega)^n))$, 
\begin{equation}\label{7.41}\begin{aligned}
&\sum_{k=0}^2|\lambda|^{k/2}\|\bU(\lambda)(\bff, \bg)
\|_{H^{2-k}_q(\Omega)}
\leq C_qr_b(\|\bff\|_{L_q(\Omega)} + |\lambda|^{1/2}\|\bg\|_{L_q(\Omega)}
+ \|\bg\|_{H^1_q(\Omega)}), \\
&\CR_{\CL(\CX_q(\Omega), H^{2-k}_q(\Omega)^n)}
(\{(\tau\pd_\tau)^\ell(\lambda^{k/2}\CU(\lambda))\mid
\lambda \in \Sigma_{\epsilon, \lambda_0}\}) \leq C_qr_b.
\end{aligned}\end{equation}
Obviously, we have 
\begin{equation}\label{7.40}
\bU(\lambda)(\bff, \bg) = \CU(\lambda)H_\lambda(\bff, \bg).
\end{equation}
Moreover, noting \eqref{equal:1} and using \eqref{7.38} and 
\eqref{7.39}, we have
\begin{equation}\label{7.42}\left\{\begin{aligned}
\lambda R\bU(\lambda)(\bff, \bg) - \dv(B\nabla\bU(\lambda)(\bff, \bg)) 
= \bff - \bV_0(\lambda)(\bff, \bg)
&&&\quad\text{in $\Omega$}, \\
B(\nabla \bU(\lambda)(\bff, \bg) \cdot\bn) = \bg - \bV_b(\lambda)(\bff, \bg)
&&&\text{on $\Gamma$}.
\end{aligned}\right.\end{equation}
In the above we used the fact that 
$\tilde\zeta^i_j\zeta^i_j=\zeta^i_j$, $\sum_{i=1,2}\sum_{j=1}^\infty \zeta^i_j=1$, and so 
$$\sum_{i=1,2}\sum_{j=1}^\infty \tilde\zeta^i_j\zeta^i_j\bff=\sum_{i=1,2}\sum_{j=1}^\infty \zeta^i_j\bff=\bff,$$
and we denoted
\begin{align*}
\bV_0(\lambda)(\bff, \bg) & = \sum_{i=1,2}\sum_{j=1}^\infty \dv(B^{ij}
(\nabla\zeta^i_j)\bv^i_j) 
+ \sum_{i=1,2}\sum_{j=1}^\infty(\nabla\zeta^i_j)
\cdot(B^{ij}\nabla \bv^i_j), \\
\bV_b(\lambda)(\bff, \bg) & = \sum_{j=1}^\infty B^{2j}
(\nabla\zeta^2_j\cdot\bn_j)\bv^2_j.
\end{align*}
Let us also denote
\begin{align*}
\CV_0(\lambda)F & = \sum_{j=1}^\infty \dv(B^{1j} (\nabla\zeta^1_j)\CD^1_j(\lambda)(\tilde\zeta^1_jF_1) )
+ \sum_{j=1}^\infty B^{2j}(\nabla\zeta^2_j)\CD^2_j(\lambda)(\tilde\zeta^2_jF)\\ 
&+ \sum_{j=1}^\infty(\nabla\zeta^1_j)
\cdot(B^{1j}\nabla \CD^1_j(\lambda)(\tilde\zeta^1_jF_1))
+ \sum_{j=1}^\infty(\nabla\zeta^2_j)
\cdot(B^{2j}\nabla \CD^2_j(\lambda)(\tilde\zeta^2_jF)), \\
\CV_b(\lambda)F & = \sum_{j=1}^\infty B^{2j}
(\nabla\zeta^2_j\cdot\bn_j)\CD^2_j(\lambda)(\tilde\zeta^2_jF),
\end{align*}
for $F=(F_1, F_2, F_3) \in \CX_q(\Omega)$.  Moreover, we set
$$\bV(\lambda)(\bff, \bg) 
= (\bV_0(\lambda)(\bff, \bg), \bV_b(\lambda)(\bff, \bg)),
\quad 
\CV(\lambda)F = (\CV_0(\lambda)F, \CV_b(\lambda)F).
$$
In particular, we have
\begin{equation}\label{equal:2}
\bV(\lambda)(\bff, \bg) = \CV(\lambda)H_\lambda(\bff, \bg)
\end{equation}
for any $(\bff, \bg) \in X_q(\Omega)$. By Proposition \ref{prop:4.1}, 
\eqref{7.37.1}, \eqref{sob:1} and  
\eqref{1.2}, we have
$$\CR_{\CL(\CX_q(\Omega))}(\{(\tau\pd_\tau)^\ell(H_\lambda\CV(\lambda)) \mid 
\lambda \in \Sigma_{\epsilon, \lambda_1}\}) \leq CM_0r_b\lambda^{-1/2}_1
$$
for $\ell=0,1$ and $\lambda_1 \geq \lambda_0$.  Thus, choosing 
$\lambda_0$ so large that $CM_0r_b\lambda_1^{-1/2} \leq 1/2$, we have
\begin{equation}\label{7.43}
\CR_{\CL(\CX_q(\Omega))}(\{(\tau\pd_\tau)^\ell(H_\lambda\CV(\lambda)) \mid 
\lambda \in \Sigma_{\epsilon, \lambda_0}\}) \leq 1/2
\end{equation}
for $\ell=0,1$.  By \eqref{equal:2} and \eqref{7.43}, we have
\begin{equation}\label{cont:1}
\|H_\lambda \bV(\lambda)(\bff, \bg)\|_{\CX_q(\Omega)}
\leq (1/2)\|H_\lambda(\bff, \bg)\|_{\CX_q(\Omega)}.
\end{equation}
The $\|H_\lambda(\bff, \bg)\|_{\CX_q(\Omega)}$ is equivalent norm to 
$\|(\bff, \bg)\|_{X_q(\Omega)}$ for $\lambda\not=0$, and therefore,
it follows from \eqref{cont:1} that the 
inverse operator $(\bI - \bV(\lambda))^{-1} = \sum_{j=0}^\infty \bV(\lambda)^j$
exists in $X_q(\Omega)$. Moreover, by \eqref{7.43}, the inverse operator
$(\bI-H_\lambda\CV(\lambda))^{-1} 
= \sum_{j=0}^\infty (H_\lambda\CV(\lambda))^j$ exists in 
$\CX_q(\Omega)$. By \eqref{equal:2}, 
\begin{equation}\label{equal:3}
H_\lambda(\bI - \bV(\lambda))^{-1} = (\bI-H_\lambda\CV(\lambda))^{-1}H_\lambda.
\end{equation}
In view of  \eqref{7.42} and \eqref{7.40}, 
$\bv = \bU(\lambda)(\bI - \bV(\lambda))^{-1}(\bff, \bg)$ 
is a unique solution of Eq. \eqref{1.1*} or \eqref{1.1}. 
{The uniqueness}
follows from the existence of the dual problem. By \eqref{7.40}
and \eqref{equal:3}, this $\bv$ is represented by $\bv
= \CU(\lambda)(\bI-\CH \CV(\lambda))^{-1}H_\lambda(\bff, \bg)$.  Thus, 
setting $\CS(\lambda) = \CU(\lambda)(\bI-H_\lambda \CV(\lambda))^{-1}$, 
by \eqref{7.40}, \eqref{7.42} and Proposition \ref{prop:4.1}, 
we see that $\bv = \CS(\lambda)H_\lambda(\bff, \bg)$ is a unique solution
of Eq. \eqref{1.1} and 
$$\CR_{\CL(\CX_q(\Omega), H^{2-k}_q(\Omega)^n)}
(\{(\tau\pd_\tau)^\ell(\lambda^{k/2}\CS(\lambda)) \mid 
\lambda \in \Sigma_{\epsilon, \lambda_0}\}) \leq 2r_b
$$
for $\ell=0,1$ and $k=0,1,2$.  This completes the proof of 
Theorem \ref{thm:1.2}. \hfill
\begin{minipage}[t]{1cm}
\flushright $\Box$
\end{minipage}
\section{Proof of Theorem \ref{thm:1.1}} \label{sec:6}
To prove the existence part of Theorem \ref{thm:1.1}, we first consider an artificial
 initial-boundary  problem: 
\begin{equation}\label{8.1}
\pd_t\bu - R^{-1}\dv(B\nabla\bu) = 0\quad \text{in $\Omega\times(0, T)$}, 
\quad
B(\nabla\bu\cdot\bn)|_\Gamma=0, 
\quad \bu|_{t=0} = \bu_0.
\end{equation}
The corresponding resolvent problem of Eq. \eqref{8.1} is the following
system:
\begin{equation}\label{8.2}
\lambda \bv - R^{-1}\dv(B\nabla\bv) = \bff \quad\text{in $\Omega$},
\quad B(\nabla\bv\cdot\bn)|_\Gamma = 0.
\end{equation}
If we set
\begin{align*}
\bD_q(\Omega) &= \{\bv \in H^2_q(\Omega)^n \mid
B(\nabla\bv\cdot\bn) = 0 \quad\text{on $\Gamma$}\}, \\
\bA\bv &= R^{-1}\dv(B\nabla\bv) \quad\text{for $\bv \in \bD_q(\Omega)$},
\end{align*}
then Eq. \eqref{8.2} is written in the form:
\begin{equation}\label{8.3}
(\lambda-\bA)\bv = \bff.
\end{equation}
Let $\CS(\lambda)$ be the $\CR$-bounded solution operator given in 
Theorem \ref{thm:1.2}, then a unique solution of 
\eqref{8.3} is given by $\bv = \CS(\lambda)(R\bff, 0)$.  Therefore, by Theorem
\ref{thm:1.2} and \eqref{1.3}, we have
$$\sum_{k=0}^2|\lambda|^{k/2}\|\bv\|_{H^{2-k}_q(\Omega)}
\leq C_{m_1}r_b\|\bff\|_{L_q(\Omega)},
$$
for any $\lambda \in \Sigma_{\epsilon, \lambda_0}$ and 
$\bff \in L_q(\Omega)^n$. 
By the semi-group theory, the operator $\bA$ generates an $C_0$ analytic
semigroup $\{T(t)\}_{t\geq0}$ possessing the estimate:
\begin{gather*}
\|T(t)\bu_0\|_{L_q(\Omega)}+t\|\pd_tT(t)\bu_0\|_{L_q(\Omega)}
\leq Ce^{\gamma t}\|\bu_0\|_{L_q(\Omega)}, \\
\|\pd_tT(t)\bu_0\|_{L_q(\Omega)} \leq Ce^{\gamma t}\|\bu_0\|_{H^2_q(\Omega)},
\end{gather*} 
for any $t > 0$ with some constants $\gamma \in \BR$ and $C>0$. 
Using the real interpolation theorem (cf. Tanabe \cite[Subsec. 1.4]{Tanabe})
we can prove:
\begin{thm}\label{thm:semi} Let $1 < p, q < \infty$.  Assume that 
$\Omega$ is a uniformly $C^2$ domain.  Let 
$$\CD_{q,p}(\Omega) = (L_q(\Omega)^n, \bD_q(\Omega))_{1-1/p,p},$$
where $(\cdot, \cdot)_{1-1/p,p}$ is a real interpolation functor (\cite[Chapter 7]{Ad}). Then,
for any $\bu_0 \in \CD_{p,q}(\Omega)$, problem \eqref{8.1} admits
a unique solution $\bu$ with 
$$e^{-\gamma t}\bu \in H^1_p((0, \infty), L_q(\Omega)^n)
\cap L_p((0, \infty), H^2_q(\Omega)^n)$$
possessing the esitmate:
$$\|e^{-\gamma t}\bu\|_{L_p((0, \infty), H^2_q(\Omega))}
+ \|e^{-\gamma t}\pd_t\bu\|_{L_p((0, \infty), L_q(\Omega))}
\leq C\|\bu_0\|_{B^{(2-1/p)}_{q,p}(\Omega)}
$$
for any $\gamma > \lambda_0$ with some constant $C$ depending
on $\lambda_0$ that is the same as in Theorem \ref{thm:1.2}.
\end{thm}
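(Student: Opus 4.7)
My plan is to derive Theorem \ref{thm:semi} from two ingredients: the analytic semigroup $\{T(t)\}_{t\geq0}$ already produced in the paragraph just before the statement, and the classical trace characterization of the real interpolation space $(L_q(\Omega)^n, \bD_q(\Omega))_{1-1/p,p}$ in terms of that semigroup. Concretely, I would first set
\begin{equation*}
\bu(t) = e^{\gamma_1 t} T(t) e^{-\gamma_1 t} \bu_0 \quad\text{(modulo a shift to make $\bA-\gamma_1$ of negative type)},
\end{equation*}
and observe, using the semigroup property and the fact that $T(t)$ maps into $\bD_q(\Omega)$ for $t>0$, that $\bu$ already lies in $\bD_q(\Omega)$ for $t>0$ and satisfies $\pd_t\bu = \bA\bu$ pointwise, with initial datum $\bu_0$ in the strong sense on $\CD_{p,q}(\Omega)$. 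This takes care of existence, uniqueness, and the boundary condition $B(\nabla\bu\cdot\bn)=0$ automatically, since $\bD_q(\Omega)$ encodes it.

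Next, to obtain the quantitative estimate, I would invoke the standard trace theorem for analytic semigroups (see Tanabe, Chapter 1, or equivalently the real-interpolation identity of Butzer-Berens / Triebel), which says that for a sectorial operator $\bA$ on a UMD space one has
\begin{equation*}
\|\bu_0\|_{(L_q(\Omega)^n, D(\bA))_{1-1/p,p}} \sim \Bigl(\int_0^\infty \|\bA e^{-t\gamma_0}T(t)\bu_0\|_{L_q(\Omega)}^p\,dt\Bigr)^{1/p} + \|\bu_0\|_{L_q(\Omega)}
\end{equation*}
for $\gamma_0>\lambda_0$. Applied to $\bA = R^{-1}\mathrm{div}(B\nabla\cdot)$, this immediately yields the bound on $e^{-\gamma t}\pd_t\bu$ in $L_p((0,\infty),L_q(\Omega))$. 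To upgrade to the $H^2_q$ estimate, I would feed the equation $\pd_t\bu = R^{-1}\mathrm{div}(B\nabla\bu)$ into the resolvent bound \eqref{1.9} of Theorem \ref{thm:1.2} applied with $\lambda=0$ to $\bff = R\pd_t\bu$, or equivalently use that $\|\bv\|_{H^2_q(\Omega)} \leq C(\|(\lambda_0+\bA)\bv\|_{L_q(\Omega)} + \|\bv\|_{L_q(\Omega)})$ on $\bD_q(\Omega)$, giving $\|e^{-\gamma t}\bu\|_{L_p((0,\infty),H^2_q(\Omega))}$ controlled by the same right-hand side.

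Finally I would identify $\CD_{q,p}(\Omega)$ with $B^{2(1-1/p)}_{q,p}(\Omega)$ (possibly incorporating the compatibility condition $B(\nabla\bu_0\cdot\bn)=0$ when $2/p+1/q<1$), via the standard interpolation between $L_q$ and an elliptic domain with Neumann-type boundary condition; since the realization of $\bA$ on $\bD_q(\Omega)$ is the $L_q$-realization of a uniformly elliptic system with conormal boundary conditions, this identification follows from the usual trace/extension theorems on Besov spaces, independently of any Lopatinskii–Shapiro assumption because the boundary operator is of order one. The main technical obstacle is managing the exponential weight $e^{-\gamma t}$ uniformly for $\gamma>\lambda_0$: one has to shift the semigroup by $\gamma$ before invoking the trace characterization, verify that the shifted generator $\bA-\gamma$ is still sectorial with the same domain $\bD_q(\Omega)$ (which is immediate from the resolvent bound of Theorem \ref{thm:1.2}), and check that the equivalence of norms in the trace characterization is uniform in $\gamma \ge \gamma_0$, so that the constant $C$ in the final estimate depends only on $\lambda_0$.
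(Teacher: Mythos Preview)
Your approach is essentially the one the paper has in mind: the paper does not give a self-contained proof but simply says the argument ``follows the same lines as Theorem~3.9 in \cite{SS0}'' and, just before the statement, points to the real interpolation theorem in Tanabe \cite[Subsec.~1.4]{Tanabe}. That is exactly the trace-characterization-of-$(X,D(\bA))_{1-1/p,p}$ argument you outline, so substantively you are aligned with the paper.

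One small point: your displayed formula $\bu(t) = e^{\gamma_1 t} T(t) e^{-\gamma_1 t}\bu_0$ is not what you want, since $\bu_0$ is $t$-independent and the two exponentials cancel; the solution is simply $\bu(t)=T(t)\bu_0$, and the exponential weight enters only when you estimate $e^{-\gamma t}\bu$ via the shifted semigroup $e^{-\gamma t}T(t)$ generated by $\bA-\gamma$. Also, you do not need to invoke the $\CR$-bound \eqref{1.9} ``at $\lambda=0$'' to get the $H^2_q$ estimate: the graph-norm equivalence $\|\bv\|_{H^2_q}\leq C(\|\bA\bv\|_{L_q}+\|\bv\|_{L_q})$ on $\bD_q(\Omega)$ already follows from the resolvent bound at any fixed $\lambda\in\Sigma_{\epsilon,\lambda_0}$, which is how the paper (implicitly) and \cite{SS0} proceed. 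With these cosmetic fixes your sketch is correct.
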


\begin{proof} The proof of Theorem \ref{thm:semi} follows the same lines as the Theorem 3.9 in \cite{SS0}, so we skip it.
\end{proof}
\begin{remark} Note that $\bu_0 \in B^{2(1-1/p)}_{q,p}(\Omega)^n$ satisfies
the condition:
$$B(\nabla\bu_0\cdot\bn) = 0 \quad\text{on $\Gamma$}, $$
then $\bu_0 \in \CD_{q,p}(\Omega)$ when $2/p + 1/q < 1$. 
Moreover, when $2/p + 1/q > 1$, than any $\bu_0 \in B^{2(1-1/p)}_{q,p}(\Omega)$
belongs to $\CD_{q,p}(\Omega)^n$. 
\end{remark}

We now proceed the existence part of Theorem \ref{thm:1.1}. Let 
$\CS(\lambda) \in {\rm Hol}\,(\Sigma_{\epsilon, \lambda_0}, 
\CL(\CX_q(\Omega), H^2_q(\Omega)^n))$ be a solution operator of problem
\eqref{1.8} that exists due to  Theorem \ref{thm:1.2}.   
Let 
$$\bF \in L_p((0, T), L_q(\Omega)^n), 
\quad e^{-\gamma t}\bG \in L_p(\BR, H^1_q(\Omega)^n)
\cap H^{1/2}_p(\BR, L_q(\Omega)^n).
$$
for any $\gamma > \lambda_0$. Let  
$\bF_0$ be the zero extension of $\bF$ outside of $(0, T)$, that is
$\bF_0(\cdot, t) = \bF(\cdot, t)$ for $t \in (0, T)$ and $\bF_0(\cdot, t) = 0$
for $t \not\in (0, T)$. We consider the following time-dependent problem:
\begin{equation}\label{8.5}
R\pd_t\bv - \dv(B\nabla\bv) = \bF_0 \quad\text{in $\Omega\times\BR$},\quad
B(\nabla\bv\cdot\bn) = \bG \quad\text{on $\Gamma\times\BR$}.
\end{equation}
Let $\CL$ and $\CL^{-1}$ be the Laplace transform and the Laplace inverse
transform, that is
\begin{align*}
\CL[f](\lambda) &= \int^\infty_{-\infty}e^{-(\gamma +i\tau)t}
f(t)\,dt = \CF[e^{-\gamma t}f](\tau) \quad(\lambda=\gamma + i\tau),
\\
\CL^{-1}[g](t) &= \frac{1}{2\pi}\int^\infty_{-\infty}
e^{\gamma + i\tau}tg(\gamma + i\tau)\,d\tau
= e^{\gamma t}\CF^{-1}_\tau[g(\gamma + i\tau)](t).
\end{align*}
Applying Laplace transformation to  \eqref{8.5}, we have
$$
\lambda R\CL[\bv] - \dv(B\nabla\CL[\bv]) = 
\CL[\bF_0] \quad\text{in $\Omega$},\quad
B(\nabla\CL[\bv]\cdot\bn) = \CL[\bG] \quad\text{on $\Gamma$}.
$$
In view of Theorem \ref{thm:1.2}, we have
$$\CL[\bv] = \CS(\lambda)(\CL[\bF_0](\lambda), \lambda^{1/2}\CL[\bG](\lambda),
\CL[\bG](\lambda))$$
for $\gamma > \lambda_0$ with $\lambda = \gamma + i\tau \in \BC$. 
Thus, a solution $\bv$ of Eq. \eqref{8.5} is given by 
\begin{align*}
\bv &= \CL^{-1}[\CS(\lambda)(\CL[\bF_0](\lambda), 
\lambda^{1/2}\CL[\bG](\lambda), \CL[\bG](\lambda))](t) \\
&=e^{\gamma t}\CF_\tau^{-1}[\CS(\gamma + i\tau)\CF[e^{-\gamma t}(\bF_0,
\Lambda_\gamma^{1/2}\bG, \bG)](\tau)](t)
\end{align*}
for any $\gamma > \lambda_0$.  Here, $\Lambda^{1/2}_\gamma$ is the operator
defined by setting
$$\Lambda^{1/2}_\gamma g = \CL^{-1}[\lambda^{1/2}\CL[g](\lambda)].
$$
By the Cauchy theorem in theory of functions
of one complex variable, the value of $\bv$ is independent of choice of
$\gamma > \lambda_0$. By Theorem \ref{thm:1.2} and
Weis's operator valued Fourier multiplier theorem \cite{Weis}, 
we have
{\begin{align*}
&\|e^{-\gamma t}\bv\|_{L_p((\BR, H^2_q(\Omega))} + 
\|e^{-\gamma t}\pd_t\bv\|_{L_p(\BR, L_q(\Omega))}\\
&\quad \leq C(\|e^{-\gamma t}\bF_0\|_{L_p(\BR, L_q(\Omega))}
+ \|e^{-\gamma t}\Lambda_\gamma^{1/2}\bG\|_{L_p(\BR, L_q(\Omega))}
+ \|e^{-\gamma t}\bG\|_{L_p(\BR, H^1_q(\Omega))})
\end{align*}}
for any $\gamma > \lambda_0$ with some constant $C$ depending on $\lambda_0$. 
Since $|(\tau\pd_\tau)\lambda^{1/2}(1+\tau^2)^{-1/4}| \leq C(1+\gamma^{1/2}$
for any $\lambda = \gamma + i\tau \in \BC$ with $\gamma > \lambda_0$, 
by Proposition \ref{prop:4.1} we have
$$\|e^{-\gamma t}\Lambda^{1/2}_\gamma \bG\|_{L_p(\BR, L_q(\Omega))}
\leq C(1+\gamma^{1/2})\|e^{-\gamma t}\bG\|_{H^{1/2}_p(\BR, L_q(\Omega))}.
$$
Summing up, we have proved that $\bv$ satisfies Eq. \eqref{8.5}
and the estimate:
{\begin{align*}
&\|\bv\|_{L_p(((0, T), H^2_q(\Omega))} + 
\|\pd_t\bv\|_{L_p((0, T), L_q(\Omega))}\\
&\quad \leq Ce^{\gamma T}(\|\bF\|_{L_p((0, T), L_q(\Omega))}
+(1+\gamma^{1/2})
 \|e^{-\gamma t}\bG\|_{H^{1/2}_p(\BR, L_q(\Omega))}
+ \|e^{-\gamma t}\bG\|_{L_p(\BR, H^1_q(\Omega))})
\end{align*}}
for any $\gamma > \lambda_0$ with some constants $C$ depending 
on $\lambda_0$.

Next, to compensate for the lack of the  initial condition, we consider the following initial problem:
\begin{equation}\label{8.9}
R\pd_t\bw - \dv(B\nabla\bw) = 0 \quad\text{in $\Omega\times(0, \infty)$},
\quad B(\nabla\bw\cdot\bn)|_\Gamma=0, \quad
\bw|_{t=0} = \bu_0-\bv|_{t=0}.
\end{equation}
By \eqref{1.4}, we see that $\bu_0 - \bv|_{t=0} \in \CD_{q,p}(\Omega)$
when $2/p+ 1/q \not=1$, and so, by Theorem \ref{thm:semi}, 
problem \eqref{8.9} admits a unique solution $\bw$ with 
$$e^{-\gamma t}\bw \in H^1_p((0, \infty), L_q(\Omega)^n) \cap
L_p((0, \infty), H^2_q(\Omega)^n)$$
possessing the estimate:
$$\|e^{-\gamma t}\bw\|_{L_p((0, \infty), H^2_q(\Omega))}
+ \|e^{-\gamma t}\pd_t\bw\|_{L_p((0, \infty), L_q(\Omega))}
\leq C\|\bu_0-\bv|_{t=0}\|_{B^{2(1-1/p)}_{q,p}(\Omega))},
$$ 
for any $\gamma >\lambda_0$. 
Again, by the real interpolation theorem we have
{$$\|\bv|_{t=0}\|_{B^{2(1-1/p)}_{q,p}(\Omega)}
\leq C(\|e^{-\gamma t}\bv
\|_{L_p((0, \infty), H^2_q(\Omega))} 
+ \|e^{-\gamma t}\pd_t\bv\|_{L_p((0, \infty), L_q(\Omega))})
$$}
for some $\gamma > \lambda_0$, because $e^{-\gamma t}\bv|_{t=0}
= \bv|_{t=0}$.  

Summing up, we have proved that $\bu = \bv + \bw$ is a required 
solution of Eq. \eqref{1.1} or equivalently of \eqref{1.1*} 
possessing the estimate \eqref{1.6}. This completes the proof of
of the first part of  Theorem \ref{thm:1.1} devoted to the existence of a solution.

In order to prove the uniqueness of solutions of Eq. \eqref{1.1} we now consider
$\bu$ satisfying the regularity condition \eqref{1.5} and the homogeneous  system of equations \eqref{homo:1}.
Let $\bu_0$ be the zero extension of $\bu$ to $t < 0$, that is 
$\bu_0(\cdot, t) = \bu(\cdot, t)$ for $t \in (0, T)$ and $\bu_0(\cdot, t) = 0$
for $t < 0$.  We define $\bv$ by letting
$$\bv(\cdot, t) = \begin{cases} \bu_0(\cdot, t)&\quad\text{for $t < T$} \\
\bu_0(\cdot, 2T-t)&\quad\text{for $t \geq T$}.
\end{cases}
$$
Since $\bu|_{t=0} = 0$, we see that 
$$\bv \in H^1_p(\BR, L_q(\Omega)^n) \cap L_p(\BR, H^2_q(\Omega)^n), $$
that $\bv$ vanishes for $t \not\in (0, 2T)$, and that $\bv$ satisfies
the homogeneous equations:
\begin{equation}\label{homo:3}
R\pd_t\bv- \dv(B\nabla\bv) = 0\quad\text{in $\Omega\times\bR$}, \quad
B(\nabla\bv\cdot\bn)|_{\Gamma}=0.
\end{equation}
Applying the Laplace transform to \eqref{homo:3} yields that
$$
\lambda R\CL[\bv]- \dv(B\nabla\CL[\bv]) = 0\quad\text{in $\Omega$}, \quad
B(\nabla\CL[\bv]\cdot\bn)|_{\Gamma}=0.
$$
Since
\begin{align*}
\|\CL[\bv](\gamma + i\tau)\|_{H^2_q(\Omega)} & \leq \int^{2T}_0e^{\gamma t}
\|\bv(\cdot, t)\|_{H^2_q(\Omega)}\,dt
\leq e^{2\gamma T}(2T)^{1/{p'}}\|\bv\|_{L_p((0, 2T), H^2_q(\Omega))} \\
&\leq 2e^{\gamma T}(2T)^{1/{p'}}\|\bu\|_{L_p((0, T), H^2_q(\Omega))}
< \infty,
\end{align*}
the uniqueness stated in Theorem \ref{thm:1.2} yields that $\CL[\bv](\lambda) 
= 0$ for $\lambda \in \Sigma_{\epsilon, \lambda_0}$.  But, $\CL[\bv](\lambda)$
is holomorphic in $\BC$, because $\bv$ vanishes for $t \not\in (0, 2T)$.
Thus, $\CL[\bv]$ is identically zero, which yields that $\bv=0$.  Thus,
$\bu = 0$.  This completes the proof of uniqueness of solutions from Theorem
\ref{thm:1.1}. 
\hfill
\begin{minipage}[t]{1cm}
\flushright $\Box$
\end{minipage}

\section{{Proof of Theorem \ref{thm:1.1global}}} \label{sec:7}
We follow an argument from Section 3 of \cite{Shibata18}. 
First we prove the exponential stability
of semigroup $\{T(t)\}_{t\geq 0}$ associated with the problem 
\begin{equation}\label{eq:1.1}
R\pd_t\bu - \dv(B\nabla\bu) = 
0 \quad \text{in $\Omega\times(0, \infty)$}, \quad 
B(\nabla\bu\cdot\bn)|_\Gamma=0, \quad \bu|_{t=0} = \bu_0.
\end{equation}
For this purpose, we consider the resolvent problem:
\begin{equation}\label{e:1}
\lambda R\bv - \dv(B\nabla\bv) = \bff \quad\text{in $\Omega$}, 
\quad B(\nabla\bv\cdot\bn)|_\Gamma=0.
\end{equation}
Let us define:
\begin{align*}
\hat L_q(\Omega)^n & = \{\bff \in L_q(\Omega)^n \mid \int_\Omega\bff\,dx = 0\}, \\
\hat H^2_q(\Omega)^n & = \{\bv \in H^2_q(\Omega)^n \mid 
B(\nabla\bv\cdot\bn)|_\Gamma=0, \quad 
\int_\Omega R\bv\,dx = 0\}.
\end{align*}
By Theorem \ref{thm:1.2}, there exists a $\lambda_0 > 0$ such that 
for any $\lambda \in \Sigma_{\epsilon, \lambda_0}$ and $\bff \in 
L_q(\Omega)^n$, problem \eqref{e:1} admits a unique solution
$\bv \in H^2_q(\Omega)^n$ satisfying:
\begin{equation}\label{resol:1}
|\lambda|\|\bv\|_{L_q(\Omega)} + \|\bv\|_{H^2_q(\Omega)}
\leq C\|\bff\|_{L_q(\Omega)}
\end{equation}
for some constant $C > 0$. In addition, if $\bff \in \hat L_q(\Omega)^n$, then 
$\bv \in \hat H^2_q(\Omega)^n$ when $\lambda\not=0$.  In fact, integrating 
\eqref{e:1} and using the Gauss divergence theorem leads to
$$\lambda \int_\Omega R\bv\,dx = 0,
$$
which, combined with $\lambda\not=0$, yields that 
\begin{equation}\label{e:2}
\int_\Omega R\bv\,dx = 0.
\end{equation}
Let $\CB$ be an operator acting on $\bv \in \hat H^2_q(\Omega)^n$
defined by setting $\CB\bv = \dv(B\nabla\bv)$ for 
$\bv \in \hat H^2_q(\Omega)^n$. Then $(\lambda R- \CB)$ 
is a bijective map from $\hat H^2_q(\Omega)^n$
onto $\hat L_q(\Omega)^n$ when $\lambda \in \Sigma_{\epsilon, \lambda_0}$.
Since $\Omega$ is bounded, by the Rellich compactness theorem 
$(\lambda R-\CB)^{-1}$ is a compact operator 
from $L_q(\Omega)^n$ into itself.  Thus, by Riesz-Schauder theory,
especially Fredholm alternative principle, the injectiveness of
$\lambda R - \CB$ implies the bijectiveness. Let $\lambda \not\in(-\infty, 0)$
and let $\bv \in \hat H^2_q(\Omega)^n$ satisfy the homogeneous equations:
$$\lambda R\bv-\dv(B\nabla\bv) = 0\quad\text{in $\Omega$}, \quad 
B(\nabla\bv\cdot\bn)|_\Gamma=0.$$
Let $2 \leq q < \infty$, and then $\hat H^2_q(\Omega) \subset \hat H^2_2(\Omega)$.
Multiplying the above equation by $\overline{\bv}$, with
$\overline{\bv}$ being the complex conjugate of $\bv$,  integrating
the resulting formula over $\Omega$, and using
the Gauss divergence theorem leads to
\begin{equation} \label{7:1}
\lambda(R\bv, \overline{\bv})_\Omega + (B\nabla\bv, \overline{\nabla\bv})_\Omega=0, 
\end{equation}
where 
$$
(B\nabla\bv, \overline{\nabla\bv}):=\sum_{k,l=1}^n B_{kl}(\nabla v_l,\nabla \overline{v_k})_{\Omega}
=\sum_{j,k,l=1}^n(B_kl\de_{x_j}v_l,\de_{x_j}\overline{v_k})_{\Omega}
=\sum_{j=1}^n(B \de_{x_j}\bv,\overline{\de_{x_j}\bv})_{\Omega}.
$$
In particular, $(R\bv, \overline{\bv})_\Omega$ and $(B\nabla\bv, \overline{\nabla\bv})_\Omega$ 
are real numbers. Therefore, if ${\rm Im}\,\lambda \neq 0$, taking the imaginary part of \eqref{7:1} 
we have $(R\bv,\overline{\bv})_{\Omega}=0$ which yields that 
$\|\bv\|_{L_2(\Omega)}^2 = 0$. Thus, we have $\bv=0$, that is the 
uniqueness holds. In ${\rm Im}\,\lambda = 0$ then  ${\rm Re}\,\lambda \geq 0$ since 
$\lambda \not\in(-\infty, 0)$. Now in order to show uniqueness we take the real part of \eqref{7:1} 
which implies 
$$
m_1\|v\|_{L_2(\Omega)}^2+\|\nabla \bv\|_{L_2(\Omega)}^2 \leq 0.
$$
Thus, again, $\bv=0$. 
From these considerations, for $\lambda\not\in(-\infty,0)$, $(\lambda R- \CB)$
is a bijective map from $\hat H^2_q(\Omega)^n$ onto $\hat L_q(\Omega)^n$ 
provided $2 \leq q < \infty$.  In the case where $1 < q < 2$, the uniqueness
follows from the bijectiveness 
of the operator $\bar\lambda R-\CB$ for
$2 \leq q < \infty$, and so the operator $(\lambda R- \CB)$ is also a 
bijective map from $\hat H^2_q(\Omega)^n$ onto 
$\hat L_q(\Omega)^n$. 
From the standard argument
in the theory of $C_0$ analytic semigroups,
we see that for any $\epsilon \in (0, \pi/2)$ 
the resolvent estimate \eqref{resol:1} holds for any
$\lambda \in \Sigma_\epsilon\cup\{0\}$ 
with some uniform constant $C$
depending solely on $\epsilon$. From this it follows that 
there exists a $C_0$ analytic semigroup $\{T(t)\}_{t\geq 0}$ associated with
problem \eqref{eq:1.1} possessing the estimate:
\begin{equation}\label{exp:1}
\|T(t)\bu_0\|_{L_q(\Omega)} \leq Me^{-\delta t}\|\bu_0\|_{L_q(\Omega)},
\end{equation}
for any $t > 0$ and $\bu_0 \in \hat L_q(\Omega)^n$ with some positive constants
$M$ and $\delta$. 

We now prove Theorem \ref{thm:1.1global}.  
For this purpose, we first consider the shifted equations:
\begin{equation}\label{e:6}\begin{aligned} 
 R(\pd_t\bw + \eta \bw) -\dv B(\nabla\bw) &= \bF
&\quad&\text{in $\Omega\times(0, \infty)$}, \\
B(\nabla\bw\cdot\bn) &=\bG &\quad&\text{on $\Gamma \times(0, \infty)$}, \\
\bw|_{t=0} & = \bu_0 &\quad&\text{in $\Omega$}.
\end{aligned}\end{equation}
In view of Theorem \ref{thm:1.2}, there exist a large positive constant $\eta$ and a
positive constant $\gamma_0$ such that any solution 
$\bw$ of equations \eqref{e:6} satisfies the exponential decay
property:
\begin{equation}\label{e:8}
\|e^{\gamma t}\bw\|_{L_p((0, \infty), H^2_q(\Omega))} + \|e^{\gamma t}
\pd_t\bw\|_{L_p((0, \infty), L_q(\Omega))} 
\leq C\CF_\gamma
\end{equation}
for any $\gamma \leq \gamma_0$ with some positive constants $C > 0$ 
and $\gamma_0$, where we have set 
\begin{align*}
\CF_\gamma & = \|\bu_0\|_{B^{2(1-1/p)}_{q,p}(\Omega)} + \|e^{\gamma t}
\bF\|_{L_p((0, \infty), L_q(\Omega))}
+ \|e^{\gamma t}\bG\|_{L_p(\BR, H^1_q(\Omega))} 
+ (1+\gamma^{1/2})\|e^{\gamma t}\bG\|_{H^{1/2}_p(\BR, L_q(\Omega))}.
\end{align*}
In fact, $\Sigma_\epsilon + \eta = \{\lambda + \eta \mid 
\lambda
\in \Sigma_\epsilon\}
\subset \Sigma_{\epsilon, \lambda_0}$ for any large $\eta > 0$. Repeating the 
proof of Theorem \ref{thm:1.1} gives the assertion above. 

In particular, conditions \eqref{cond:1} and \eqref{cond:2} give that 
\begin{equation}\label{orth:1}
\int_\Omega R(x)\bw(x, t)\,dx = 0 \quad\text{for any $t > 0$}.
\end{equation}
In fact, integrating \eqref{e:6} over $\Omega$ and using the Gauss divergence
theorem implies that
$$\frac{d}{dt}\int_\Omega R\bw\,dx +\eta\int_{\Omega}R\bw\,dx = \int_\Omega \bF(x, t)\,dx + \int_\Gamma
\bG(x, t)\,d\sigma = 0
$$ for any $t > 0$ because of \eqref{cond:1}.  Integrating this formula
over $(0, t)$ and using \eqref{cond:1} give that
$$\int_\Omega R(x)\bw(x, t)\,dx = \int_\Omega R(x)\bu_0(x)\,dx = 0 \quad\text{for any $t > 0$}.
$$
We now consider the compensation equation:
$$
R\pd_t\bv - \dv(B\nabla\bv) = -\eta R\bw 
\quad\text{in $\Omega\times(0, \infty)$}, \quad
B(\nabla\bv\cdot\bn)|_\Gamma=0, \quad \bv|_{t=0}.
$$
Since $\bw(x, t) \in \hat L_q(\Omega)^n$ for any $t > 0$
as follows from \eqref{orth:1}, 
by the Duhamel principle, we have
$$\bv(\cdot, t) = -\eta\int^t_0T(t-s)(R\bw)(\cdot, s)\,ds.
$$
Choosing $\gamma_0$ smaller if necessary, we may assume that $\delta
> \gamma_0$, and so by \eqref{exp:1} 
\begin{align*}
\|e^{\gamma t}\bv(\cdot, t)\|_{L_q(\Omega)} & \leq 
M\int^t_0e^{-\delta(t-s)}e^{\gamma (t-s)}e^{\gamma s}
\|(R\bw)(\cdot, s)\|_{L_q(\Omega)}\,ds \\
& \leq 
M\int^t_0[e^{-(\delta-\gamma_0)(t-s)}]^{1/p'+1/p}e^{\gamma (t-s)}e^{\gamma s}
\|(R\bw)(\cdot, s)\|_{L_q(\Omega)}\,ds
\\
&\leq M\Bigl(\int^t_0 e^{-(\delta-\gamma_0)(t-s)}\,ds\Bigr)^{1/p'}
\Bigl(\int^t_0e^{-(\delta-\gamma_0)(t-s)}
(e^{\gamma s}\|(R\bw)(\cdot, s)\|_{L_q(\Omega))})^p\,ds
\Bigr)^{1/p},
\end{align*}
which, combined with \eqref{e:8},  yields that
\begin{equation}\label{e:9}
\|e^{\gamma t}\bv\|_{L_p((0, \infty), L_q(\Omega))} \leq C\CF_\gamma
\end{equation}
for any $\gamma \leq \gamma_0$. 

Since $\bv$ satisfies the shifted equations:
$$R(\pd_t\bv + \eta\bv) - \dv(B\nabla\bv) = 
-\eta R\bw + \eta R\bv
\quad\text{in $\Omega\times(0, \infty)$}, \quad
B(\nabla\bv\cdot\bn)|_\Gamma=0, \quad \bv|_{t=0}=0,
$$
we have, analogously to \eqref{e:8},
$$\|e^{\gamma t}\bv\|_{L_p((0, \infty), H^2_q(\Omega))}
+ \|e^{\gamma t}\pd_t\bv\|_{L_p((0, \infty), L_q(\Omega))}
\leq C\|e^{\gamma t}(\bw, \bv)\|_{L_p((0, \infty), L_q(\Omega))},
$$
which, combined with \eqref{e:9} and \eqref{e:8}, yields that
$$\|e^{\gamma t}(\bv + \bw)\|_{L_p((0, \infty), H^2_q(\Omega))}
+ \|e^{\gamma t}\pd_t(\bv + \bw)\|_{L_p((0, \infty), L_q(\Omega))}
\leq C_\gamma
$$
for any $\gamma \leq \gamma_0$.  Therefore,  $\bu = \bv + \bw$ is a 
required solution, which completes the proof of Theorem \ref{thm:1.1global}. 
\hfill
\begin{minipage}[t]{1cm}
\flushright $\Box$
\end{minipage}. 
\vskip3mm
{\bf Acknowledgement.} The authors would like to thank the anonymous referee for suggesting including the 
exponential decay result (Theorem 9).


\end{document}